\DeclareMathAlphabet{\mathcalligra}{T1}{calligra}{m}{n}
\DeclareMathAlphabet{\mathpzc}{OT1}{pzc}{m}{it}
\def\blfootnote{\gdef\@thefnmark{}\@footnotetext}
\renewcommand{\epsilon}{\varepsilon}
\newcommand{\light}{\mathcal{L}}
\newcommand{\heavy}{\mathcal{H}}
\def\LM{{\textnormal{L}}} 
\title{Partial recovery and weak consistency in the non-uniform Hypergraph Stochastic Block Model\blfootnote{Author names are listed in alphabetical order.}}
\author{Ioana Dumitriu\thanks{Department of Mathematics, University of California, San Diego, La Jolla, CA 92093. Email: \texttt{idumitriu@ucsd.edu}}
\qquad
Hai-Xiao Wang\thanks{Department of Mathematics, University of California, San Diego, La Jolla, CA 92093. Email: \texttt{h9wang@ucsd.edu}}
\qquad
Yizhe Zhu\thanks{Department of Mathematics, University of California, Irvine, Irvine, CA 92697. Email: \texttt{yizhe.zhu@uci.edu}}
}
\date{This version: June 2026}
\begin{document}

\maketitle

\begin{abstract}
We consider the community detection problem in sparse random hypergraphs under the non-uniform \emph{Hypergraph Stochastic Block Model} (HSBM), a  general model of random networks with community structure and  higher-order interactions. When the random hypergraph has bounded expected degrees, we provide a  spectral algorithm that outputs a partition with at least a $\gamma$ fraction of the vertices  classified correctly, where $\gamma\in (0.5,1)$ depends on the \emph{Signal-to-Noise Ratio} (SNR) of the model. When the SNR  grows slowly as the number of vertices goes to infinity, our algorithm achieves weak consistency, which improves the previous  results in \cite{Ghoshdastidar2017ConsistencyOS} for non-uniform HSBMs.

Our spectral algorithm consists of  three major steps: (1) Hyperedge selection: select hyperedges of certain sizes to provide the maximal signal-to-noise ratio for the induced sub-hypergraph; (2) Spectral partition: construct a regularized adjacency matrix and obtain an approximate partition based on singular vectors; (3) Correction and merging: incorporate the hyperedge information from adjacency tensors to upgrade the error rate guarantee. The theoretical analysis of our algorithm relies on the concentration and regularization of the adjacency matrix for sparse non-uniform random hypergraphs, which can be of independent interest.
\end{abstract}

\tableofcontents

\section{Introduction}
Clustering is one of the central problems in network analysis and machine learning \cite{Newman2002RandomGM, Shi2000NormalizedCA, Ng2002SpectralCA}.  Many clustering algorithms make use of graph models, which represent pairwise relationships among data.  A well-studied probabilistic model is the \emph{Stochastic Block Model} (SBM), which was first introduced in \cite{Holland1983StochasticBM} as a random graph model that generates community structure with given ground truth for clusters so that one can study algorithm accuracy. The past decades have brought many notable results in the analysis of different algorithms and fundamental limits for community detection in SBMs in different settings \cite{Coja-Oghlan2010GraphPV, Vu2018ASimple, Guedon2016CommunityDC,Montanari2016SemidefinitePO}.  A major breakthrough was the proof of phase transition behaviors of community detection algorithms in various connectivity regimes \cite{Massoulie2014CommunityDT, Bordenave2018NonbacktrackingSO, Mossel2015ReconstructionAE, Mossel2018ProofOT, Mossel2016ConsistencyTF, Abbe2016ExactRI, Abbe2018ProofOT}. See the survey \cite{Abbe2018CommunityDA} for more references.

Hypergraphs can represent more complex relationships among data \cite{Benson2016HigherOO,Battiston2020NetworksBP},  including recommendation systems \cite{Bu2010MusicRB,Li2013NewsRV}, computer vision \cite{Govindu2005TensorDF,Wen2019LearningNH}, and biological networks \cite{Michoel2012AlignmentAI,Tian2009HypergraphLA}, and they have been shown empirically to have advantages over graphs \cite{Zhou2007LearningWH}.  Besides community detection problems, sparse hypergraphs and their spectral theory have also found applications in data science \cite{Jain2014ProvableTF,Zhou2021SparseRT,Harris2021DeterministicTC}, combinatorics \cite{Dumitriu2021SpectraOR,Friedman1995SecondEO,Soma2019SpectralSO}, and statistical physics \cite{Caceres2021SparseSYK, Sen2018OptimizationOS}.  

With the motivation from a broad set of applications,  many efforts have been made in recent years to study community detection on random hypergraphs. The \emph{Hypergraph Stochastic Block Model} (HSBM), as a generalization of graph SBM, was first introduced and studied in \cite{Ghoshdastidar2014ConsistencyOS}. In this model, we observe a random uniform hypergraph where each hyperedge appears independently with some given probability depending on the community structure of the vertices in the hyperedge. 

Succinctly put, the HSBM recovery problem is to find the ground truth clusters either approximately or exactly, given a sample hypergraph and estimates of model parameters. We may ask the following questions about the quality of the solutions (see \cite{Abbe2018CommunityDA} for further details in the graph case).
\begin{enumerate}
	\item \textbf{Exact recovery (strong consistency):} With high probability, recover all the clusters correctly. (up to permutation).
	\item \textbf{Almost exact recovery (weak consistency):} With high probability, find a partition of the vertex set such that at most $o(N)$ vertices are misclassified.
	\item \textbf{Partial recovery:} Given a fixed $\gamma \in (0.5,1)$, with high probability, find a partition of the vertex set such that at least a fraction $\gamma$ of the vertices are clustered correctly.
	\item \textbf{Weak recovery (detection):} With high probability, find a partition correlated with the true partition, better than a random guess.
\end{enumerate}

A faithful representation of a hypergraph is its adjacency tensor. However, most of the computations involving tensors are NP-hard \cite{Hillar2013MostTP}. Instead,
they considered spectral algorithms for exact recovery using hypergraph Laplacians. Subsequently, the analysis was extended to sparse, non-uniform hypergraphs \cite{Ghoshdastidar2015ProvableGT, Ghoshdastidar2015SpectralCU, Ghoshdastidar2017ConsistencyOS}. 

For exact recovery of uniform HSBMs, it was shown that the phase transition occurs in the regime of logarithmic expected degrees in \cite{Lin2017FundamentalSL, Chien2018CommunityDI, Chien2019MinimaxMR}. The thresholds are given for binary \cite{Kim2018StochasticBM, Gaudio2023CommunityDI} and multiple \cite{Zhang2023ExactRI} community cases, by generalizing the techniques in \cite{Abbe2016ExactRI, Abbe2015CommunityDI, Abbe2020EntrywiseEA}. After our work appeared on arXiv, thresholds for exact recovery on non-uniform HSBMs were given by \cite{Dumitriu2023OptimalAE, Wang2023ITLimits}. Strong consistency on the degree-corrected non-uniform HSBM was studied in \cite{Deng2023StrongCO}.

In \cite{Ghoshdastidar2014ConsistencyOS}, the authors used spectral clustering based on the hypergraph's Laplacian to recover HSBMs that are dense and uniform. Subsequently, they extended their results to sparse, non-uniform hypergraphs  in \cite{Ghoshdastidar2015ProvableGT,Ghoshdastidar2015SpectralCU,Ghoshdastidar2017ConsistencyOS}.

Spectral methods were considered in \cite{Chien2018CommunityDI, Ahn2016CommunityRI, Cole2020ExactRI, Zhang2023ExactRI, Yuan2021CommunityDI, Gaudio2023CommunityDI}, while \emph{semidefinite programming} (SDP) methods were analyzed in \cite{Kim2018StochasticBM,Lee2020RobustHC, Alaluusua2023MultilayerHC}.  Weak consistency for HSBMs was studied in \cite{Chien2018CommunityDI, Chien2019MinimaxMR, Ghoshdastidar2017ConsistencyOS, Ghoshdastidar2017UniformHP, Ke2020CommunityDF}.
 
For detection of the HSBM, the authors of \cite{Angelini2015SpectralDO} proposed a conjecture that the phase transition occurs in the regime of constant expected degrees. The positive part of the conjecture for the binary and multi-block case was solved in \cite{Pal2021ComunityDI} and \cite{Stephan2022SparseRH}, respectively. Their algorithms can output a partition better than a random guess when above the \emph{Kesten-Stigum} threshold, but can not guarantee the correctness ratio. \cite{Gu2023WeakRT, Gu2024CommunityDI} proved that detection is impossible and the \emph{Kesten-Stigum} threshold is tight for $\ell$-uniform hypergraphs with binary communities when $\ell =3, 4$, while KS threshold is not tight when $\ell \geq 7$, and some regimes remain unknown.

\subsection{Non-uniform hypergraph stochastic block model}
The non-uniform HSBM  was first studied in \cite{Ghoshdastidar2017ConsistencyOS}, which removed the uniform hypergraph assumption in previous works, and it is a more realistic model to study higher-order interaction on networks \cite{Lung2018HypergraphM,Wen2019LearningNH}. It can be seen as a superposition of several uniform HSBMs with different model parameters.  We first define the uniform HSBM in our setting and extend it to non-uniform hypergraphs. 

\begin{definition}[Uniform HSBM]
Let $\gV = [N]$ be the vertex set and $\ell \in \N^{+}$ be some fixed integer. Let $\gV_{1}, \ldots, \gV_{K}$ be a partition of $\gV$ into $K$ disjoint communities, each of size $N/K $ (assuming $N$ is divisible by $K$). For any set of $\ell$ distinct vertices $i_1,\dots i_{\ell}$, a hyperedge $\{i_1,\dots i_{\ell}\}$ is generated with probability $a_{\ell}/\binom{N - 1}{\ell-1} $ if all the vertices $i_1,\dots i_{\ell}$ are from the same community; otherwise with probability $b_{\ell} / \binom{N - 1}{\ell-1}$. We denote this distribution on the set of $\ell$-uniform hypergraphs as 
\begin{equation}
 \gH_{\ell}\sim \textnormal{HSBM}_{\ell} \Bigg( N, K, \frac{a_{\ell}}{\binom{N - 1}{\ell-1}}, \frac{b_{\ell}}{\binom{N - 1}{\ell-1}} \Bigg)\,.\label{eqn:uniform_HSBM_two_parameters}
\end{equation}
\end{definition}

\begin{definition}[Non-uniform HSBM]\label{def:sym_non_uni_HSBM}
Let $\sL = \{\ell: \ell \geq 2, \ell \in \N\}$ be a set of integers with finite cardinality. 
Let $\gH = (\gV, \gE)$ be a non-uniform random hypergraph, which can be
considered as a collection of $\ell$-uniform hypergraphs, i.e., $\gH = \cup_{\ell \in \sL} \gH_{\ell}$ with each $\gH_{\ell}$ sampled from \eqref{eqn:uniform_HSBM_two_parameters}.
\end{definition}
Examples of $2$-uniform and $3$-uniform \textnormal{HSBM}, and an example of non-uniform \textnormal{HSBM} with $\sL = \{2, 3\}$ and $K =3$ is displayed in \Cref{fig:non_uniform_partial} (a), \Cref{fig:non_uniform_partial} (b) and \Cref{fig:non_uniform_partial} (c) respectively.
\begin{figure}
     \centering
     \begin{subfigure}{0.3\textwidth}
         \centering
        \includegraphics[width=\textwidth]{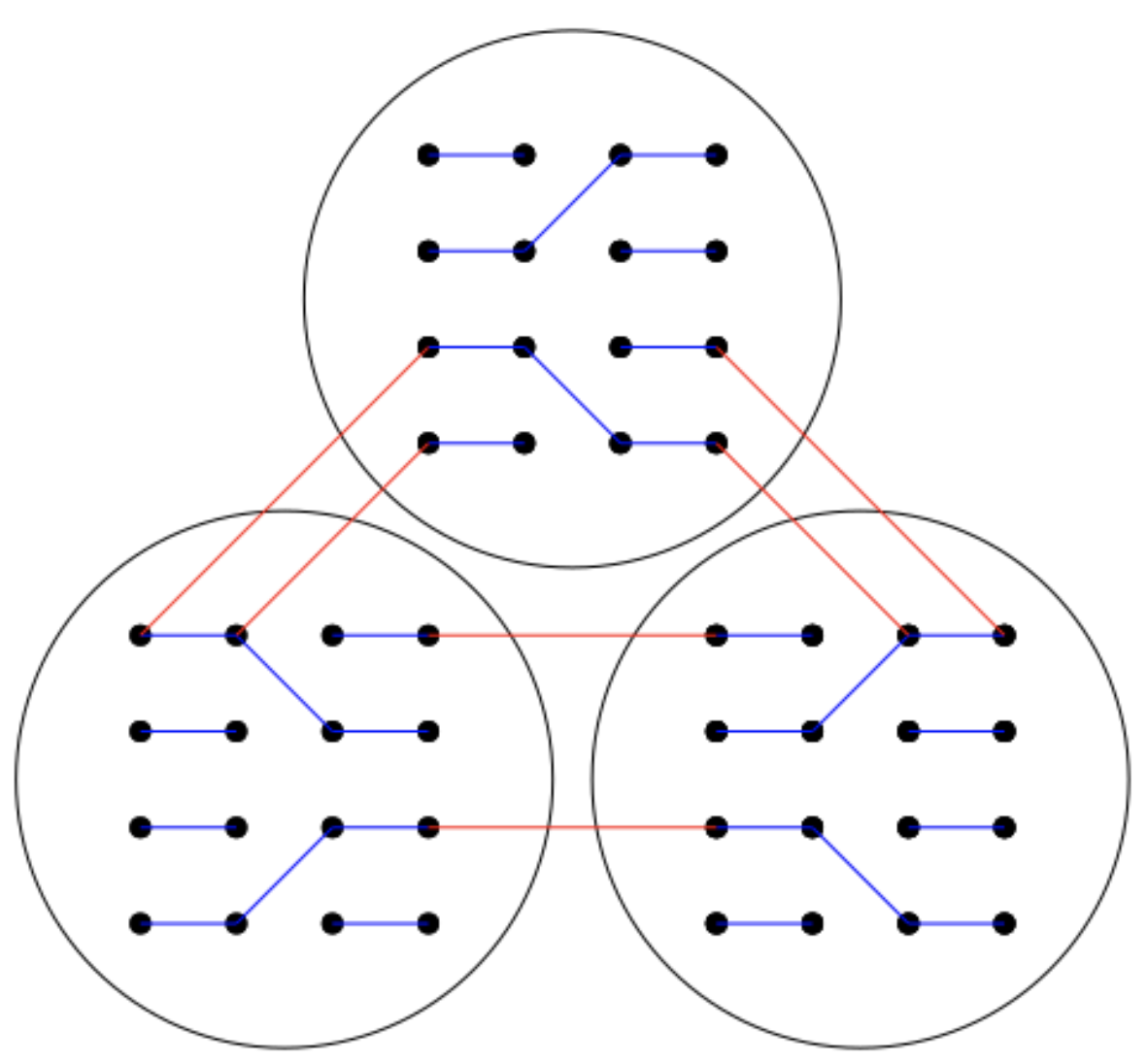}
        \subcaption{2-uniform HSBM}
     \end{subfigure}
     \begin{subfigure}{0.3\textwidth}
         \centering
        \includegraphics[width=\textwidth]{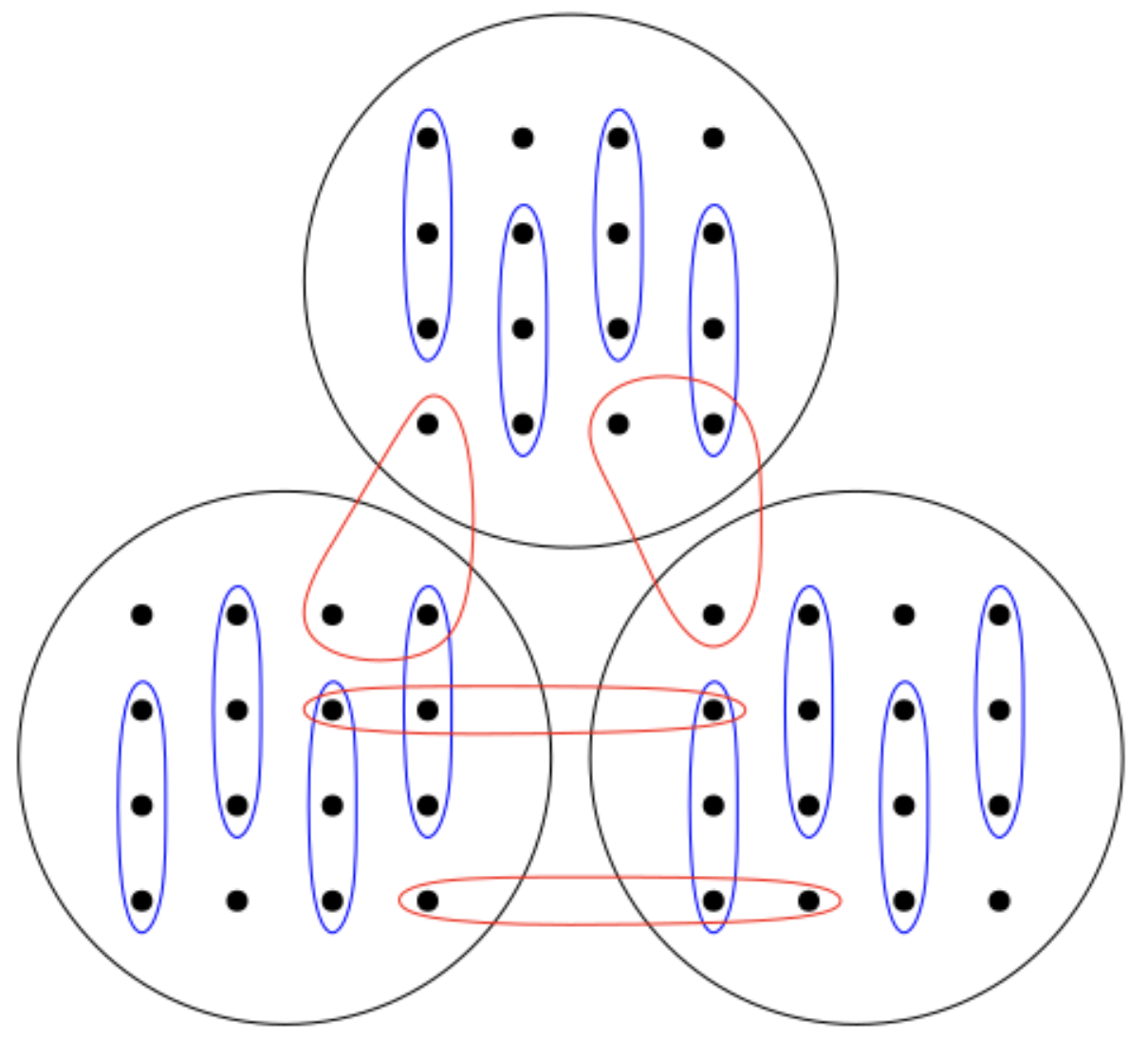}
        \subcaption{3-uniform HSBM}
     \end{subfigure}
     \begin{subfigure}{0.3\textwidth}
         \centering
        \includegraphics[width=\textwidth]{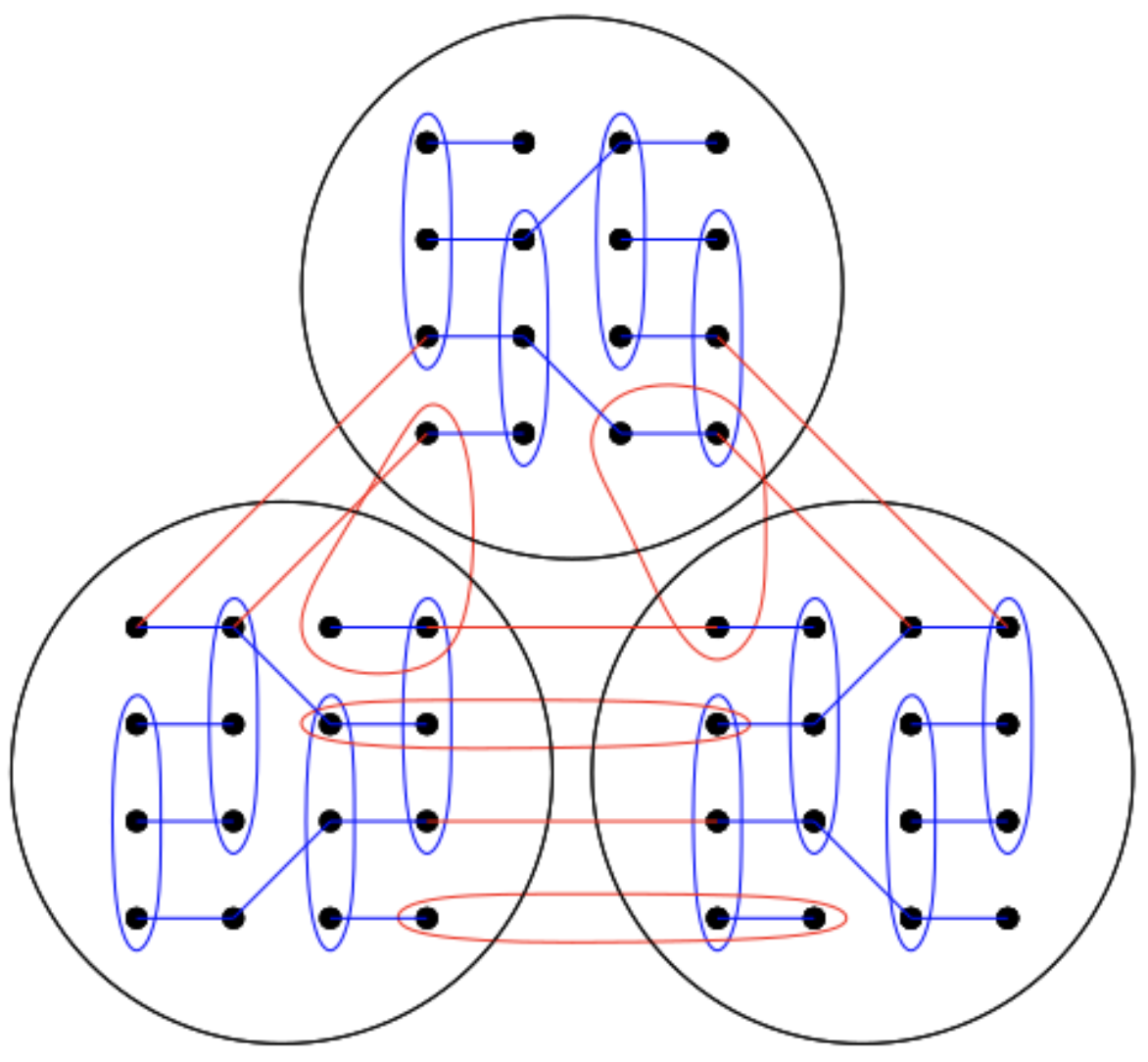}
        \subcaption{Non-uniform HSBM.}
     \end{subfigure}
     \caption{Uniform \textnormal{HSBM} and non-uniform \textnormal{HSBM}.}
     \label{fig:non_uniform_partial}
\end{figure}

\subsection{Main results}
To illustrate our main results, we first introduce the concepts \emph{$\gamma$-correctness} and \emph{Signal-to-Noise Ratio} (SNR) to measure the accuracy of the obtained partitions.
\begin{definition}[$\gamma$-correctness]
Let $\gV_1, \ldots, \gV_{K}$ be a partition of the vertex set $\gV$. A collection of subsets $\widehat{\gV}_{1},\ldots, \widehat{\gV}_{K}$ of $\gV$ is $\gamma$-correct if $|\gV_{k}\cap \widehat{\gV}_{k}|\geq \gamma |\gV_{k}|$ for all $k \in [K]$.
\end{definition}

\begin{definition}
    For model \ref{def:sym_non_uni_HSBM} under \Cref{ass:regimes_partial}, we define the \emph{signal-to-noise ratio}  as 
\begin{align}
    \mathrm{SNR}_{\sL}(K) \coloneqq\frac{ \Big[\sum_{\ell \in \sL} (\ell - 1)(a_{\ell} - b_{\ell})K^{-\ell + 1} \Big]^2 }{\sum_{\ell \in \sL} (\ell - 1)\big[(a_{\ell} - b_{\ell})K^{-\ell + 1} + b_{\ell} \big]} \,. \label{eqn:SNRk}
\end{align}
Let $\LM $ denote the maximum element in the set $\sL$. The following constant $\const_{\sL}(K, \nu)$ is used to characterize the accuracy of the clustering result,
\begin{align}
    \const_{\sL}(K, \nu)\coloneqq\frac{[\nu^{\LM -1} - (1-\nu)^{\LM -1}]^2 }{2^{3}\cdot (\LM  - 1)^2} \cdot \bigg( \indi{K = 2} +  \frac{1}{4^{\LM }} \cdot \indi{K \geq 3} \bigg)\label{eqn:CMk}
\end{align}
\end{definition}

Note that a non-uniform HSBM can be seen as a collection of observations for the same underlying community structure through several uniform HSBMs of different orders. A possible issue is that some uniform hypergraphs with small SNR might not be informative (if we observe an $\ell$-uniform hypergraph with parameters $a_{\ell}=b_{\ell}$,  including hyperedge information from it ultimately increases the noise). To improve our error rate guarantees, we start by adding a pre-processing step (\Cref{alg:parameter_preprocessing}) for hyperedge selection according to SNR and then apply the algorithm on the sub-hypergraph with maximal SNR.

We state the following assumption that will be used in our analysis of Algorithms~\ref{alg:binary_partition_partial} ($K =2$) and \ref{alg:multiple_partition_partial} ($K \geq 3$).

\begin{assumption}\label{ass:regimes_partial}
For each $\ell \in \sL$, assume $a_{\ell}, b_{\ell}$ are constants independent of $N$, and $a_\ell \geq b_{\ell}$. Let $\LM $ denote the maximum element of the set $\sL$. Given $\nu\in (1/K, 1)$, assume that there exists a universal constant $\const_{\eqref{eqn:degree_constant}}$ and some $\nu$-dependent constant $\const_{\nu} > 0$, such that 
\begin{subequations}
\begin{align}
    d\coloneqq \sum_{\ell \in \sL} (\ell - 1)a_{\ell} &\geq \const_{\eqref{eqn:degree_constant}}\,,\label{eqn:degree_constant}\\
    \sum_{\ell \in \sL} (\ell - 1)(a_{\ell}-b_{\ell}) &\geq \const_{\nu} \sqrt{d}\cdot K^{\LM -1} \cdot \Big( 2^{3} \cdot \indi{K = 2} + \sqrt{\log\big(K/(1 -\nu)\big)} \cdot \indi{K \geq 3} \Big)\,.\label{eqn:SNR_condition} 
  \end{align}
\end{subequations}
\end{assumption}

The constant $\const_{\eqref{eqn:degree_constant}}$ is not necessarily large; for example, $\const_{\eqref{eqn:degree_constant}} = (2^{1/\LM } - 1)^{-1/3}$ should suffice, but smaller $\const_{\eqref{eqn:degree_constant}}$ may work as well. Both two inequalities above prevent the hypergraph from being too sparse, while \eqref{eqn:SNR_condition} also requires that the difference between \emph{in-block} and \emph{across-blocks} densities is large enough. The choices of $\const_{\eqref{eqn:degree_constant}}$, $\const_{\nu}$ and their relationship will be discussed in \Cref{rem:two_constants}.

\subsubsection{The binary-community case}

We start with \Cref{alg:binary_partition_partial}, which outputs a $\gamma$-correct partition when the non-uniform HSBM $\gH$ is sampled from model \ref{def:sym_non_uni_HSBM} with only $2$ communities. Inspired by the innovative graph algorithm in \cite{Chin2015StochasticBM}, we generalize it to non-uniform hypergraphs while we provide a complete and detailed analysis at the same time.

\begin{algorithm}[h]
\caption{\textbf{Binary Partition}}\label{alg:binary_partition_partial}

    \KwData{The adjacency tensors $\{\tA^{(\ell)}\}_{\ell \in \sL}$, the number of communities $K$ and $\{a_{\ell}\}_{\ell \in \sL}$, $\{b_{\ell}\}_{\ell \in \sL}$.}

    {Run \textbf{\Cref{alg:parameter_preprocessing} Pre-processing} to obtain subset $\gS$ which achieves maximal $\mathrm{SNR}$.}\,
    
    {Randomly color all hyperedges red or blue with equal probability.}\,

    {Run \textbf{\Cref{alg:binary_block_spectral_partition} Spectral Partition} on the red hypergraph and output $\widehat{\gV}_{1}^{(0)}, \widehat{\gV}_{2}^{(0)}$.}\,\\
    {Run \textbf{\Cref{alg:binary_block_correction} Correction} on the blue hypergraph and output $\widehat{\gV}_{1}, \widehat{\gV}_2$.}\,
        
    \KwResult{The estimated partition $\widehat{\gV}_{1}, \widehat{\gV}_2$}
\end{algorithm}

\begin{theorem}[$K =2$]\label{thm:partial_binary}
Let $\nu\in(0.5, 1)$ and $\rho = 2\exp( -\const_{\sL}(2, \nu) \cdot \mathrm{SNR}_{\sL}(2))$ with $\mathrm{SNR}_{\sL}(K)$, $\const_{\sL}(K, \nu)$ defined in \eqref{eqn:SNRk},\eqref{eqn:CMk}, and let $\gamma = \max\{\nu,\, 1 - 2\rho\}$. Then under \Cref{ass:regimes_partial}, \Cref{alg:binary_partition_partial} outputs a $\gamma$-correct partition for sufficiently large $N$ with probability at least $1 - O(N^{-2})$.
\end{theorem}

\subsubsection{The multi-community case}
For the multi-community case ($K \geq 3$), another algorithm with more subroutines is developed in \Cref{alg:multiple_partition_partial}, which outputs a $\gamma$-correct partition with high probability. We state the result as follows.

\begin{algorithm}[h]
\caption{\textbf{General Partition}}\label{alg:multiple_partition_partial}

    \KwData{The adjacency tensors $\{\tA^{(\ell)}\}_{\ell \in \sL}$, the number of communities $K$ and $\{a_{\ell}\}_{\ell \in \sL}$, $\{b_{\ell}\}_{\ell \in \sL}$.}

    {Run \textbf{\Cref{alg:parameter_preprocessing} Pre-processing} to obtain subset $\gS$ which achieves maximal $\mathrm{SNR}$.}\,
    
    {Randomly color all hyperedges red or blue with equal probability.}\,

    {Randomly partition the vertex $\gV$ into two disjoint subsets $\gY$ and $\gZ$ by assigning $+1$ or $-1$ to each vertex with equal probability.}\,

    {Let $\rmB$ denote the adjacency matrix of the bipartite hypergraph between $\gY$ and $\gZ$ consisting only of the red hyperedges, with rows indexed by $\gZ$ and columns indexed by $\gY$.}\,
    
    {Run \textbf{\Cref{alg:multi_block_spectral_partition} Spectral Partition} on the red hypergraph and output $\widehat{\gU}_{1}^{(0)}, \cdots, \widehat{\gU}_{K}^{(0)}$.  \newline \textit{This step only uses the red hyperedges between vertices in $\gY$ and $\gZ$ and outputs approximate clusters for $\gU_{k} \coloneqq \gV_{k}\cap \gZ$ for each $k=1,\ldots, K$}}
    
    {Run \textbf{\Cref{alg:multi_block_correction} Correction} on the red hypergraph and output $\widehat{\gU}_{1}, \ldots, \widehat{\gU}_{K}$.}
    
    {Run \textbf{\Cref{alg:multi_block_merging} Merging} on the blue hypergraph and output $\widehat{\gV}_{1}, \ldots, \widehat{\gV}_{K}$. \newline\textit{This step only uses the blue hyperedges  between vertices in $\gY$ and $\gZ$ and assigns the vertices in $\gY$ to an appropriate approximate cluster.}}

    \KwResult{The estimated partition $\widehat{\gV}_{1}, \cdots, \widehat{\gV}_{K}$.}
\end{algorithm}

\begin{theorem}[$K \geq 3$]\label{thm:partial_multiple}
Let $\nu\in (1/K, 1)$ and $\rho = \exp( -\const_{\sL}(K, \nu) \cdot \mathrm{SNR}_{\sL}(K) \,)$ with $\mathrm{SNR}_{\sL}(K)$, $\const_{\sL}(K, \nu)$ defined in \eqref{eqn:SNRk},\eqref{eqn:CMk}, and let $\gamma = \max\{\nu,\, 1 - K\rho\}$. Then under \Cref{ass:regimes_partial}, \Cref{alg:multiple_partition_partial} outputs a $\gamma$-correct partition for sufficiently large $N$ with probability at least $1 - O(N^{-2})$.
\end{theorem}

The time complexities of Algorithms \ref{alg:binary_partition_partial} and \ref{alg:multiple_partition_partial} are $O(N^{3})$, with the bulk of time spent in Stage 1 by the spectral method.

To the best of our knowledge, Theorems~\ref{thm:partial_binary} and ~\ref{thm:partial_multiple} are the first results for partial recovery of non-uniform HSBMs.
When the number of blocks is $2$, \Cref{alg:binary_partition_partial} guarantees a better error rate for partial recovery as in \Cref{thm:partial_binary}. This happens because \Cref{alg:binary_partition_partial} does not need the merging routine in \Cref{alg:multiple_partition_partial}: if one of the communities is obtained, then the other one is also obtained via the complement.

\begin{remark}
Taking $\sL = \{2 \}$, \Cref{thm:partial_multiple} can  be reduced to  \cite[Lemma 9]{Chin2015StochasticBM} for the graph case. The failure probability $O(N^{-2})$ can be decreased to $O(N^{-p})$ for any $p>0$, as long as one is willing to pay the price by increasing the constants $\const_{\eqref{eqn:degree_constant}}$, $\const_v$ in \eqref{eqn:degree_constant}, \eqref{eqn:SNR_condition} respectively.
\end{remark}

Our Algorithms \ref{alg:binary_partition_partial} and \ref{alg:multiple_partition_partial} can be summarized in 3 steps:
\begin{enumerate}
    \item \textbf{Hyperedge selection}: select hyperedges of certain sizes to provide the maximal signal-to-noise ratio (SNR) for the induced sub-hypergraph.
    \item \textbf{Spectral partition}: construct a regularized adjacency matrix and obtain an approximate partition based on singular vectors (first approximation). 
    \item \textbf{Correction and merging}: incorporate the hyperedge information from adjacency tensors to upgrade the error rate guarantee (second, better approximation). 
\end{enumerate}
The algorithm requires the input of model parameters $a_{\ell},b_{\ell}$, which can be estimated by counting cycles in hypergraphs as shown in \cite{Mossel2015ReconstructionAE, Yuan2022TestingCS}. Estimation of the number of blocks can be done by counting the outliers in the spectrum of the non-backtracking operator, e.g., as shown (for different regimes and different parameters) in \cite{Saade2014SpectralCO, Angelini2015SpectralDO, Le2022EstimatingTN, Stephan2022SparseRH}. 

\subsubsection{Weak consistency}
Throughout the proofs for \Cref{thm:partial_binary} and \Cref{thm:partial_multiple}, we make only one assumption on the growth or finiteness of $d$ and $\mathrm{SNR}_{\sL}(K)$, and it happens in estimating the failure probability as noted in \Cref{rem:SNRlogn}. Consequently, the corollary below follows, which covers the case when $d$ and $\mathrm{SNR}_{\sL}(K)$ grow with $N$.

\begin{corollary}[Weak consistency]\label{cor:weak_consistency}
Let $\sL$ be a set of finite integers and $K$ be some fixed integer. If $\mathrm{SNR}_{\sL}(K)$ defined in $\eqref{eqn:SNRk}$ goes to infinity as $N \to \infty$ and $\mathrm{SNR}_{\sL}(K) = o(\log(N))$, then with probability $1- O(N^{-2})$, Algorithms \ref{alg:binary_partition_partial} and \ref{alg:multiple_partition_partial} output a partition with only $o(N)$  misclassified vertices.
 \end{corollary}
The paper 
\cite{Ghoshdastidar2017ConsistencyOS} also proves weak consistency for non-uniform HSBMs, but in a much denser regime than we do here ($d = \Omega(\log^2(N))$, instead of $d=\omega(1)$ , as in \Cref{cor:weak_consistency}). In fact, we now know  that strong consistency should be achievable in this denser regime, as \cite{Dumitriu2023OptimalAE} shows. When restricting to the uniform HSBM case, \Cref{cor:weak_consistency} achieves weak consistency under the same sparsity condition as in \cite{Ahn2018HypergraphSC}.

\begin{remark}\label{rem:SNRlogn}
To be precise, Algorithms \ref{alg:binary_partition_partial} and \ref{alg:multiple_partition_partial} work optimally in the $\mathrm{SNR}_{\sL} = o (\log(N))$ regime. When $\mathrm{SNR}_{\sL}(K) = \Omega(\log(N))$, it implies that $\rho = N^{-\Omega(1)}$, and one may have $e^{-N\rho} = \Omega(1)$ in \eqref{eqn:wrong_prob}, which may not decrease to $0$ as $N \to\infty$. Therefore the theoretical guarantees of Algorithms \ref{alg:multi_block_correction} and \ref{alg:multi_block_merging} may not remain valid. This, however, should not matter: in the regime when $\mathrm{SNR}_{\sL}(K) = \Omega(\log(N))$, strong (rather than weak) consistency is expected, as per  \cite{Dumitriu2023OptimalAE}. Therefore, the regime of interest for weak consistency is $\mathrm{SNR}_{\sL} = o(\log(N))$.

\end{remark}

\subsection{Comparison with existing results} Although many algorithms and theoretical results have been developed for hypergraph community detection, most of them are restricted to uniform hypergraphs, and few results are known for non-uniform ones. We will discuss the most relevant results. 



In \cite{Ke2020CommunityDF}, the authors studied the degree-corrected HSBM with general connection probability parameters by using a tensor power iteration algorithm and Tucker decomposition. Their algorithm achieves weak consistency for uniform hypergraphs when the average degree is $\omega(\log^2(N))$, which is the regime complementary to the regime we studied here. They discussed a way to generalize the algorithm to non-uniform hypergraphs, but the theoretical analysis remains open.  The recent paper \cite{Zhen2021CommunityDI} analyzed non-uniform hypergraph community detection by using hypergraph embedding and optimization algorithms and obtained weak consistency when the expected degrees are of $\omega(\log(N))$, again a complementary regime to ours. Results on spectral norm concentration of sparse random tensors were obtained in \cite{Cooper2020AdjacencySO, Nguyen2015TensorSV, Jain2014ProvableTF, Lei2020ConsistentCD, Zhou2021SparseRT}, but no provable tensor algorithm in the bounded expected degree case is known. Testing for the community structure for non-uniform hypergraphs was studied in \cite{Yuan2022TestingCS, Jin2021SharpIR}, which is a problem different from community detection.

In our approach, we relied on knowing the tensors for each uniform hypergraph. However, in computations, we only ran the spectral algorithm on the adjacency matrix of the entire hypergraph since the stability of tensor algorithms does not yet come with guarantees due to the lack of concentration, and for non-uniform hypergraphs, $\ell - 1$ adjacency tensors would be needed. This approach presented the challenge that, unlike for graphs, the adjacency matrix of a random non-uniform hypergraph has dependent entries, and the concentration properties of such a random matrix were previously unknown. We overcame this issue and proved concentration bounds from scratch down to the bounded degree regime. Similar to \cite{Feige2005SpectralTA, Le2017ConcentrationAR}, we provided here a regularization analysis by removing rows in the adjacency matrix with large row sums (suggestive of large degree vertices) and proving a concentration result for the regularized matrix down to the bounded expected degree regime (see \Cref{thm:regularization_concentration_partial}).


In terms of partial recovery for hypergraphs, our results are new, even in the uniform case. In \cite[Theorem 1]{Ahn2018HypergraphSC}, for uniform hypergraphs, the authors showed detection (not partial recovery) is possible when the average degree is $\Omega(1)$; in addition, the error rate is not exponential in the model parameters, but only polynomial.  Here, we mention two more results for the graph case. In the arbitrarily slowly growing degrees regime, it was shown in \cite{Zhang2016MinimaxRO,Fei2020AchievingTB} that the error rate in \eqref{eqn:SNRk} is optimal up to a constant in the exponent. In the bounded expected degrees regime, the authors in \cite{Mossel2016BeliefRR, Chin2021OptimalRO} provided algorithms that can asymptotically recover the optimal fraction of vertices, when the signal-to-noise ratio is large enough. It's an open problem to extend their analysis to obtain a minimax error rate for hypergraphs. 

In \cite{Ghoshdastidar2017ConsistencyOS}, the authors considered weak consistency in a non-uniform HSBM model with a spectral algorithm based on the hypergraph Laplacian matrix, and  showed that  weak consistency is achievable if the expected degree is of $\Omega(\log^2(N))$  with high probability \cite[Theorem 4.2]{Ghoshdastidar2014ConsistencyOS}. Their algorithm can't be applied to sparse regimes straightforwardly since the normalized Laplacian is not well-defined due to the existence of isolated vertices in the bounded degree case. In addition, our weak consistency results obtained here are valid as long as the expected degree is $\omega(1)$ and $o(\log(N))$, which is the entire set of problems on which weak consistency is expected. By contrast, in \cite{Ghoshdastidar2017ConsistencyOS}, weak consistency is shown only when the minimum expected degree is $\Omega(\log^2(N))$, which is a regime complementary to ours and where exact recovery should (in principle) be possible: for example, this is known to be an exact recovery regime in the uniform case \cite{Chien2019MinimaxMR,Kim2018StochasticBM,Lee2020RobustHC,Zhang2023ExactRI}.

Finally, although our analysis does not provide a way to identify the exact threshold for the phase transition of detection in the non-uniform hypergraph, we conjecture that (similar to \cite{Angelini2015SpectralDO} in the uniform case) $\mathrm{SNR}_{\sL}(K) =1$ in \eqref{eqn:SNRk} is the exact threshold for detection.

In subsequent works \cite{Dumitriu2023OptimalAE, Wang2023ITLimits} we proposed algorithms to achieve weak consistency. However, their methods can not cover the regime when the expected degree is $\Omega(1)$ due to the lack of concentration. Additionally, \cite{Wang2023ProjectedTP} proposed \emph{Projected Tensor Power Method} as the refinement stage to achieve strong consistency, as long as the first stage partition is partially correct, as ours.

\subsection{Organization of the paper}
In \Cref{sec:partial_prelim}, we include the definitions of adjacency matrices of hypergraphs. The concentration results for the adjacency matrices are provided in \Cref{sec:concentration_partial}. The algorithms for partial recovery are presented in Section \ref{sec:algorithm_blocks}. The proof for the correctness of our algorithms for Theorem \ref{thm:partial_multiple} and \Cref{cor:weak_consistency} are given in Section \ref{sec:analysis_multiple}. The proof of Theorem \ref{thm:partial_binary}, as well as the proofs of many auxiliary lemmas and useful lemmas in the literature, are provided in the supplemental materials.


\section{Preliminaries}\label{sec:partial_prelim}

\begin{definition}[Adjacency tensor]
An $\ell$-uniform hypergraph $\gH_{\ell}=(\gV, \gE_{\ell})$ can be associated to an order-$\ell$ adjacency tensor $\tA^{(\ell)}$. For each $\ell$-hyperedge $e = \{ i_1, \dots, i_{\ell} \}$, define the corresponding entry by
\begin{equation}
    \etA_{e}^{(\ell)} \coloneqq \etA_{[i_1,\dots, i_{\ell}]}^{(\ell)} = \indi{e \in \gE_{\ell}}\,. 
\end{equation}
\end{definition}

\begin{definition}[Adjacency matrix]
For some $\ell \in \N^{+}$, let $\tA^{(\ell)}$ denote the adjacency tensor associated to the uniform \emph{HSBM} $\gH_{\ell} = (\gV, \gE_{\ell})$ sampled from model \eqref{eqn:uniform_HSBM_two_parameters}. The adjacency matrix $\rmA^{(\ell)}\coloneqq [\ermA_{ij}^{(\ell)}]_{1\leq i, j\leq N}$ corresponding to $\gH_{\ell}$ is defined by
\begin{equation}
    \ermA_{ij}^{(\ell)} = \indi{i \neq j} \cdot \sum_{\substack{e\in \gE_{\ell}\\ \{i,j\}\subset e} } \etA_{e}^{(\ell)}\,. 
\end{equation}
For the non-uniform hypergraph $\gH$ sampled from model \ref{def:sym_non_uni_HSBM} with $\sL$ be a finte set of integers, its corresponding adjacency matrix $\rmA \coloneqq [\ermA_{ij}]_{1\leq i, j\leq N}$ is defined entrywisely as
\begin{equation}\label{eqn:adjacency_matrix_entries}
    \ermA_{ij} = \sum_{\ell \in \sL}\ermA_{ij}^{(\ell)}.
\end{equation}
\end{definition}

We compute the expectation of $\rmA$ first. In each $\ell$-uniform hypergraph $\gH_{\ell}$, two distinct vertices $i, j\in \gV$ with $i \neq j$ are picked arbitrarily since our model does not allow for loops. Assume for a moment $(N/K ) \in \N$, then the expected number of $\ell$-hyperedge containing $i$ and $j$ can be computed as follows. 
\begin{itemize}
    \item If $i$ and $j$ are from the same block, the $\ell$-hyperedge is sampled with probability $a_{\ell} /\binom{N - 1}{\ell-1}$ when the other $\ell - 2$ vertices are from the same block as $i$, $j$, otherwise with probability $b_{\ell} /\binom{N - 1}{\ell-1}$. Then
    \begin{align}
        \alpha_{\ell}\coloneqq\E\ermA_{ij}^{(\ell)} = \binom{\frac{N}{K} -2}{\ell - 2}  \frac{a_{\ell}}{\binom{N - 1}{\ell-1}} + \left[ \binom{N - 2}{\ell - 2} - \binom{\frac{N}{K} - 2}{\ell - 2} \right]\frac{b_{\ell}}{\binom{N - 1}{\ell-1}} \,.\notag
    \end{align}
    \item If $i$ and $j$ are not from the same block, we  sample the $\ell$-hyperedge with probability $b_{\ell} /\binom{N - 1}{\ell-1}$, and
   \begin{align}
         \beta_{\ell} \coloneqq\E\ermA_{ij}^{(\ell)} = \binom{N - 2}{\ell - 2} \frac{b_{\ell}}{\binom{N - 1}{\ell-1}}\,.\notag
    \end{align}
\end{itemize}
By assumption $a_\ell \geq b_{\ell}$,  then $\alpha_\ell \geq \beta_{\ell}$ for each $\ell \in \sL$. Summing over $\ell$, the \textit{expected adjacency} matrix under the $K$-block non-uniform $\textnormal{HSBM}$ can be written as 
\begin{align}\label{eqn:expected_adjacency_multiple}
  \E \rmA = \begin{bmatrix}
     \alpha \rmJ_{\frac{N}{K}}  &  \beta \rmJ_{\frac{N}{K}} & \cdots & \beta \rmJ_{\frac{N}{K}} \\
    \beta \rmJ_{\frac{N}{K}}  & \alpha \rmJ_{\frac{N}{K}} & \cdots & \beta \rmJ_{\frac{N}{K}} \\
    \vdots  &  \vdots & \ddots & \vdots \\
    \beta  \rmJ_{\frac{N}{K}} & \beta \rmJ_{\frac{N}{K}} & \cdots & \alpha \rmJ_{\frac{N}{K}}
  \end{bmatrix} - \alpha \rmI_{N}\,,
\end{align}
where $\rmJ_{\frac{N}{K}}\in \R^{\frac{N}{K} \times \frac{N}{K}}$ denotes the all-one matrix and
\begin{align}\label{eqn:alpha_beta}
    \alpha\coloneqq \sum_{\ell \in \sL} \alpha_{\ell}\,, \quad \beta\coloneqq \sum_{\ell \in \sL} \beta_{\ell}\,.
\end{align}

\begin{lemma}\label{lem:eigenvalues_HSBM}
The eigenvalues of $\E \rmA$ are given below:
    \begin{align*}
        \lambda_{1}(\E \rmA) =&\, \frac{N}{K}(\alpha + (K-1)\beta) - \alpha\,,\\
        \lambda_{j}(\E \rmA) =&\, \frac{N}{K}(\alpha - \beta) -\alpha \,,  \quad 2\leq j \leq K\,,\\
        \lambda_{j}(\E \rmA) =&\, - \alpha\,,  ~\quad \quad \quad K+1\leq j \leq N\,.
    \end{align*}
\end{lemma}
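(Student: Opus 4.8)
The plan is to exploit the Kronecker-product structure of $\E\bA+\alpha\bI_n$. Index the vertex set $[n]$ by pairs (block label in $[k]$, position within block in $[n/k]$). With this indexing the all-ones matrix factors as $\bJ_n=\bJ_k\otimes\bJ_{n/k}$, and the block-diagonal matrix whose $k$ diagonal blocks are $\bJ_{n/k}$ equals $\bI_k\otimes\bJ_{n/k}$. Hence, reading off \Cref{eqn:expected_adjacency_k},
\[
\E\bA+\alpha\bI_n \;=\; (\alpha-\beta)\,\bI_k\otimes\bJ_{n/k}\;+\;\beta\,\bJ_k\otimes\bJ_{n/k}\;=\;\big[(\alpha-\beta)\bI_k+\beta\bJ_k\big]\otimes\bJ_{n/k}\,.
\]

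Next I would diagonalize each factor. The $k\times k$ matrix $(\alpha-\beta)\bI_k+\beta\bJ_k$ has eigenvalue $\alpha+(k-1)\beta$ on the constant vector and eigenvalue $\alpha-\beta$ on its $(k-1)$-dimensional orthogonal complement; the $\frac nk\times\frac nk$ all-ones matrix $\bJ_{n/k}$ has eigenvalue $n/k$ once and $0$ with multiplicity $n/k-1$. Since the spectrum of a Kronecker product is the set of pairwise products of the eigenvalues of the factors (with the Kronecker products of eigenvectors as eigenvectors), $\E\bA+\alpha\bI_n$ has eigenvalue $\frac nk\big(\alpha+(k-1)\beta\big)$ with multiplicity $1$, eigenvalue $\frac nk(\alpha-\beta)$ with multiplicity $k-1$, and eigenvalue $0$ with multiplicity $(n/k-1)+(k-1)(n/k-1)=n-k$. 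Subtracting $\alpha\bI_n$ shifts every eigenvalue by $-\alpha$, which is precisely the list in \Cref{lem:eigenvalues_HSBM}.

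Equivalently — and this is the same computation stated without Kronecker notation — one can decompose $\R^n=W_0\oplus W_1\oplus W_2$, where $W_0$ is spanned by the all-ones vector, $W_1$ is the space of vectors that are constant on each block and orthogonal to $W_0$ (dimension $k-1$), and $W_2$ is the space of vectors summing to zero on each block (dimension $n-k$); one checks these are invariant under $\E\bA+\alpha\bI_n$ and evaluates the scalar action on each. There is no genuine obstacle here; the only points requiring minimal care are the multiplicity bookkeeping — verifying $1+(k-1)+(n-k)=n$, so that we have indeed exhibited every eigenvalue — and recalling that $n/k\in\N$, which is what makes the block partition and the binomial-coefficient expressions defining $\alpha$ and $\beta$ in \Cref{eqn:alpha_beta} well posed.
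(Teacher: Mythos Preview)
Your proof is correct. The paper itself does not give a proof beyond the remark that the lemma ``can be verified through direct computation,'' and your Kronecker-product argument is precisely such a computation, carried out cleanly; the alternative block-constant decomposition $W_0\oplus W_1\oplus W_2$ you sketch is the standard way to phrase the same calculation without tensor notation.
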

\Cref{lem:eigenvalues_HSBM} can be verified via direct computation. \Cref{lem:general_eigen_value_approximation} is used for approximately equi-partitions, meaning that eigenvalues of $\widetilde{\E \rmA}$ can be approximated by eigenvalues of $\E \rmA$ when $N$ is sufficiently large.
\begin{lemma}\label{lem:general_eigen_value_approximation}
For any partition $(\gV_1, \dots, \gV_{K})$ of $\gV$ where $N_i \coloneqq |\gV_i|$, consider the following matrix
\begin{align*}
\widetilde{\E \rmA} = \begin{bmatrix}
     \alpha \rmJ_{N_{1}} &  \beta \rmJ_{N_{1} \times N_{2}} & \cdots & \beta \rmJ_{N_{1} \times N_{K-1}} & \beta \rmJ_{N_{1} \times N_{K}} \\
    \beta \rmJ_{N_{2} \times N_{1}}  & \alpha \rmJ_{N_{2}} & \cdots & \beta \rmJ_{N_{2} \times N_{K-1}} & \beta \rmJ_{N_{2} \times N_{K}} \\
    \vdots  &  \vdots & \ddots & \vdots & \vdots\\
    \beta \rmJ_{N_{K-1} \times N_{1}}  & \beta \rmJ_{N_{K-1} \times N_{2}} & \cdots & \alpha \rmJ_{N_{K-1}} & \beta \rmJ_{N_{K-1} \times N_{K}} \\
    \beta  \rmJ_{N_{K} \times N_{1}} & \beta \rmJ_{N_{K} \times N_{2}} & \cdots & \beta \rmJ_{N_{K} \times N_{K-1}} & \alpha \rmJ_{N_{K}}
  \end{bmatrix} - \alpha \rmI_{N}\,.
\end{align*}
Assume that $N_i = \frac{N}{K} + O(\sqrt{N} \log(N))$ for all $k \in [K]$. Then, for all $1\leq i \leq K$, 
\begin{align*}
    \frac{|\lambda_i( \widetilde{\E \rmA} ) - \lambda_i(\E \rmA)|}{|\lambda_i(\E \rmA)| } = O\Big(N^{-\frac{1}{4}}\log^{\frac{1}{2}}(N)\Big)\,.
\end{align*}
\end{lemma}
Note that both $(\,\widetilde{\E \rmA} + \alpha \rmI_{N})$ and $(\,\E \rmA + \alpha \rmI_{N})$ are rank $K$ matrices, then $\lambda_i(\widetilde{\E \rmA}) = \lambda_i(\E \rmA) = -\alpha$ for all $(K+1) \leq i \leq N$. At the same time, $\mathrm{SNR}$ in \eqref{eqn:SNRk} is related to the following quantity
\begin{equation}
    \begin{aligned}
    &\, \frac{[\lambda_2(\E\rmA)]^{2}}{\lambda_1(\E\rmA)} = \frac{[(N - K)\alpha - N \beta]^2}{K[(N - K)\alpha + N(K-1)\beta]}
    = (1+o(1)) \cdot \frac{ \left[\sum_{\ell \in \sL} (\ell - 1)(a_{\ell} - b_{\ell})K^{-\ell + 1} \right]^2 }{\sum_{\ell \in \sL} (\ell - 1)\big((a_{\ell} - b_{\ell})K^{-\ell + 1} + b_{\ell} \big)}\notag.
    \end{aligned}
\end{equation}
When $\sL = \{2\}$ and $K$ is fixed, $\mathrm{SNR}$ in \eqref{eqn:SNRk} is equal to $\frac{(a - b)^2}{K[a + (K-1)b]}$, which corresponds to the $\mathrm{SNR}$ for the undirected graph in \cite{Chin2015StochasticBM}, see also \cite[Section 6]{Abbe2018CommunityDA}.

\section{Spectral norm concentration}\label{sec:concentration_partial}
The correctness of \Cref{alg:multiple_partition_partial} and \Cref{alg:binary_partition_partial} relies on the concentration of the adjacency matrix of $\gH$. The following two concentration results for general random hypergraphs are included, which are independent of HSBM model. The proofs are deferred to \Cref{sec:proof_concentration_partial}.

\subsubsection{Inhomogeneous Erd\H{o}s-R\'{e}nyi random hypergraphs}
\begin{definition}[Inhomogeneous Erd\H{o}s-R\`enyi hypergraph]\label{def:non_unifom_ER_hypergraph}
    Let $\tQ^{(\ell)} \in ([0, 1]^{N})^{\otimes \ell}$ be a symmetric probability tensor, i.e., $\etQ_{i_1, \ldots, i_{\ell}} = \etQ_{i_{\pi(1)}, \ldots, i_{\pi(\ell)}}$ for any permutation $\pi$ on $[\ell]$, where $\ell \geq 2$ is some finite integer. Let $\gH_{\ell} = (\gV, \gE_{\ell})$ denote inhomogeneous $\ell$-uniform Erd\H{o}s-R\'{e}nyi hypergraph associated with the probability tensor $\tQ^{(\ell)}$. Let $\tA^{(\ell)}$ denote the adjacency tensor of $\gH_{\ell}$, where each $\ell$-hyperedge $e = \{i_1, \ldots, i_{\ell}\}\subset \gV$ appears with probability $\P(\etA^{(\ell)}_{e} = 1) = \etQ^{(\ell)}_{i_1,\dots, i_{\ell}}$. Let $\sL = \{\ell \mid \ell \geq 2, \ell \in \N \}$ be a finite set of integers, where $\LM$ denotes its maximum element. An inhomogeneous non-uniform Erd\H{o}s-R\'{e}nyi hypergraph is the union of uniform ones, i.e., $\gH = \cup_{\ell \in \sL} \gH_{\ell}$.
\end{definition}

\begin{theorem}\label{thm:concentration_partial}
Let $\gH = \cup_{\ell \in \sL} \gH_{\ell}$ be an inhomogeneous non-uniform Erd\H{o}s-R\'{e}nyi hypergraph sampled from model \ref{def:non_unifom_ER_hypergraph}. Let $\{\tQ^{(\ell)}\}_{\ell\in \sL}$ denote the probability tensors such that $\etQ^{(\ell)}_{i_1,\dots, i_{\ell}} = d_{i_1,\dots,i_{\ell}} /\binom{N - 1}{\ell-1}$ for each $\ell \in sL$, and denote $d_{\ell} = \max d_{[i_1,\dots,i_{\ell}]}$. Let $\rmA$ be the adjacency matrix of $\gH$ constructed from the adjacency tensors $\{\tA^{(\ell)}\}_{\ell\in \sL}$. For some constant $\const_{\eqref{eqn:assumption_d_partial}} > 0$, suppsoe that the following assumption hold
\begin{align}
    d\coloneqq\sum_{\ell\in \sL}(\ell - 1)\cdot d_{\ell} \geq \const_{\eqref{eqn:assumption_d_partial}}\log(N)\,.\label{eqn:assumption_d_partial}
\end{align}
 Then for any $\theta >0$, there exists a constant 
 $
    \const_{\eqref{eqn:concentration_partial}} = 512 \LM (\ell - 1)(\theta + 6)\left[ 2 + (\ell - 1)(1 + \theta )/\const_{\eqref{eqn:assumption_d_partial}} \right]
 $
such that with probability at least $1-2N^{-\theta}-2e^{-N}$, the adjacency matrix $\rmA$ of $\gH$ satisfies  
\begin{align}
    \|\rmA - \E \rmA\|\leq \const_{\eqref{eqn:concentration_partial}} \sqrt{d}\,. \label{eqn:concentration_partial}
\end{align}
\end{theorem}

The inequality \eqref{eqn:concentration_partial} can be reduced to the result for graph case obtained in \cite{Feige2005SpectralTA,Lei2015ConsistencyOS} by taking $\sL = \{2\}$. The result for a uniform hypergraph is obtained in \cite{Lee2020RobustHC}. Note that $d$ is a fixed constant in our community detection problem, thus the Assumption \ref{eqn:assumption_d_partial} does not hold and the inequality \eqref{thm:concentration_partial} cannot be directly applied. However, we can still prove a concentration bound for a regularized version of $\rmA$, following the same strategy of the proof for \Cref{thm:concentration_partial}. 
\begin{definition}[Regularized matrix]\label{def:regularize_partial}
Given any $N \times N$ matrix $\rmA$ and an index set $\sI$, let $\rmA_{\sI}$ be the $N \times N$ matrix obtained from $\rmA$ by zeroing out the rows and columns not indexed by $\sI$. Namely,
\begin{equation}\label{eqn:restricted_adjacency_matrix_partial}
     (\rmA_{\sI})_{ij} = \indi{i, j\in \sI} \cdot \ermA_{ij}\,.
\end{equation}
\end{definition}
Note that every hyperedge of size $\ell$ containing $i$ is counted $(\ell - 1)$ times in the $i$-th row of $\rmA$. We define the $i$-th row sum of $\rmA$ as
\begin{align*}
    \textnormal{row}(i) \coloneqq\sum_{j}  \ermA_{ij} \coloneqq \sum_{j} \indi{i \neq j} \sum_{\ell \in \sL}\,\,\sum_{\substack{e\in \gE_{\ell}\\ \{i,j\}\subset e} }\etA_{e}^{(\ell)} = \sum_{\ell \in \sL}\,(\ell - 1) \sum_{e\in \gE_{\ell}:\, i\in e} \etA_e^{(\ell)}.
\end{align*}

\Cref{thm:regularization_concentration_partial} is the concentration result for the regularized $\rmA_{\sI}$, by zeroing out rows and columns corresponding to vertices with high row sums.

\begin{theorem}\label{thm:regularization_concentration_partial}
Following all the notations in \Cref{thm:concentration_partial},  for any constant $\tau >1$, define the index set
\begin{align}
    \sI=\{i\in \gV: \textnormal{row}(i)\leq \tau d\}.\label{eqn:regularization_set}
\end{align}
Let $\rmA_{\sI}$ be the regularized version of $\rmA$, as in \Cref{def:regularize_partial}. Then for any $\theta > 0$, there exists a constant $\const_{\tau}=2((5 \LM + 1)(\ell - 1) + \alpha_0 \sqrt{\tau})$ with $\alpha_0 = 16 + \frac{32}{\tau}(1+e^2)+128 \LM (\ell - 1)(\theta + 4)(1+\frac{1}{e^2})$, such that
\begin{align}
    \|\rmA_{\sI} - \E \rmA_{\sI}\| \leq \const_{\tau} \sqrt{d}\,\label{eqn:regularization_concentration_partial}
\end{align}
holds with probability at least $1-2(e/2)^{-N} -N^{-\theta}$.
\end{theorem}

\section{Algorithms blocks}\label{sec:algorithm_blocks}

In this section, we are going to present the algorithmic blocks constructing our main partition method (\Cref{alg:multiple_partition_partial}): pre-processing (\Cref{alg:parameter_preprocessing}), initial result by spectral method (\Cref{alg:multi_block_spectral_partition}), correction of blemishes via majority rule (\Cref{alg:multi_block_correction}), and merging (\Cref{alg:multi_block_merging}). 

\begin{algorithm}
\caption{\textbf{Pre-processing}}\label{alg:parameter_preprocessing}
\SetAlgoLined
\KwData{The set of integers $\sL$, and the parameters $\{a_{\ell}\}_{\ell \in \sL}$, $\{b_{\ell}\}_{\ell \in \sL}$.}
\For{each subset $\mathcal{S} \subset \sL$}{
    Let $\gH_{\mathcal{S}} = \bigcup_{\ell \in \mathcal{S}}\gH_{\ell}$ denote the restriction of the non-uniform hypergraph $\gH$ on $\mathcal{S}$. Compute
    \[
        \mathrm{SNR}_{\mathcal{S}}\coloneqq \frac{ \left[\sum_{\ell \in \mathcal{S}} (\ell - 1)(a_{\ell} - b_{\ell})K^{-\ell + 1} \right]^2 }{\sum_{\ell \in \mathcal{S}} (\ell - 1)\Big((a_{\ell} - b_{\ell})K^{- \ell + 1} + b_{\ell} \Big)}\;
    \]
}
Among all the $\mathcal{S}$, find the subset $\sL^{\star}$ which maximizes $\mathrm{SNR}_{\mathcal{S}}$, i.e.,
\[
    \sL^{\star}\coloneqq \arg\max_{\mathcal{S} \subset \sL} \,\mathrm{SNR}_{\mathcal{S}}\, ,
\]
where $\LM^{\star}$ denotes its maximal element.

\KwResult{$\sL^{\star}$}
\end{algorithm}

\subsection{Three or more communities ($K \geq 3$)}

\begin{algorithm}[h]
\caption{\textbf{Spectral Partition}}\label{alg:multi_block_spectral_partition}
\KwData{The tensors $\{\tA^{(\ell)}\}_{\ell \in \sL}$, the matrix $\rmA$, and the parameters $\{a_{\ell}\}_{\ell \in \sL}$, $\{b_{\ell}\}_{\ell \in \sL}$.}

Randomly label vertices in $\gY$ with $+1$ and $-1$ sign with equal probability, and partition $\gY$ into $2$ disjoint subsets $\gY_1$ and $\gY_2$\;

Let $\rmB_{1}$ (resp. $\rmB_{2}$) denote the adjacency matrices with all vertices in $\sZ \cup \gY_1$ (resp. $\sZ \cup \gY_2$), with rows indexed by $\sZ$ and columns indexed by $\gY_1$ (resp. $\gY_2$).\;

Pad $\rmB_{1}$, $\rmB_{2}$ with zeros to obtain the $N \times N$ matrices $\rmA_{1}$ and $\rmA_{2}$.\;

Let $d\coloneqq \sum_{\ell \in \sL}(\ell - 1)a_{\ell}$. Zero out all the rows and columns of $\rmA_{1}$ corresponding to vertices whose row sum is bigger than $20\LM d$, to obtain the matrix $(\rmA_{1})_{\sI_1}$\;

Find the space $\rmU$, spanned by the first $K$ left singular vectors of $(\rmA_{1})_{\sI_1}$\;

Randomly sample $s = 2K\log^2(N)$ vertices from $\gY_2$ without replacement. Denote the corresponding columns in $\rmA_{2}$ by $\rva_{i_1},\ldots, \rva_{i_s}$. For each $i\in \{i_1, \cdots, i_s\}$, project $\rva_{i} - \overline{\rva}$ onto $\rmU$, where the elements in vector $\overline{\rva}\in \R^{N}$ is defined by $\overline{\rva}(j) = \indi{j\in Z}\cdot (\overline{\alpha} + \overline{\beta})/2$, with $\overline{\alpha}$, $\overline{\beta}$ defined in \eqref{eqn:bar_alphabeta}\;

For each projected vector $\rmP_{\rmU}(\rva_{i}- \overline{\rva})$, identify the top $N/(2K)$ coordinates in value as a set $\gU_i$. Discard half of the $s$ sets $\gU_i$, those with the lowest blue hyperedge density in them\;

Sort the remaining sets according to blue hyperedge density and identify first $K$ distinct subsets $\widehat{\gU}_{1}^{(0)}, \ldots, \widehat{\gU}_{K}^{(0)}$ such that $|\widehat{\gU}_{j}^{(0)} \cap \widehat{\gU}_{k}^{(0)}| < \lceil {(1 - \nu)N}/{K} \rceil$ if $j\neq k$\;

\KwResult{The partition $\widehat{\gU}_{1}^{(0)}, \ldots, \widehat{\gU}_{K}^{(0)}$}
\end{algorithm}

\begin{algorithm}[h]
\KwData{The initial partition $\{\widehat{\gU}_{k}^{(0)}\}_{k\in [K]}$, the tensors $\{\tA^{(\ell)}\}_{\ell \in \sL}$, and the parameters $\{a_{\ell}\}_{\ell \in \sL}$, $\{b_{\ell}\}_{\ell \in \sL}$.}

\caption{\textbf{Correction}}\label{alg:multi_block_correction}
\For{each $u \in \sZ$}{
    Add $u$ to $\widehat{\gU}_{k}$ if the number of red hyperedges, which contains $u$ with the remaining vertices located in vertex set $\widehat{\gU}_{k}^{(0)}$, is at least $\mu_{\rm{C}}$ \eqref{eqn:mu_correction}\;
    Break ties arbitrarily\;
}
\KwResult{$\widehat{\gU}_{1}, \ldots, \widehat{\gU}_{K}$}
\end{algorithm}

\begin{algorithm}[h]
\caption{\textbf{Merging}}\label{alg:multi_block_merging}
\KwData{The partition $\{\widehat{\gU}_{k}\}_{k\in [K]}$, the tensors $\{\tA^{(\ell)}\}_{\ell \in \sL}$, and the parameters $\{a_{\ell}\}_{\ell \in \sL}$, $\{b_{\ell}\}_{\ell \in \sL}$.}

\For{all $u \in \gY$}{
    Add $u$ to $\widehat{\gV}_{k}$ if the number of blue hyperedges, which contains $u$ with the remaining vertices located in vertex set $\widehat{\gU}_{K}$, is at least $\mu_{\rm{M}}$ \eqref{eqn:mu_merge}\;
    Label the conflicts arbitrarily\;
}
\KwResult{The estimated sets $\widehat{\gV}_{1}, \ldots, \widehat{\gV}_{K}$}
\end{algorithm}

The proof of \Cref{thm:partial_multiple} is structured as follows.
\begin{lemma}\label{lem:spectral_clustering_accuracy_multiple}
  Under the assumptions of \Cref{thm:partial_multiple}, \Cref{alg:multi_block_spectral_partition} outputs a $\nu$-correct partition $\widehat{\gU}_{1}^{(0)}, \cdots, \widehat{\gU}_{K}^{(0)}$ of $\gZ = (\gZ \cap \gV_{1}) \cup \cdots \cup (\gZ \cap \gV_{K})$ with probability at least $1 - O(N^{-2})$.
\end{lemma}
Lines $4$ and $6$ contribute most complexity in \Cref{alg:multi_block_spectral_partition}, requiring $O(N^{3})$ and $O(N^{2}\log^2(N))$ each (technically, one should be able to get away with $O(N^2 \log(1/\epsilon))$ in line 4, for some desired accuracy $\epsilon$ to get the singular vectors). We will conservatively estimate the time complexity of \Cref{alg:multi_block_spectral_partition} as $O(N^3)$.

\begin{lemma}\label{lem:correction_accuracy_multiple}
Under the assumptions of \Cref{thm:partial_multiple}, for any $\nu$-correct partition $\widehat{\gU}_{1}^{(0)}, \cdots, \widehat{\gU}_{K}^{(0)}$ of $\gZ = (\gZ \cap \gV_{1}) \cup \cdots \cup (\gZ \cap \gV_{K})$ and the red hypergraph over $\gZ$,  \Cref{alg:multi_block_correction} computes a $\gamma_{\rm{C}}$-correct partition $\widehat{\gU}_{1}, \ldots, \widehat{\gU}_{K}$ with probability $1 -O(e^{-N\rho})$, while $\gamma_{\rm{C}} = \max\{\nu,\, 1 - K\rho\}$ with
$
    \rho \coloneqq k\exp\left( -\const_{\sL}(K, \nu) \cdot \mathrm{SNR}_{\sL}(K) \right)
$
where $\sL$ is obtained from \Cref{alg:parameter_preprocessing}, and $\mathrm{SNR}_{\sL}(K)$ and $\const_{\sL}(K, \nu)$ are defined in \eqref{eqn:SNRk}, \eqref{eqn:CMk}.
\end{lemma}

\begin{lemma}\label{lem:merging_accuracy_k}
    Given any $\nu$-correct partition $\widehat{\gU}_{1}, \ldots, \widehat{\gU}_{K}$ of $\gZ = (\gZ \cap \gV_{1}) \cup \cdots \cup (\gZ \cap \gV_{K})$ and the blue hypergraph between $\gY$ and $\gZ$, with probability $1 -O(e^{-N\rho})$,  \Cref{alg:multi_block_merging} outputs a $\gamma$-correct partition $\widehat{\gV}_1, \ldots, \widehat{\gV}_{K}$ of $\gV_1 \cup \gV_2 \cup \ldots \cup \gV_{K}$, while $\gamma =\max\{\nu,\, 1 - K\rho\}$.
\end{lemma}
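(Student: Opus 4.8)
The plan is to show that \Cref{alg:k_block_merging} misclassifies at most a $2\rho$--fraction of each block $Y\cap V_i$. Since $2\rho\le\min\{k\rho,\,1-\nu\}$ once $\rho$ is small (which the hypotheses will force), and since on $Z$ the returned sets satisfy $\widehat V_i\cap Z=\widehat U_i$ with $\widehat U_1,\dots,\widehat U_k$ being $\gamma$--correct (the input partition, being the output of \Cref{alg:k_block_correction}, is in fact $\gamma$--correct with $\gamma=\max\{\nu,1-k\rho\}$ by \Cref{lem:correction_accuracy_k}), this gives, using $|Z\cap V_i|,|Y\cap V_i|=(1+o(1))\tfrac{n}{2k}$ for the balanced random split,
\[
|V_i\cap\widehat V_i|\ \ge\ \gamma\,|Z\cap V_i|+(1-2\rho)\,|Y\cap V_i|\ \ge\ \gamma\,|V_i|\qquad(i\in[k]),
\]
which is the assertion of the lemma.

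I would first condition on all the randomness preceding the Merging step --- the red/blue colouring, the $Y/Z$ split (on the $1-e^{-\Omega(\sqrt n)}$ event that it is balanced as above), and hence the partition $\widehat U_1,\dots,\widehat U_k$ of $Z$. Conditioned on the red hyperedges, the blue hypergraph remains an inhomogeneous random hypergraph with independent edges whose probabilities are $(1+o(1))$ times one half of the original ones, and it is independent of $\widehat U_1,\dots,\widehat U_k$. For $u\in Y\cap V_i$ and $j\in[k]$ let $X_j(u)$ be the number of blue $m$--hyperedges with $m\in\mathcal M$ that contain $u$ and whose remaining $m-1$ vertices all lie in $\widehat U_j$, so that \Cref{alg:k_block_merging} places $u$ into $\widehat U_j$ iff $X_j(u)\ge\mu_{\mathrm M}$. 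The key structural fact is that for distinct pairs $(u,j)$ the hyperedge sets counted by $X_j(u)$ are pairwise disjoint --- each such hyperedge meets $Y$ only in $u$, and the $\widehat U_j$'s partition $Z$ --- so the $\{X_j(u)\}_{u\in Y,\,j\in[k]}$ are jointly independent, each a sum of independent Bernoulli variables.

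Next I would estimate the means. With $w_j=|\widehat U_j|$ and $w_{ji}=|\widehat U_j\cap V_i|$, $\gamma$--correctness and balance give $w_{ii}\ge\gamma|Z\cap V_i|$, $w_{ji}\le(1-\gamma)|Z\cap V_i|$ for $j\ne i$, and $w_j\le\bigl(1+(k-1)(1-\gamma)\bigr)(1+o(1))\tfrac{n}{2k}$; using $\binom{w}{m-1}/\binom{n}{m-1}=(1+o(1))(w/n)^{m-1}$ one gets, for $\ell\in[k]$,
\[
\E X_\ell(u)=\tfrac12\sum_{m\in\mathcal M}\Bigl[\bigl(\tfrac{w_{\ell i}}{n}\bigr)^{m-1}(a_m-b_m)+\bigl(\tfrac{w_\ell}{n}\bigr)^{m-1}b_m\Bigr](1+o(1)),
\]
so, substituting the size bounds, $\E X_i(u)-\max_{j\ne i}\E X_j(u)$ is at least, up to $(1+o(1))$,
\[
\tfrac12\sum_{m\in\mathcal M}(2k)^{-(m-1)}\Bigl[\bigl(\gamma^{m-1}-(1-\gamma)^{m-1}\bigr)a_m-\bigl((1+(k-1)(1-\gamma))^{m-1}-(1-\gamma)^{m-1}\bigr)b_m\Bigr].
\]
Because $d$ and the $a_m,b_m$ are bounded constants and $\mathrm{SNR}_{\mathcal M}>0$, this is a positive constant once $\gamma$ is within a suitable ($a_m,b_m$--dependent) distance of $1$, and this is exactly what \eqref{eqn:SNRcondition} secures: it yields $\mathrm{SNR}_{\mathcal M}\ge C_\nu^2\log\tfrac{k}{1-\nu}$ (a direct consequence of \eqref{eqn:SNRcondition} together with $d=\sum_{m\in\mathcal M}(m-1)a_m$), hence makes $k\rho=k^2e^{-C_{\mathcal M}\mathrm{SNR}_{\mathcal M}}$ as small as needed by choosing the constant $C_\nu$ large. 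Taking $\mu_{\mathrm M}$ to be the midpoint between $\max_{j\ne i}\E X_j(u)$ and $\E X_i(u)$, a Bernstein/Chernoff bound for sums of independent Bernoullis bounds both $\Pr[X_i(u)\le\mu_{\mathrm M}]$ and $\Pr[X_j(u)\ge\mu_{\mathrm M}]$ by $\exp(-\Omega(\mathrm{SNR}_{\mathcal M}))$ --- and the explicit constant $C_{\mathcal M}$ in \eqref{eqn:SNR_CM} is chosen precisely so that this exponent is at least $C_{\mathcal M}\mathrm{SNR}_{\mathcal M}$ --- so a union bound over the $k-1$ wrong blocks gives $\Pr[u\text{ misclassified}]\le k\,e^{-C_{\mathcal M}\mathrm{SNR}_{\mathcal M}}=\rho$.

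Finally, since the misclassification events over $u\in Y\cap V_i$ are independent, their number is stochastically dominated by $\mathrm{Bin}(|Y\cap V_i|,\rho)$, hence exceeds $2\rho|Y\cap V_i|$ with probability $e^{-\Omega(\rho n)}$; a union bound over $i\in[k]$ and over the conditioning events gives overall failure probability $O(e^{-n\rho})$, and combining with the first paragraph finishes the proof. I expect the two main difficulties to be: (i) controlling the contamination of $\widehat U_j$ by vertices from blocks other than $i$ and $j$ (the $b_m$--terms with $w_j$ above), which is why one genuinely needs the input correctness $\gamma$ --- equivalently the SNR --- to be large rather than merely $>\tfrac12$, and which is where \eqref{eqn:SNRcondition} and the choice of $C_\nu$ enter; and (ii) pushing the Chernoff estimate through carefully enough to extract the explicit constant $C_{\mathcal M}$ of \eqref{eqn:SNR_CM}, whose factors $2^{2\mathcal M_{\max}+3}$ and $(\mathcal M_{\max}-1)^2$ arise from the $(2k)^{-(m-1)}$ versus $k^{-(m-1)}$ mismatch and from the hyperedge counts here carrying no $(m-1)$ multiplicity, unlike the degree-based $\mathrm{SNR}_{\mathcal M}$.
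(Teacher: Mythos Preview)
Your proposal is essentially correct and follows the same architecture as the paper's proof in Section~\ref{subsec:merging}: condition on the prior randomness, compute the expected counts $\E X_j(u)$ for $u\in Y\cap V_i$, apply a Bernstein bound to each tail $\{X_i(u)\le\mu_{\mathrm M}\}$ and $\{X_j(u)\ge\mu_{\mathrm M}\}$ to bound the per-vertex misclassification probability by $k\rho$ (your union bound over $j\ne i$), and finish with a Chernoff bound on the number of misclassified vertices in each $Y\cap V_i$. Your observation that the hyperedge families underlying the $X_j(u)$'s are pairwise disjoint, so that the misclassification indicators over $u\in Y\cap V_i$ are genuinely independent, is a nice point the paper leaves implicit.

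Two differences are worth flagging. First, the paper uses the $(m-1)$--weighted count $\widehat S_{lj}(w)=\sum_{m\in\mathcal M}(m-1)\sum_e \bcT_e^{(m)}$ rather than your unweighted $X_j(u)$; this is what makes the variance and gap align naturally with $\mathrm{SNR}_{\mathcal M}$ and produces the constant $C_{\mathcal M}$ without the extra bookkeeping you anticipate in difficulty~(ii). Second, your concern~(i) about contamination of $\widehat U_j$ by foreign blocks inflating the $b_m$--term is handled much more simply in the paper: since $|\widehat U_j|\approx n/(2k)$ for every $j$ (a consequence of the balanced split and the $\nu$--correctness of the input partition), the $b_m$--contribution $\binom{|\widehat U_j|}{m-1}\phi_m$ is essentially the same for all $j$ and therefore cancels in the gap $\E\widehat S_{ll}-\E\widehat S_{lj}$. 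What remains depends only on $|U_l\cap\widehat U_j|$, and the bounds $|U_l\cap\widehat U_l|\ge\nu n/(2k)$, $|U_l\cap\widehat U_j|\le(1-\nu)n/(2k)$ for $j\ne l$ follow directly from $\nu$--correctness; no appeal to $\gamma$ being close to $1$ or to \eqref{eqn:SNRcondition} is needed at this point. This makes the argument both shorter and applicable under the bare $\nu$--correctness hypothesis stated in the lemma.
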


The time complexities of Algorithms \ref{alg:multi_block_correction} and \ref{alg:multi_block_merging} are $O(N)$, since each vertex is adjacent to only constant many hyperedges.

\subsection{Binary communities ($K = 2$)}
The spectral partition step is given in \Cref{alg:binary_block_spectral_partition}, and the correction step is given in \Cref{alg:binary_block_correction}.
\begin{algorithm}[h]
\caption{\textbf{Spectral Partition}}\label{alg:binary_block_spectral_partition}
\KwData{The tensors $\{\tA^{(\ell)}\}_{\ell \in \sL}$, the matrix $\rmA$, and the parameters $\{a_{\ell}\}_{\ell \in \sL}$, $\{b_{\ell}\}_{\ell \in \sL}$.}

Zero out all the rows and the columns of $\rmA$ corresponding to vertices with row sums greater than $20\LM d$, to obtain the regularized matrix $\rmA_{\sI}$\;

Find the subspace $\rmU$, which is spanned by the eigenvectors corresponding to the largest two eigenvalues of $\rmA_{\sI}$\;

Compute $\rmP_{\rmU}\ones_{N}$, the projection of all-ones vector onto $\rmU$\;

Let $\rvv$ be the unit vector in $\rmU$ perpendicular to $\rmP_{\rmU}\ones_{N}$\;

Sort the vertices according to their values in $\rvv$. Let $\widehat{\gV}_{1}^{(0)} \subset V$ be the corresponding top $N/2$ vertices, and $\widehat{\gV}_{2}^{(0)} \subset V$ be the remaining $N/2$ vertices\;

\KwResult{$\widehat{\gV}_{1}^{(0)}$, $\widehat{\gV}_{2}^{(0)}$}
\end{algorithm}

\begin{lemma}\label{lem:spectral_clustering_accuracy_binary}
Under the conditions of \Cref{thm:partial_binary}, the \Cref{alg:binary_block_spectral_partition} outputs a $\nu$-correct partition $\widehat{\gV}_{1}^{(0)}, \widehat{\gV}_{2}^{(0)}$ of $\gV = \gV_1 \cup \gV_2$ with probability at least $1 - O(N^{-2})$.
\end{lemma}

\begin{algorithm}[h]
\caption{\textbf{Correction}}\label{alg:binary_block_correction}

\For{$v \in \widehat{\gV}_{1}^{(0)}$}{
    Label $v$ as ``bad'' if the number of blue-hyperedges which contain $v$ with the remaining vertices in $\widehat{\gV}_{2}^{(0)}$ is at least $\mu_{\rm{C}}$; otherwise label $v$ as ``good''.
}
\For{$v \in \widehat{\gV}_{2}^{(0)}$}{
    Label $v$ as ``bad'' if the number of blue-hyperedges which contain $v$ with the remaining vertices in $\widehat{\gV}_{1}^{(0)}$ is at least $\mu_{\rm{C}}$; otherwise label $v$ as ``good''\;
}
\For{$v \in \gV$}{
    \If{$v \in \widehat{\gV}_{i}^{(0)}$ and $v$ is good}{
        Assign $v$ to $\widehat{\gV}_{i}$\;
    }
    \Else{
        Assign $v$ to $\widehat{\gV}_{3-i}$\;
    }
}

\KwResult{$\widehat{\gV}_{1}$, $\widehat{\gV}_{2}$}
\end{algorithm}
\begin{lemma}\label{lem:correction_accuracy_binary}
    Given any $\nu$-correct partition $\gV_1^{(0)}, \gV_2^{(0)}$ of $\gV = \gV_1 \cup \gV_2$, with probability at least $1 -O(e^{-N\rho})$, the \Cref{alg:binary_block_correction} computes a $\gamma$-correct partition $\widehat{\gV}_1, \widehat{\gV}_2$ with $\gamma = \{\nu,\, 1 - 2\rho\}$ and $\rho = 2\exp( -\const_{\sL}(2, \nu) \cdot \mathrm{SNR}_{\sL}(2))$, where $\mathrm{SNR}_{\sL}(2)$ and $\const_{\sL}(2, \nu)$ are defined in \eqref{eqn:SNRk} and \eqref{eqn:CMk} respectively.
\end{lemma}
    
\section{Algorithm's correctness}\label{sec:analysis_multiple}
We are going to present the correctness of \Cref{alg:multiple_partition_partial} in this section. The correctness of \Cref{alg:binary_partition_partial} is deferred to \Cref{sec:analysis_binary}. We first introduce some definitions.

\subsubsection*{Vertex set splitting and adjacency matrix}
In \Cref{alg:multiple_partition_partial}, we first randomly partition the vertex set $\gV$ into two disjoint subsets $\gZ$ and $\gY$ by assigning $+1$ and $-1$ to each vertex independently with equal probability. Let $\rmB \in \R^{|\gZ|\times |\gY|}$ denote the submatrix of $\rmA$, while $\rmA$ was defined in \eqref{eqn:adjacency_matrix_entries}, where rows and columns of $\rmB$ correspond to vertices in $\gZ$ and $\gY$ respectively. Let $N_{j}$ denote the number of vertices in $\gZ \cap \gV_j$, where $\gV_j$ denotes the true partition with $|\gV_j| = \frac{N}{K}$ for all $j \in [K]$, then $N_j$ can be written as a sum of independent Bernoulli random variables, i.e.,
\begin{align}\label{eqn:dimension_ZcapVi}
    N_j = |\gZ \cap \gV_j| = \sum_{v\in \gV_j} \indi{v\in \gZ}\, ,
\end{align}
and $|\gY \cap \gV_j| = |\gV_j| - |\gZ \cap \gV_j| = \frac{N}{K} - N_j$ for each $j \in [K]$.
\begin{definition}
The splitting $\gV = \gZ \cup \gY$ is \textit{perfect} if $|\gZ \cap \gV_j| = |\gY\cap \gV_j| = N/(2K)$ for all $j \in [K]$. And the splitting $\gY = \gY_1 \cup \gY_2$ is \textit{perfect} if $|\gY_1\cap \gV_j| = |Y_2\cap \gV_j|  = N/(4K)$ for all $j \in [K]$.
\end{definition}
However, the splitting is \textit{imperfect} in most cases since the size of $\gZ$ and $\gY$ would not be exactly the same under the independence assumption. 
The random matrix $\rmB$ is parameterized by $\{\tA^{(\ell)}\}_{\ell \in \sL}$ and $\{N_j\}_{j=1}^{K}$. If we take expectation over $\{\tA^{(\ell)}\}_{\ell \in \sL}$ given the block size information $\{N_j\}_{j=1}^{K}$, then it gives rise to the expectation of the \textit{imperfect} splitting, denoted by $\widetilde{\rmB}$,
\begin{equation*}
    \widetilde{\rmB} \coloneqq \begin{bmatrix}
     \alpha \rmJ_{N_{1} \times ( \frac{N}{K} - N_{1})} & \beta \rmJ_{N_{1} \times (\frac{N}{K} - N_{2})}  & \dots & \beta \rmJ_{N_{1} \times (\frac{N}{K}- N_{K})}  \\
    \beta \rmJ_{N_{2} \times (\frac{N}{K}- N_{1})} & \alpha \rmJ_{N_{2} \times (\frac{N}{K}- N_{2})} &\dots & \beta \rmJ_{N_{2} \times (\frac{N}{K}- N_{K})}  \\
    \vdots & \vdots &  \ddots & \vdots \\
    \beta  \rmJ_{N_{K} \times (\frac{N}{K}- N_{1})}  & \beta  \rmJ_{N_{K} \times (\frac{N}{K}- N_{2})}  & \dots & \alpha \rmJ_{N_{K} \times (\frac{N}{K}- N_{K})}
  \end{bmatrix}\,, 
\end{equation*}
where $\alpha$, $\beta$ are defined in \eqref{eqn:alpha_beta}. In the \textit{perfect} splitting case, the dimension of each block is $N/(2K)\times N/(2K)$ since $\E N_j = N/(2K)$ for all $j \in [K]$, and the expectation matrix $\overline{\rmB}$ can be written as
\begin{equation*}
    \overline{\rmB} \coloneqq \begin{bmatrix}
     \alpha \rmJ_{\frac{N}{2K}} & \beta \rmJ_{\frac{N}{2K}} & \dots & \beta \rmJ_{\frac{N}{2K}}  \\
    \beta \rmJ_{\frac{N}{2K}} & \alpha \rmJ_{\frac{N}{2K}} & \dots & \beta \rmJ_{\frac{N}{2K}}  \\
    \vdots & \vdots & \ddots & \vdots \\
    \beta \rmJ_{\frac{N}{2K}} & \beta \rmJ_{\frac{N}{2K}} &\dots & \alpha \rmJ_{\frac{N}{2K}}
  \end{bmatrix}\,.
\end{equation*}

In \Cref{alg:multi_block_spectral_partition}, $\gY_1$ is a random subset of $\gY$ obtained by selecting each element with probability $1/2$ independently, and $\gY_2 = \gY \setminus \gY_1$. Let $\widetilde{N}_{j}$ denote the number of vertices in $\gY_1\cap \gV_j$, then $\widetilde{N}_{j}$ can be written as a sum of independent Bernoulli random variables, 
\begin{align}\label{eqn:dimension_Y1capVi}
    \widetilde{N}_{j} = |\gY_1 \cap \gV_j| = \sum_{v\in \gV_j} \indi{v\in \gY_1}\,,
\end{align}
and $|\gY_2\cap \gV_j| = |\gV_j| - |\gZ \cap \gV_j| - |\gY_1\cap \gV_j| =  N/K  - N_j - \widetilde{N}_{j}$ for all $j \in [K]$. 

Similarly,  $\E \widetilde{N}_{j} = N/(4K)$, which means that this splitting is expected to be perfect. However, we still obtain \textit{imperfect} splitting in most cases due to the independence assumption.

\begin{remark}
In \cite{Chin2015StochasticBM}, the authors assumed the \textit{perfect} splitting throughout the proof, which is not necessarily true. Our work provides a rigorous analysis of the algorithm provided in \cite{Chin2015StochasticBM}, without the \textit{perfect} splitting assumption.
\end{remark}

\subsubsection*{Induced sub-hypergraph}\label{subsubsec:induced_sub_hypergraph}
For the non-uniform hpyergraph $\gH = (\gV, \gE)$,  consider the vertex subset $(\gY_1 \cup \gZ) \subset \gV$, which leads our attention to the corresponding induced sub-hypergraph. 

\begin{definition}[Induced sub-hypergraph]\label{def:induced sub-hypergraph}
Let $\gH = (\gV, \gE)$ be a non-uniform random hypergraph and $\gS\subset \gV$ be any subset of the vertices of $\gH$. Then the \textit{induced sub-hypergraph} $\gH[\gS]$ is the hypergraph whose vertex set is $\gS$ and whose hyperedge set $\gE[\gS]$ consists of all of the edges in $\gE$ that have all endpoints located in $\gS$.
\end{definition}

\begin{remark}
Note that $\gH$ can be considered as a collection of $\ell$-uniform hypergraphs for varying $\ell$, a.k.a., $\gH = \cup_{\ell \in \sL} \gH_{\ell}$. The decomposition works for $\gH[\gS]$ as well, a.k.a. $\gH[\gS] = \cup_{\ell \in \sL} \gH_{\ell}[\gS]$ and $\gE[\gS] = \cup_{\ell \in \sL} \gE_{\ell}[\gS]$
\end{remark}

Let $\gH[\gY_1 \cup \gZ]$(resp. $\gH[\gY_2 \cup \gZ]$) denote the induced sub-hypergraph on vertex set $\gY_1 \cup \gZ$ (resp. $\gY_2 \cup \gZ$), and $\rmB_1 \in \R^{|\gZ|\times |\gY_1|}$ (resp. $\rmB_2 \in \R^{|\gZ|\times |\gY_2|}$) denote the adjacency matrix corresponding to the sub-hypergraph, where the rows and columns of $\rmB_1$ (resp. $\rmB_2$) are corresponding to elements in $\gZ$ and $\gY_1$ (resp., $\gZ$ and $\gY_2$). Therefore,  $\rmB_1$ and $\rmB_2$ are parameterized by $\{\tA^{(\ell)}\}_{\ell \in \sL}$, $\{N_j\}_{j=1}^{K}$ and $\{\widetilde{N}_{j}\}_{j=1}^{K}$, and the entries in $\rmB_1$ are independent of the entries in $\rmB_2$, due to the independence of hyperedges. If we take expectation over $\{\tA^{(\ell)}\}_{\ell \in \sL}$ conditioning on $\{N_{j}\}_{j=1}^{K}$ and $\{\widetilde{N}_{j}\}_{j=1}^{K}$, then it gives rise to the expectation of the \textit{imperfect} splitting, denoted by $\widetilde{\rmB}_1$,
\begin{equation}\label{eqn:tilde_B1}
    \widetilde{\rmB}_1 \coloneqq \begin{bmatrix}
     \widetilde{\alpha}_{11} \rmJ_{N_{1} \times \widetilde{N}_{1}} & \dots & \widetilde{\beta}_{1k} \rmJ_{N_{1} \times \widetilde{N}_{K}}  \\
    \vdots & \ddots & \vdots \\
    \widetilde{\beta}_{k1}  \rmJ_{N_{K} \times \widetilde{N}_{1}}  & \dots & \widetilde{\alpha}_{kk} \rmJ_{N_{K} \times \widetilde{N}_{K}} 
  \end{bmatrix}\,,
\end{equation}
where for $1\leq i,  j \leq K$, the entries $\widetilde{\alpha}_{ij}$ and $\widetilde{\beta}_{ij}$ are defined as follows:
\begin{subequations}
\begin{align}
    \widetilde{\alpha}_{ii} \coloneqq&\, \sum_{\ell \in \sL}\left\{ \binom{N_i + \widetilde{N}_i - 2}{\ell - 2} \frac{a_{\ell} - b_{\ell}}{\binom{N - 1}{\ell-1}} + \binom{\sum_{k=1}^{K}(N_{k} + \widetilde{N}_{k}) - 2}{\ell - 2} \frac{b_{\ell}}{\binom{N - 1}{\ell-1}} \right\}\,,\label{eqn:tilde_alpha} \\
    \widetilde{\beta}_{ij} \coloneqq&\, \sum_{\ell \in \sL} \binom{\sum_{k=1}^{K}(N_{k} + \widetilde{N}_{k}) - 2}{\ell - 2}\frac{b_{\ell}}{\binom{N - 1}{\ell-1}}\,,\quad i\neq j, \label{eqn:tilde_beta}
\end{align}
\end{subequations}
The expectation of the \textit{perfect} splitting, denoted by $\overline{\rmB}_1$, can be written as
\begin{equation}\label{eqn:bar_B1}
    \overline{\rmB}_1 \coloneqq \begin{bmatrix}
     \overline{\alpha} \rmJ_{\frac{N}{2K} \times \frac{N}{4K}} & \overline{\beta} \rmJ_{\frac{N}{2K} \times \frac{N}{4K}} & \dots & \overline{\beta} \rmJ_{\frac{N}{2K} \times \frac{N}{4K}} \\
    \overline{\beta} \rmJ_{\frac{N}{2K} \times \frac{N}{4K}} & \overline{\alpha} \rmJ_{\frac{N}{2K} \times \frac{N}{4K}} & \dots & \overline{\beta} \rmJ_{\frac{N}{2K} \times \frac{N}{4K}}  \\
    \vdots & \vdots & \ddots & \vdots \\
    \overline{\beta} \rmJ_{\frac{N}{2K} \times \frac{N}{4K}} & \overline{\beta} \rmJ_{\frac{N}{2K} \times \frac{N}{4K}} &\dots & \overline{\alpha} \rmJ_{\frac{N}{2K} \times \frac{N}{4K}}
  \end{bmatrix}\,,
\end{equation}
where
\begin{align}\label{eqn:bar_alphabeta}
    \overline{\alpha} \coloneqq \sum_{\ell \in \sL}\left\{ \binom{ \frac{3N}{4K} - 2}{\ell - 2} \frac{a_{\ell} - b_{\ell}}{\binom{N - 1}{\ell-1}} + \binom{\frac{3N}{4} - 2}{\ell - 2}\frac{b_{\ell}}{\binom{N - 1}{\ell-1}} \right\}\,,\quad  \overline{\beta} \coloneqq \sum_{\ell \in \sL} \binom{\frac{3N}{4} - 2}{\ell - 2}\frac{b_{\ell}}{\binom{N - 1}{\ell-1}}\,.
\end{align}
The matrices $\widetilde{\rmB}_2, \overline{\rmB}_2$ can be defined similarly, since dimensions of $|\gY_2\cap \gV_j|$ are also determined by $N_j$ and $\widetilde{N}_{j}$. Obviously, $\overline{\rmB}_2 = \overline{\rmB}_1$ since $\E \widetilde{N}_{i} = \E (N/K - N_{j} - \widetilde{N}_j) = N/(4K)$ for all $j \in [K]$.

\subsubsection*{Fixing Dimensions}
The dimensions of $\widetilde{\rmB}_1$ and $\widetilde{\rmB}_2$, as well as blocks they consist of, are not deterministic---since $N_i$ and $\widetilde{N}_{j}$, defined in \eqref{eqn:dimension_ZcapVi} and \eqref{eqn:dimension_Y1capVi} respectively, are sums of independent random variables. As such, we cannot directly compare them. In order to overcome this difficulty, we embed $\rmB_1$ and $\rmB_2$ into the following $N \times N$ matrices:
\begin{equation}\label{eqn:A1A2}
    \rmA_{1}\coloneqq \begin{bmatrix}
     \bzero_{|\gZ|\times |\gZ|} & \rmB_1 & \bzero_{|\gZ|\times |\gY_2|} \\
     \bzero_{|\gY|\times |\gZ|} & \bzero_{|\gY|\times |\gY_1|}  & \bzero_{|\gY|\times |\gY_2|} 
  \end{bmatrix}\, ,\quad 
  \rmA_{2}\coloneqq \begin{bmatrix}
     \bzero_{|\gZ|\times |\gZ|} & \bzero_{|\gZ|\times |\gY_1|} & \rmB_2 \\
     \bzero_{|\gY|\times |\gZ|} & \bzero_{|\gY|\times |\gY_1|}  & \bzero_{|\gY|\times |\gY_2|} 
    \end{bmatrix}\,.
\end{equation}
Note that $\rmA_{1}$ and $\rmA_{2}$ have the same size. Also by definition, the entries in $\rmA_{1}$ are independent of the entries in $\rmA_{2}$. If we take expectation over $\{\tA^{(\ell)}\}_{\ell \in \sL}$ conditioning on $\{N_{j}\}_{j=1}^{K}$ and $\{\widetilde{N}_{j}\}_{j=1}^{K}$, then we obtain the expectation matrices of the \textit{imperfect} splitting, denoted by $\widetilde{\rmA}_1$(resp. $\widetilde{\rmA}_2$), written as
\begin{equation}\label{eqn:tilde_A1A2}
    \widetilde{\rmA}_1\coloneqq \begin{bmatrix}
     \bzero_{|\gZ|\times |\gZ|} & \widetilde{\rmB}_1 & \bzero_{|\gZ|\times |\gY_2|} \\
     \bzero_{|\gY|\times |\gZ|} & \bzero_{|\gY|\times |\gY_1|}  & \bzero_{|\gY|\times |\gY_2|} 
  \end{bmatrix}\, ,\quad 
  \widetilde{\rmA}_2\coloneqq \begin{bmatrix}
     \bzero_{|\gZ|\times |\gZ|} & \bzero_{|\gZ|\times |\gY_1|} & \widetilde{\rmB}_2 \\
     \bzero_{|\gY|\times |\gZ|} & \bzero_{|\gY|\times |\gY_1|}  & \bzero_{|\gY|\times |\gY_2|} 
    \end{bmatrix}\,.
\end{equation}
The expectation matrix of the \textit{perfect} splitting, denoted by $\overline{\rmA}_{1}$(resp. $\overline{\rmA}_{2}$), can be written as
\begin{equation}\label{eqn:bar_A1A2}
    \overline{\rmA}_{1}\coloneqq \begin{bmatrix}
     \bzero_{ \frac{N}{2} \times \frac{N}{2}} & \overline{\rmB}_1 & \bzero_{\frac{N}{2}\times \frac{N}{4}} \\
     \bzero_{\frac{N}{2}\times \frac{N}{2}} & \bzero_{\frac{N}{2}\times \frac{N}{4}}  & \bzero_{\frac{N}{2}\times \frac{N}{4}} 
  \end{bmatrix}\, ,\quad 
  \overline{\rmA}_{2}\coloneqq \begin{bmatrix}
     \bzero_{ \frac{N}{2} \times \frac{N}{2}} & \bzero_{\frac{N}{2}\times \frac{N}{4}} & \overline{\rmB}_2\\
     \bzero_{\frac{N}{2}\times \frac{N}{2}} & \bzero_{\frac{N}{2}\times \frac{N}{4}}  & \bzero_{\frac{N}{2}\times \frac{N}{4}} 
  \end{bmatrix}\,.
\end{equation}
Obviously, $\widetilde{\rmA}_i$ and $\widetilde{\rmB}_i$(resp. $\overline{\rmA}_i$ and $\overline{\rmB}_i$) have the same non-zero singular values for $i = 1, 2$. In the remaining of this section, we will deal with $\widetilde{\rmA}_i$ and $\overline{\rmA}_i$ instead of $\widetilde{\rmB}_i$ and $\overline{\rmB}_i$ for $i = 1, 2$.

\subsection{Spectral Partition: Proof of \Cref{lem:spectral_clustering_accuracy_multiple}}\label{subsec:spectral_partition_proof}

\subsubsection{Proof Outline}
Recall that $\rmA_{1}$ is defined as the adjacency matrix of the induced sub-hypergraph $\gH[\gY_1 \cup \gZ]$ in \Cref{subsubsec:induced_sub_hypergraph}. Consequently, the index set should contain information only from $\gH[\gY_1 \cup \gZ]$. Define the index sets
\begin{align}
 \sI = \Big\{i\in \gV: \mathrm{row}(i)\leq 20\LM d \Big\}, \quad 
    \sI_{1} = \Big\{i\in \gV: \mathrm{row}(i)\big|_{\gY_1 \cup \gZ} \leq 20\LM d \Big\}\,, \label{eqn:index_sets_I1}
\end{align}
where $d = \sum_{\ell \in \sL}(\ell - 1)a_{\ell}$, and $\mathrm{row}(i)\big|_{\gY_1 \cup \gZ}$ is the row sum of $i$ on $\gH[\gY_1 \cup \gZ]$. We say $\mathrm{row}(i)\big|_{\gY_1 \cup \gZ}=0$ if $i\not\in \gY_1 \cup \gZ$, and for vertex $i\in \gY_1 \cup \gZ$, and let
\begin{align*}
    \mathrm{row}(i)\Big|_{\gY_1 \cup \gZ} \coloneqq \sum_{j=1}^{N} \sum_{\ell \in \sL}\,\,\sum_{ \substack{e\in \gE_{\ell}[\gY_1 \cup \gZ]\\ \{i, j\}\subset e} } \,\, \etA_e^{(\ell)} = \sum_{\ell \in \sL}(\ell - 1)\sum_{ \substack{e\in \gE_{\ell}[\gY_1 \cup \gZ]\\ \{i, j\}\subset e} } \,\, \etA_e^{(\ell)}\,.
\end{align*}
As a result, the matrix $(\rmA_{1})_{\sI_1}$ is obtained by restricting $\rmA_{1}$ on index set $\sI_1$. The next $4$ steps guarantee that \Cref{alg:multi_block_spectral_partition} outputs a $\nu$-correct partition.
\begin{enumerate}[label=(\roman*)]
    \item Find the singular subspace $\rmU$ spanned by the first $K$ left singular vectors of $(\rmA_{1})_{\sI_1}$.
    \item Randomly pick $s = 2K\log^2(N)$ vertices from $\gY_2$ and denote the corresponding columns in $\rmA_{2}$ by $\rva_{i_1}$,$\ldots$, $\rva_{i_s}$. Project each vector $\rva_i - \overline{\rva}$ onto the singular subspace $\rmU$, with $\overline{\rva}\in \R^{N}$ defined by $\overline{\rva}(j) = \indi{j\in Z} \cdot (\overline{\alpha} + \overline{\beta})/2$, where $\overline{\alpha}$, $\overline{\beta}$ were defined in  \eqref{eqn:bar_alphabeta}.
    \item For each projected vector $\rmP_{\rmU}(\rva_i - \overline{\rva})$, identify the top $N/(2K)$ coordinates in value and place the corresponding vertices into a set $\widehat{\gU}^{(0)}_i$. Discard half of the obtained $s$ subsets, those with the lowest \emph{blue} edge densities. 
    \item Sort the remaining sets according to blue hyperedge density and identify $K$ distinct subsets $\widehat{\gU}^{(0)}_{1}$, $\ldots$, $\widehat{\gU}^{(0)}_{K}$ such that $|\widehat{\gU}^{(0)}_i \cap \widehat{\gU}^{(0)}_j| < \lceil(1 - \nu)N/K \rceil$ if $i\neq j$.
\end{enumerate}
Based on the $4$ steps above in \Cref{alg:multi_block_spectral_partition}, the proof of \Cref{lem:spectral_clustering_accuracy_multiple} is structured in $4$ parts.
\begin{enumerate}[label=(\roman*)]
    \item Let $\widetilde{\rmU}$ denote the subspace spanned by first $K$ left singular vectors of $\widetilde{\rmA}_1$ defined in \eqref{eqn:tilde_A1A2}. \Cref{subsubsec:bound_subspace_angle_multiple} shows that the subspace angle between $\rmU$ and $\widetilde{\rmU}$  is smaller than any $c\in (0, 1)$ as long as $a_{\ell}, b_{\ell}$ satisfy certain conditions depending on $c$.
    
    \item The vector $\widetilde{\rvdelta}_i$, defined in \eqref{eqn:tilde_delta}, reflects the underlying true partition $\gZ \cap \gV_{\pi(i)}$ for each $i\in [\gS]$, where $\pi(i)$ denotes the membership of vertex $i$. \Cref{subsubsec:bound_the_projection_error} shows that $\overline{\rvdelta}_i$, an approximation of $\widetilde{\rvdelta}_i$ defined in \eqref{eqn:bar_delta}, can be recovered by the projected vector $\rmP_{\rmU}(\rva_{i} - \overline{\rva})$, since projection error  $\|\rmP_{\rmU}(\rva_{i} - \overline{\rva}) - \overline{\rvdelta}_{i}\|_2 < c\|\overline{\rvdelta}_{i}\|_2$ for any $c\in (0, 1)$ if $a_{\ell}, b_{\ell}$ satisfy the desired property in part (i).
    
    \item \Cref{subsubsec:discard_half_blue_accuracy} indicates that the coincidence ratio between the remaining sets and the true partition is at least $\nu$, after discarding half of the sets with the lowest blue edge densities.
    
    \item \Cref{lem:sample_k_distinct} proves that we can find $K$ distinct subsets $\widehat{\gU}^{(0)}_i$ within $K\log^2(N)$ trials with high probability.
\end{enumerate}

\subsubsection{Bounding the angle between $\rmU$ and $\widetilde{\rmU}$}\label{subsubsec:bound_subspace_angle_multiple} The angle between subspaces $\rmU$ and $\widetilde{\rmU}$ is defined as \[\sin \angle (\rmU, \widetilde{\rmU}) \coloneqq \|\rmP_{\rmU} - \rmP_{\widetilde{\rmU}}\|.\] A natural idea is to apply Wedin's $\sin \Theta$ Theorem  (\Cref{lem:Wedin_sin}). \Cref{lem:singular_value_approximation} indicates that the difference  of $\sigma_{j}(\widetilde{\rmA}_1)$ and $\sigma_{j}(\overline{\rmA}_{1})$ is relatively small, compared to $\sigma_{j}(\overline{\rmA}_{1})$. 

\begin{lemma}\label{lem:singular_value_approximation}
Let $\sigma_{j}(\overline{\rmA}_{1})$(resp.  $\sigma_{j}(\widetilde{\rmA}_1)$) denote the singular values of $\overline{\rmA}_{1}$ (resp.  $\widetilde{\rmA}_1$) for all $j \in [K]$, where the matrices $\overline{\rmA}_{1}$ and $\widetilde{\rmA}_1$ are defined in \eqref{eqn:bar_A1A2} and \eqref{eqn:tilde_A1A2} respectively. Then
\begin{align*}
        \sigma_{1}(\overline{\rmA}_{1}) =&\,\frac{N\left[ \overline{\alpha} + (K-1)\overline{\beta} \right] }{2\sqrt{2}K}= \frac{N}{2\sqrt{2}K}\sum_{\ell \in \sL} \left[ \binom{\frac{3N}{4K} - 2}{\ell - 2} \frac{a_{\ell} - b_{\ell}}{ \binom{N - 1}{\ell-1}} + K\binom{\frac{3N}{4} - 2}{\ell - 2}\frac{b_{\ell}}{\binom{N - 1}{\ell-1}} \right] \,,\\
        \sigma_{j}(\overline{\rmA}_{1}) =&\, \frac{N( \overline{\alpha} - \overline{\beta})}{2\sqrt{2}K} = \frac{N}{2\sqrt{2}K}\sum_{\ell \in \sL} \binom{\frac{3N}{4K} - 2}{\ell - 2} \frac{a_{\ell} - b_{\ell}}{ \binom{N - 1}{\ell-1}}\,,\quad \quad \quad \quad 2 \leq j \leq K\,,\\
        \sigma_{j}(\overline{\rmA}_{1}) =&\, 0\,,\quad \quad \quad \quad \quad \quad \quad \quad \quad \quad \quad \quad \quad \quad \quad \quad \quad \quad \quad \quad \quad K+1 \leq j \leq N\,.&
\end{align*}
with $\overline{\alpha}$, $\overline{\beta}$ defined in \eqref{eqn:bar_alphabeta}. Moreover, with probability at least $1 - 2K\exp(-K\log^2(N))$, for each $j \in [K]$,
\begin{align*}
    \frac{|\sigma_{j}(\overline{\rmA}_{1}) - \sigma_{j}(\widetilde{\rmA}_1)|}{\sigma_{j}(\overline{\rmA}_{1})} = O\left(N^{-\frac{1}{4}}\log^{\frac{1}{2} }(N)\right).
\end{align*}
\end{lemma}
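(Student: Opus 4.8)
The plan is to prove the two assertions separately: the \emph{exact} singular values of $\overline{\bA}_1$ by a Kronecker‑product computation, and the approximation $\sigma_i(\widetilde{\bA}_1)\approx\sigma_i(\overline{\bA}_1)$ via Weyl's inequality after controlling the (random) block sizes.

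\emph{Exact spectrum.} Since $\overline{\bA}_1$ is $\overline{\bB}_1$ padded by zero rows and columns (see \eqref{eqn:bar_A1A2}), the two share the same nonzero singular values, so it suffices to diagonalize $\overline{\bB}_1$. Reading off \eqref{eqn:bar_B1}, I would write $\overline{\bB}_1=\mathbf{Q}\otimes\bJ_{\frac{n}{2k}\times\frac{n}{4k}}$ with $\mathbf{Q}:=(\overline{\alpha}-\overline{\beta})\bI_k+\overline{\beta}\bJ_k\in\R^{k\times k}$: the factor $\bI_k\otimes\bJ_{\frac{n}{2k}\times\frac{n}{4k}}$ supplies the diagonal blocks and $\bJ_k\otimes\bJ_{\frac{n}{2k}\times\frac{n}{4k}}$ the uniform background, so block $(i,i)$ equals $\overline{\alpha}\bJ$ and block $(i,j)$, $i\neq j$, equals $\overline{\beta}\bJ$, matching \eqref{eqn:bar_B1}. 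The singular values of a Kronecker product are the pairwise products of those of the factors; since $a_m\ge b_m\ge0$ forces $\overline{\alpha}\ge\overline{\beta}\ge0$, the matrix $\mathbf{Q}$ is positive semidefinite with eigenvalues (hence singular values) $\overline{\alpha}+(k-1)\overline{\beta}$ of multiplicity one and $\overline{\alpha}-\overline{\beta}$ of multiplicity $k-1$, while $\bJ_{\frac{n}{2k}\times\frac{n}{4k}}=\mathbf{1}\mathbf{1}^{\top}$ has the single nonzero singular value $\sqrt{\tfrac{n}{2k}\cdot\tfrac{n}{4k}}=\tfrac{n}{2\sqrt2 k}$. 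Multiplying yields the nonzero singular values $\tfrac{n}{2\sqrt2 k}(\overline{\alpha}+(k-1)\overline{\beta})$ and $\tfrac{n}{2\sqrt2 k}(\overline{\alpha}-\overline{\beta})$ with the stated multiplicities and $\sigma_i(\overline{\bA}_1)=0$ for $i>k$; substituting the definitions \eqref{eqn:bar_alphabeta} of $\overline{\alpha},\overline{\beta}$ reproduces the two displayed formulas.

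\emph{Controlling the block sizes.} For the perturbation part, note that $n_i=|Z\cap V_i|$ is a sum of $n/k$ i.i.d.\ $\mathrm{Bernoulli}(1/2)$ variables and $n_i'=|Y_1\cap V_i|$ a sum of $n/k$ i.i.d.\ $\mathrm{Bernoulli}(1/4)$ variables (a vertex of $V_i$ enters $Y_1$ iff it is first placed in $Y$ and then in $Y_1$). Hoeffding's inequality gives $\Pr[\,|n_i-\tfrac{n}{2k}|\ge\sqrt n\log n\,]\le 2e^{-2k\log^2 n}$ and the same for $n_i'$; a union bound over $i\in[k]$ produces an event $\mathcal{E}$ with $\Pr[\mathcal{E}]\ge 1-2k\exp(-k\log^2 n)$ (for $n$ large), on which $|n_i-\tfrac{n}{2k}|$, $|n_i'-\tfrac{n}{4k}|$, and hence $|n_i+n_i'-\tfrac{3n}{4k}|$ and $\big||Y_1\cup Z|-\tfrac{3n}{4}\big|$ are all $O(\sqrt n\log n)$. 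I work on $\mathcal{E}$ henceforth; on $\mathcal{E}$ the matrix $\widetilde{\bA}_1$ is, up to a relabeling of rows and columns, determined by $\{n_i\},\{n_i'\}$, so its singular values are deterministic functions of these counts.

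\emph{Perturbation via Weyl.} Permute the rows and columns of $\overline{\bA}_1$ so that its $k$ row‑blocks (size $\tfrac n{2k}$) and $k$ column‑blocks (size $\tfrac n{4k}$) begin at the same indices as the blocks of $\widetilde{\bA}_1$; call the result $\overline{\bA}_1^{\star}$, which has the same singular values as $\overline{\bA}_1$. Then $\widetilde{\bA}_1-\overline{\bA}_1^{\star}=\mathbf{E}_1+\mathbf{E}_2$, where $\mathbf{E}_1$ collects the in‑block discrepancies $\widetilde{\alpha}_{ii}-\overline{\alpha}$, $\widetilde{\beta}_{ij}-\overline{\beta}$, and $\mathbf{E}_2$ is supported on the boundary strips where the blocks of $\widetilde{\bA}_1$ and $\overline{\bA}_1^{\star}$ disagree because their sizes differ by $O(\sqrt n\log n)$. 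For $\mathbf{E}_1$: for each fixed $m$, $\binom{x}{m-2}$ is a polynomial of degree $m-2\le M-2$, so perturbing $x=\Theta(n)$ by $\delta=O(\sqrt n\log n)$ changes it by $O(\delta\,x^{m-3})=O(n^{m-5/2}\log n)$; dividing by $\binom{n}{m-1}=\Theta(n^{m-1})$ and summing over the finitely many $m\in\mathcal{M}$ gives $|\widetilde{\alpha}_{ii}-\overline{\alpha}|,|\widetilde{\beta}_{ij}-\overline{\beta}|=O(n^{-3/2}\log n)$, hence $\|\mathbf{E}_1\|\le O(n^{-3/2}\log n)\cdot\sqrt{|Z|\,|Y_1|}=O(n^{-1/2}\log n)$. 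For $\mathbf{E}_2$: its support is a union of $O(k)$ strips of width $O(\sqrt n\log n)$ inside an $O(n)\times O(n)$ block, with entries of size $O(\overline{\alpha})=O(1/n)$ (note $\overline{\alpha},\overline{\beta}=\Theta(1/n)$), so bounding each strip by the corresponding all‑ones block gives $\|\mathbf{E}_2\|=O(\tfrac1n)\cdot O(\sqrt{n\cdot\sqrt n\log n})=O(n^{-1/4}\log^{1/2}n)$. Thus $\|\widetilde{\bA}_1-\overline{\bA}_1^{\star}\|=O(n^{-1/4}\log^{1/2}n)$, and Weyl's inequality for singular values gives $|\sigma_i(\widetilde{\bA}_1)-\sigma_i(\overline{\bA}_1)|=|\sigma_i(\widetilde{\bA}_1)-\sigma_i(\overline{\bA}_1^{\star})|\le\|\widetilde{\bA}_1-\overline{\bA}_1^{\star}\|=O(n^{-1/4}\log^{1/2}n)$ for every $i$. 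Finally, from the first part $\sigma_i(\overline{\bA}_1)\ge\sigma_k(\overline{\bA}_1)=\tfrac{n}{2\sqrt2 k}(\overline{\alpha}-\overline{\beta})=\tfrac{1+o(1)}{2\sqrt2 k}\sum_{m\in\mathcal{M}}(m-1)\big(\tfrac{3}{4k}\big)^{m-2}(a_m-b_m)$ for $2\le i\le k$ (and $\sigma_1(\overline{\bA}_1)$ is at least as large), a strictly positive constant because $\sum_{m\in\mathcal{M}}(m-1)(a_m-b_m)>0$ under \eqref{eqn:SNRcondition}; dividing gives the claimed relative bound $O(n^{-1/4}\log^{1/2}n)$.

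\emph{Main obstacle.} The delicate step is the estimate of $\mathbf{E}_2$: one must choose the alignment $\overline{\bA}_1^{\star}$ so that the symmetric difference between the block supports of $\widetilde{\bA}_1$ and $\overline{\bA}_1^{\star}$ is genuinely confined to $O(1)$ strips of width $O(\sqrt n\log n)$, and then bound the spectral norm of a union of such strips — exactly the mechanism behind \Cref{lem:general_eigen_value_approximation}. The extra ingredient absent there, the in‑block term $\mathbf{E}_1$, is harmless since it is of strictly smaller order $O(n^{-1/2}\log n)$; the other thing to check is that the normalization by $\sigma_i(\overline{\bA}_1)\ge\mathrm{const}>0$ is legitimate, which is immediate from the hypothesis $\sum_{m\in\mathcal{M}}(m-1)(a_m-b_m)>0$ in \eqref{eqn:SNRcondition}.
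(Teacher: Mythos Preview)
Your proposal is correct and follows essentially the same approach as the paper: Hoeffding's inequality to control the random block sizes, then Weyl's inequality applied to the perturbation $\widetilde{\bA}_1-\overline{\bA}_1$. The paper uses a single Frobenius-norm bound rather than your $\mathbf{E}_1+\mathbf{E}_2$ split (and in fact leaves the in-block term $\mathbf{E}_1$ implicit), and it does not spell out the Kronecker-product identity for the exact spectrum, but these are presentational rather than substantive differences.
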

Therefore, with \Cref{lem:singular_value_approximation}, we can write
$
    \sigma_{j}(\widetilde{\rmA}_1) = \sigma_{j}(\overline{\rmA}_{1})(1 + o(1)).
$
Define $\rmE_{1} \coloneqq \rmA_{1} - \widetilde{\rmA}_1$ and its restriction on $\sI_{1}$ \eqref{eqn:index_sets_I1} as 
\begin{equation}\label{eqn:E1I_k}
    (\rmE_{1})_{\sI_{1}} \coloneqq (\rmA_{1} - \widetilde{\rmA}_1)_{\sI_{1}} = (\rmA_{1})_{\sI_{1}} - (\widetilde{\rmA}_1)_{\sI_{1}}\,,
\end{equation}
as well as $\rvdelta_{1}\coloneqq (\widetilde{\rmA}_1)_{\sI_{1}} - \widetilde{\rmA}_1$. Then $(\rmA_{1})_{\sI_{1}} - \widetilde{\rmA}_1$ is decomposed as
\begin{align*}
    (\rmA_{1})_{\sI_{1}} - \widetilde{\rmA}_1 = [(\rmA_{1})_{\sI_{1}} - (\widetilde{\rmA}_1)_{\sI_{1}}] + [(\widetilde{\rmA}_1)_{\sI_{1}} -  \widetilde{\rmA}_1 ] = (\rmE_{1})_{\sI_{1}} + \rvdelta_{1}\,.
\end{align*}

\begin{lemma}\label{lem:high_degree_vertices_k}
    Let $d = \sum_{\ell \in \sL} (\ell - 1)a_{\ell}$, where $\sL$ is obtained from \Cref{alg:parameter_preprocessing}. There exists a constant $\const_{1}\geq (2^{1/\LM } - 1)^{-1/3}$ such that if $d \geq \const_{1}$, then with probability at least $1 - \exp\left(- d^{-2}N/\LM  \right)$, no more than $d^{-3}N$ vertices have row sums greater than $20\LM d$.
\end{lemma}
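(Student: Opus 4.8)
The plan is to prove \Cref{lem:high_degree_vertices_k} via a first-moment estimate combined with an exponential concentration inequality, treating the family of "bad" (large row sum) vertices as a collection of weakly dependent indicator variables. First I would fix the threshold $t := 20\mathcal{M}_{\max}d$ and, for each $i \in [n]$, define $X_i := \mathbbm{1}\{\mathrm{row}(i)\big|_{Y_1 \cup Z} > t\}$. The quantity $\mathrm{row}(i)\big|_{Y_1 \cup Z}$ is a sum, over $m \in \mathcal{M}$, of $(m-1)$ times the number of $m$-hyperedges of the induced sub-hypergraph on $Y_1 \cup Z$ that contain $i$; conditioning on the vertex-splitting sizes $\{n_i\}, \{n'_i\}$, each such count is a sum of independent Bernoulli variables, and its expectation is at most (a constant times) $d$, since by definition $d = \sum_{m \in \mathcal M}(m-1)a_m$ dominates the per-vertex expected degree contributions after accounting for the $\binom{n}{m-1}$ normalization. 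A Chernoff/Bennett bound for sums of independent Bernoullis then gives $\Pr[X_i = 1] \le \exp(-c\, d)$ for a suitable constant $c$ once $t$ is a large enough multiple of $d$; here the factor $20\mathcal{M}_{\max}$ is chosen precisely so that $t$ exceeds the mean by enough to push the tail below, say, $d^{-5}$ (using $d \ge C_1$). Hence $\E\big[\sum_i X_i\big] \le n e^{-cd} \le n \cdot d^{-5} \ll d^{-3}n$ for $C_1$ large.

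Next I would upgrade this expectation bound to a high-probability bound. The variables $X_i$ are not independent — two vertices in a common hyperedge share that hyperedge's indicator — but the row sums depend on disjoint-enough sets of hyperedges that a bounded-differences or Azuma-type argument applies: changing the presence of a single hyperedge $e$ (of size $\le M$) alters at most $|e| \le M$ of the row sums, hence at most $M$ of the $X_i$. Revealing the hyperedges one at a time gives a martingale for $\sum_i X_i$ with bounded increments, and McDiarmid's inequality (or a direct Azuma bound) yields
\begin{align*}
    \Pr\Big[\sum_i X_i \ge \E\big[\textstyle\sum_i X_i\big] + \lambda\Big] \le \exp\!\Big(-\frac{c'\lambda^2}{M^2 \cdot (\text{number of potential hyperedges} \cdot \text{variance per edge})}\Big),
\end{align*}
and choosing $\lambda = \frac{1}{2}d^{-3}n$ makes the right-hand side at most $\exp(-d^{-2}n/\mathcal{M}_{\max})$ after bounding the effective variance sum by $O(nd)$ (total expected degree) — matching the probability claimed in the lemma. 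One must also handle the randomness in $\{n_i\},\{n'_i\}$; since those are sums of i.i.d. Bernoullis with the stated concentration, conditioning on the event that all $n_i, n'_i$ lie within $O(\sqrt{n}\log n)$ of their means (which holds with probability $1 - o(n^{-2})$) only perturbs the hyperedge probabilities by a $(1+o(1))$ factor and does not affect the leading-order estimates.

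The main obstacle I anticipate is getting the dependency structure right in the concentration step so that the exponent comes out as exactly $d^{-2}n/\mathcal{M}_{\max}$ rather than something weaker. A naive bounded-differences bound in terms of the number of hyperedges is far too lossy in the sparse regime; the fix is to use a variance-sensitive inequality (Bernstein-type for martingales, or Freedman's inequality) where the predictable quadratic variation is controlled by the total expected number of incident hyperedges, which is $\Theta(nd)$, together with the small per-vertex tail $e^{-cd}$ that makes $\E[\sum_i X_i]$ genuinely tiny. Alternatively — and this may be cleaner — one can bypass martingales entirely: bound $\Pr[\sum_i X_i \ge d^{-3}n]$ by a union bound over all $\binom{n}{d^{-3}n}$ candidate "bad sets" $S$ of the probability that every vertex in $S$ has large row sum, and exploit near-independence of the row sums of a sparse random hypergraph on a sparse vertex set $S$ (most vertices of $S$ see disjoint hyperedge neighborhoods); this turns the estimate into $\binom{n}{d^{-3}n}\big(e^{-cd}\big)^{\Omega(d^{-3}n)}$, and since $\binom{n}{d^{-3}n} \le \exp(d^{-3}n \log(ed^3))$ while $cd \cdot d^{-3}n = cd^{-2}n$ dominates $d^{-3}n\log(ed^3)$ for $d$ large, the product is $\le \exp(-d^{-2}n/\mathcal{M}_{\max})$ as required. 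Either route reduces the lemma to a careful but routine tail computation; I would present the union-bound version, as it most transparently produces the stated failure probability.
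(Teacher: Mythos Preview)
Your union-bound architecture is exactly the one the paper uses: bound
\[
\Pr\!\Big[\textstyle\sum_i X_i \ge d^{-3}n\Big]\;\le\;\binom{n}{d^{-3}n}\cdot \max_{|S|=d^{-3}n}\Pr\big[\text{every }v\in S\text{ has }\mathrm{row}(v)>20\mathcal{M}_{\max}d\big],
\]
and then balance the combinatorial factor against an exponential bound on the inner probability. The gap is in how you bound the inner probability. You claim it is at most $(e^{-cd})^{\Omega(|S|)}$ by ``near-independence'' of the row sums on a sparse set $S$, but this is not a step you can wave through: the row sums $\{\mathrm{row}(v)\}_{v\in S}$ share every hyperedge that meets $S$ in two or more vertices, and turning the heuristic ``most neighborhoods are disjoint'' into a rigorous factorization (or a usable conditional-independence argument) is nontrivial and would itself require a concentration/counting lemma.

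The paper avoids this dependency issue entirely by passing from the vertex-indexed indicators $X_i$ to a hyperedge-indexed quantity. If every $v\in S$ has $\mathrm{row}(v)\ge 20\mathcal{M}_{\max}d$ then
\[
\sum_{v\in S}\mathrm{row}(v)\;\le\;\sum_{m}(m-1)\Big(m\,|E_m(S)|+(m-1)\,|E_m(S,S^c)|\Big)\;\ge\;20\mathcal{M}_{\max}\,|S|\,d,
\]
where $E_m(S)$ and $E_m(S,S^c)$ count $m$-hyperedges fully inside $S$ and crossing $S$, respectively. The right-hand side forces at least one of two events: the weighted count of interior hyperedges exceeds a fixed multiple of its mean, or the weighted count of crossing hyperedges does. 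Each of these counts is a weighted sum of \emph{independent} Bernoulli variables (indexed by hyperedges, not vertices), with mean $O(|S|\,d)$ and variance $O(M^2|S|\,d)$, so Bernstein applied once to each gives $\exp(-c\,|S|\,d/M)=\exp(-c\,d^{-2}n/\mathcal{M}_{\max})$. This dominates the $\binom{n}{d^{-3}n}\le\exp(O(d^{-3}n\log d))$ factor for $d$ large, yielding the stated probability.

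So the fix is one idea: replace ``the vertices of $S$ have nearly independent row sums'' by ``the total row sum over $S$ is a single sum of independent hyperedge indicators.'' Your martingale route would run into the same obstacle in a different guise --- controlling the predictable quadratic variation of $\sum_i X_i$ requires understanding how revealing one hyperedge affects many threshold indicators simultaneously, which is no easier than the dependence problem above.
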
 
\Cref{lem:high_degree_vertices_k} shows that the number of high-degree vertices is relatively small. Consequently, \Cref{cor:norm_of_high_degree_vertices_multi_block} indicates $\|\rvdelta_1\| \leq \sqrt{d}$ with high probability.
\begin{corollary}\label{cor:norm_of_high_degree_vertices_multi_block}
Assume $d \geq \max\{\const_{1}, \sqrt{2}\}$, where $\const_{1}$ is the constant in \Cref{lem:high_degree_vertices_k}, then $\|\rvdelta_1\| \leq \sqrt{d}$ with probability at least $1 - \exp\left(- d^{-2}N/\LM  \right)$.
    \end{corollary}
\begin{proof}[Proof of \Cref{cor:norm_of_high_degree_vertices_multi_block}]
   Note that $N- |\sI| \leq d^{-3}N$ and $\sI \subset \sI_{1}$, then $N - |\sI_{1}| \leq d^{-3}N$.
    From \Cref{lem:high_degree_vertices_k}, there are at most $d^{-3} N$ vertices with row sum greater than $20\LM d$ in the adjacency matrix $\rmA_{1}$, then the matrix $\rvdelta_1 = (\widetilde{\rmA}_1)_{\sI_{1}} - \widetilde{\rmA}_1$ has at most $2d^{-3} N^{2}$ non-zero entries. Every entry of $\widetilde{\rmA}_1$ in \eqref{eqn:tilde_A1A2} is bounded by $\alpha$, then,
    \begin{subequations}
        \begin{align*}
           \|\rvdelta_1\| \leq &\,  \|\rvdelta_1 \|_{\frob} = \|(\widetilde{\rmA}_1)_{\sI_{1}} - \widetilde{\rmA}_1\|_{\frob} \\
           \leq &\, \sqrt{2 d^{-3}N^{2}} \, \alpha = N\sqrt{2 d^{-3}} \sum_{\ell \in \sL} \left[ \binom{ \frac{N}{K}-2}{\ell - 2} \frac{a_{\ell} - b_{\ell}}{\binom{N - 1}{\ell-1}}  + \binom{N - 2}{\ell - 2} \frac{b_{\ell}}{\binom{N - 1}{\ell-1}} \right]\\
           \leq &\, \sqrt{2d^{-3} } \sum_{\ell \in \sL} (\ell - 1)a_{\ell} \leq \sqrt{2d^{-1}} \leq \sqrt{d}\,.
        \end{align*}
    \end{subequations}
\end{proof}
Moreover, by taking $\tau=20 \LM$, $\theta =3$ in \Cref{thm:regularization_concentration_partial}, with probability at least $1 - N^{-2}$
\begin{align}
    \|(\rmE_{1})_{\sI_{1}}\|\leq \const_{3} \sqrt{d}\,,\label{eq:E1I1}
\end{align}
where constant $\const_{3} >0$ depends on $\LM$. Together with upper bounds for $\|(\rmE_{1})_{\sI_{1}}\|$ and $\|\rvdelta_1\|$, \Cref{lem:subspace_angle_multi} shows that the angle between $\rmU$ and $\widetilde{\rmU}$ is relatively small with high probability.

\begin{lemma}\label{lem:subspace_angle_multi}
    For any $c\in (0,1)$, there exists some constant $\const_{2} >0$ such that, if
  \[
    \sum_{\ell \in \sL}(\ell - 1)(a_{\ell}-b_{\ell})\geq \const_{2} K^{\LM  - 1}\sqrt{d}\,,
  \]
then $\sin \angle (\rmU, \widetilde{\rmU}) \leq c$ with probability $1 - N^{-2}$. Here $\angle(\rmU, \widetilde{\rmU})$ is the angle between $\rmU$ and $\widetilde{\rmU}$. 
\end{lemma}

\begin{proof}[Proof of \Cref{lem:subspace_angle_multi}] 
From \eqref{eq:E1I1} and \Cref{cor:norm_of_high_degree_vertices_multi_block}, with probability at least $1 - N^{-2}$,
\begin{align}
     \|(\rmA_{1})_{\sI_{1}} - \widetilde{\rmA}_1\| \leq  \|(\rmE_{1})_{\sI_{1}}\| + \|\rvdelta_1 \|\leq (\const_{3} + 1)\sqrt{d}. \notag 
\end{align}
Since $\sigma_{K+1}(\widetilde{\rmA}_1)=0$, using \Cref{lem:singular_value_approximation} to approximate $\sigma_{K}(\widetilde{\rmA}_1)$, we obtain
\begin{align*}
    &\,\sigma_{K}(\widetilde{\rmA}_1) - \sigma_{K+1}(\widetilde{\rmA}_1) =\sigma_{K}(\widetilde{\rmA}_1)=\big( 1 + o(1) \big)\cdot \sigma_{K}(\overline{\rmA}_{1}) \geq \frac{1}{2}\sigma_{K}(\overline{\rmA}_{1})\\
    \geq &\, \frac{N}{4\sqrt{2}K}\sum_{\ell \in \sL} { \frac{3N}{4K} -2 \choose \ell - 2} \frac{a_{\ell} - b_{\ell}}{\binom{N - 1}{\ell -1}} \geq \frac{1}{8K}\sum_{\ell \in \sL} \bigg(\frac{3}{4K} \bigg)^{\ell - 2}(\ell - 1)(a_{\ell}-b_{\ell})\\
    \geq &\, \frac{1}{8K}\left(\frac{1}{2K}\right)^{\LM  - 2}\sum_{\ell \in \sL} (\ell - 1)(a_{\ell}-b_{\ell})\geq \frac{\const_{2} \sqrt{d}}{2^{\LM  + 1}}\,.
\end{align*}
Then for any $c\in(0, 1)$, we can find $\const_{2} = [2^{\LM  + 2}(\const_{3} + 1)/c]$ such that  $\|(\rmA_{1})_{\sI_{1}} - \widetilde{\rmA}_1\|\leq (1-1/\sqrt{2}) \sigma_{K}(\widetilde{\rmA}_1) 
$. By Wedin's Theorem (\Cref{lem:Wedin_sin}), the angle $\angle (\rmU, \widetilde{\rmU})$ is bounded by
    \begin{align}
        \sin \angle (\rmU, \widetilde{\rmU}) \coloneqq \|\rmP_{\rmU} - \rmP_{\widetilde{\rmU}}\| \leq &\, \frac{\sqrt{2}\|(\rmA_{1})_{\sI_{1}} - \widetilde{\rmA}_1\|}{ \sigma_{K}(\widetilde{\rmA}_1)} \leq \frac{\sqrt{2}(\const_{3}+1)\sqrt{d}}{\const_{2}\sqrt{d}/ 2^{\LM  + 1}}
        = \frac{\sqrt{2}}{2}c< c\,. \notag 
    \end{align}
\end{proof}

\subsubsection{Bound the projection error}\label{subsubsec:bound_the_projection_error}

Randomly pick $s=2K\log^2(N)$ vertices from $\gY_2$. 
Let $\rva_{i_1},\dots, \rva_{i_s}$, $\widetilde{\rva}_{i_1}$,$\dots$, $\widetilde{\rva}_{i_s}$, $\overline{\rva}_{i_1},\dots, \overline{\rva}_{i_s}$ and $\rve_{i_1}, \dots, \rve_{i_s}$ be the corresponding columns of $\rmA_{2}$, $\widetilde{\rmA}_2$, $\overline{\rmA}_{2}$ and $\rmE_2 \coloneqq \rmA_{2} - \widetilde{\rmA}_2$ respectively, where $\rmA_{2}$, $\widetilde{\rmA}_2$ and $\overline{\rmA}_{2}$ were defined in \eqref{eqn:A1A2}, \eqref{eqn:tilde_A1A2} and \eqref{eqn:bar_A1A2}. Let $\pi(i)$ denote the membership of vertex $i$. Note that entries of vector $\widetilde{\rva}_{i}$ are $\widetilde{\alpha}_{ii}$, $\widetilde{\beta}_{ij}$ or $0$, according to the membership of vertices in $\gZ$, where $\widetilde{\alpha}_{ii}$, $\widetilde{\beta}_{ij}$ were defined in \eqref{eqn:tilde_alpha}, \eqref{eqn:tilde_beta}. Then the corresponding vector $\widetilde{\rvdelta}_i \in \R^{N}$ with the entries given by
\begin{equation}\label{eqn:tilde_delta}
    \widetilde{\rva}_{i}(j) =
        \begin{cases}
            \widetilde{\alpha}_{ii}, & \textnormal{if } j\in \gZ \cap \gV_{\pi(i)}\\
            \widetilde{\beta}_{ij}, & \textnormal{if } j\in \gZ \setminus  \gV_{\pi(i)} \\
            0\,, & \textnormal{if } j\in \gY
        \end{cases}\,,\quad \widetilde{\rvdelta}_i(j) =
        \begin{cases}
            (\widetilde{\alpha}_{ii} - \widetilde{\beta}_{ij})/2 >0, & \textnormal{if } j\in \gZ \cap \gV_{\pi(i)} \\
            (\widetilde{\beta}_{ij} - \widetilde{\alpha}_{ii})/2 <0, & \textnormal{if } j\in \gZ \setminus \gV_{\pi(i)} \\
            0, & \textnormal{if } j\in \gY
        \end{cases},
\end{equation}
can be used to recover the vertex set $\gZ \cap \gV_{\pi(i)}$ based on the sign of elements in $\widetilde{\rvdelta}_i$. However, it is hard to handle with $\widetilde{\rvdelta}_i$ due to the randomness of $\widetilde{\alpha}_{ii}$, $\widetilde{\beta}_{ij}$ originated from $N_i$ and $\widetilde{N}_{j}$. Note that $N_i$ and $\widetilde{N}_{j}$ concentrate around $N/(2K)$ and $N/(4K)$ respectively as shown in \Cref{lem:singular_value_approximation}. Thus a good approximation of $\widetilde{\rvdelta}_i$, which rules out randomness of $N_i$ and $\widetilde{N}_{j}$, was given by $\overline{\rvdelta}_i \coloneqq \overline{\rva}_{i} - \overline{\rva}$, with entries given by $\overline{\rva}(j)\coloneqq \indi{j\in Z}\cdot (\overline{\alpha} + \overline{\beta})/2$, where $\overline{\alpha}$ and $\overline{\beta}$ were defined in \eqref{eqn:bar_alphabeta}, and
\begin{equation}\label{eqn:bar_delta}
\overline{\rva}_i (j) =
        \begin{cases}
            \overline{\alpha},  \quad &\textnormal{if } j\in \gZ \cap \gV_{\pi(i)} \\
            \overline{\beta}, \quad &\textnormal{if } j\in \gZ \setminus \gV_{\pi(i)} \\
            0, \quad &\textnormal{if } j\in \gY
        \end{cases} \,,\quad \overline{\rvdelta}_i(j) =
        \begin{cases}
            (\overline{\alpha} - \overline{\beta})/2 >0, \quad & \textnormal{if } j\in \gZ \cap \gV_{\pi(i)} \\
            (\overline{\beta} - \overline{\alpha})/2 <0, \quad &\textnormal{if } j\in \gZ \setminus \gV_{\pi(i)} \\
            0,  \quad &\textnormal{if } j\in \gY
        \end{cases} \,. 
\end{equation}
By construction, $\overline{\rvdelta}_i$ identifies vertex set $Z \cap \gV_{\pi(i)}$ in the case of \textit{perfect splitting} for any $i\in\{i_1, \cdots, i_s\}\cap \gY_2\cap \gV_{\pi(i)}$. However, the access to $\overline{\rvdelta}_i$ is limited in practice, thus the projection $\rmP_{\rmU}(\rva_i - \overline{\rva})$ is used instead as an approximation of $\overline{\rvdelta}_i$. 
\Cref{lem:projection_error_multi} proves that at least half of the projected vectors have small projection errors.

\begin{lemma}
\label{lem:projection_error_multi}
For any $c\in (0, 1)$, there exist constants $\const_{1}$ and $\const_{2}$ such that if $d> \const_{1}$ and \[\sum_{\ell \in \sL}(\ell - 1)(a_{\ell} -b_{\ell}) > \const_{2} K^{\LM }\sqrt{d},\] then among all projected vectors $\rmP_{\rmU}(\rva_{i} - \overline{\rva})$ for $i\in\{i_1, \cdots, i_s\}\cap \gY_2$, with probability $1 - O(N^{-K})$, at least half of them satisfy 
\begin{align}\label{eqn:good_vector}
    \|\rmP_{\rmU}(\rva_{i} - \overline{\rva}) - \overline{\rvdelta}_{i}\|_2 < c \, \|\overline{\rvdelta}_{i}\|_2.
\end{align}
\end{lemma}
\begin{proof}[Proof \Cref{lem:projection_error_multi}]
Note that $\overline{\rvdelta}_i = \rmP_{\overline{\rmU}}\overline{\rvdelta}_i$, where $\overline{\rmU}$ is spanned by the first $K$ left singular vectors of $\overline{\rmA}_{1}$ with $\textnormal{rank}(\overline{\rmA}_{1}) = K$, and $\overline{\rmA}_{1}$, $\overline{\rmA}_{2}$ preserve the same eigen-subspace. 
The approximation error between $ \rmP_{\rmU}(\rva_i - \overline{\rva})$ and $\overline{\rvdelta}_i$ can be decomposed as
\begin{subequations}
	\begin{align*}
    \rmP_{\rmU}(\rva_i - \overline{\rva}) -  \overline{\rvdelta}_i  =\,& \rmP_{\rmU}\big[ (\rva_i - \widetilde{\rva}_i) + (\widetilde{\rva}_i - \overline{\rva}_{i}) + (\overline{\rva}_i - \overline{\rva}) \big] - \rmP_{\overline{\rmU}}\overline{\rvdelta}_i\\
    =\,& \rmP_{\rmU}\rve_{i} + \rmP_{\rmU}(\widetilde{\rva}_i - \overline{\rva}_{i}) + (\rmP_{\rmU} - \rmP_{\overline{\rmU}})\overline{\rvdelta}_i~.
	\end{align*}
\end{subequations}
Then by triangle inequality, 
\begin{align*}
   \| \rmP_{\rmU}(\rva_i - \overline{\rva}) -  \overline{\rvdelta}_i\|_2 \leq \|\rmP_{\rmU}\rve_{i}\|_2 + \|\rmP_{\rmU}(\widetilde{\rva}_i - \overline{\rva}_{i})\|_2 + \|\rmP_{\rmU} - \rmP_{\overline{\rmU}}\|\cdot \|\overline{\rvdelta}_i\|_{2}~.
\end{align*}
Note that $\|\overline{\rvdelta}_i\| = O(N^{-\frac{1}{2} })$ and $\widetilde{N}_{i}$ concentrates around $N/(4K)$ for each $i \in [K]$ with deviation at most $\sqrt{N}\log(N)$, then by definitions of $\overline{\alpha}$ and $\overline{\beta}$ in \eqref{eqn:bar_alphabeta},
\begin{align*}
 \|\rmP_{\rmU}(\widetilde{\rva}_i - \overline{\rva}_{i})\|_2 \leq \|\widetilde{\rva}_i - \overline{\rva}_{i}\|_2 = O\Big(\big[ K\sqrt{N}\log(N)(\overline{\alpha} -\overline{\beta})^2 \big]^{\frac{1}{2}} \Big) = O[ N^{-\frac{3}{4}}\log^{\frac{1}{2}}(N)]=o(\|\overline{\rvdelta}_i\|_2).
\end{align*}
Meanwhile, by an argument similar to \Cref{lem:subspace_angle_multi}, it can be proved that $\sin \angle (\rmU, \overline{\rmU}) < c/2$ for any $c\in (0, 1)$, if constants $\const_{1}, \const_{2}$ are chosen properly, hence $\|\rmP_{\rmU} - \rmP_{\overline{\rmU}}\|\cdot \|\overline{\rvdelta}_i\|_{2}< \frac{c}{2} \|\overline{\rvdelta}_i\|_{2}$. \Cref{lem:projection_deviation_multiple} shows that at least half of the indices from $\{i_1, \cdots, i_s\}\cap \gY_2$ satisfy $\|\rmP_{\rmU}\rve_i \|_2 < \frac{c}{2}\|\overline{\rvdelta}_{i}\|_2$, which completes the proof.
\end{proof}
\begin{lemma}\label{lem:projection_deviation_multiple}
Let $d = \sum_{\ell \in \LM } (\ell - 1)a_{\ell}$. For any $c\in(0, 1)$, with probability $1 - O(N^{-K\log(N)})$, at least $\frac{s}{2}$ of the vectors $\rve_{i_1}, \dots, \rve_{i_s}$ satisfy
    \begin{align*}
       \|\rmP_{\rmU}\rve_i \|_2 \leq 2\sqrt{dK(\LM  + 2)/N} < \frac{c}{2}\|\overline{\rvdelta}_{i}\|_2 \,, \quad i\in \{ i_1, \cdots, i_s \} \subset \gY_2\,.
    \end{align*}
\end{lemma}

\begin{definition}
The vector $\rva_i$ satisfying \eqref{eqn:good_vector} is referred as \textit{good} vector. The index of the good vector is hence referred to as a good vertex.
\end{definition}
To avoid introducing extra notations, let $\rva_{i_1}, \dots, \rva_{i_{s_1}}$ denote \textit{good} vectors with $i_1, \cdots, i_{s_1}$ denoting good indices. \Cref{lem:projection_deviation_multiple} indicates that the number of good vectors is at least $s_1\geq \frac{s}{2} = K\log^2(N)$.  

\subsubsection{Accuracy}\label{subsubsec:discard_half_blue_accuracy} 
We are going to prove the accuracy of the initial partition obtained from \Cref{alg:multi_block_spectral_partition}. Lemmas \ref{lem:angle_accuracy_multiple}, \ref{lem:discard_half_blue} and \ref{lem:sample_k_distinct} are crucial in proving our results. We present the proof logic first and defer the Lemma statements later. 

For each projected vector $\rmP_{\rmU}(\rva_{i} - \overline{\rva})$, let $\widehat{\gU}^{(0)}_{i}$ denote the set of its largest $\frac{N}{2K}$ coordinates, where $i\in \{i_1, \cdots, i_s\}$ and $s = 2K \log^2(N)$. Note that vector $\overline{\rvdelta}_{i_j}$ in \eqref{eqn:tilde_delta} only identifies blocks $\gV_{\pi(i_j)}$ and $\gV\setminus \gV_{\pi(i_j)}$, which can be regarded as clustering two blocks with different sizes. By \Cref{lem:projection_error_multi}, good vectors satisfy $\|\rmP_{\rmU}(\rva_{i_j} - \overline{\rva}) - \overline{\rvdelta}_{i_j}\|_2 < c \, \|\overline{\rvdelta}_{i_j}\|_2$ for any $c\in (0, 1)$. Then by \Cref{lem:angle_accuracy_multiple} (after proper normalization), for a good index $i_j$, the number of vertices in $\widehat{\gU}^{(0)}_{i_j}$ clustered correctly is at least $(1 - \frac{4K}{3}c^{2})\frac{N}{K}$. By choosing $c = \sqrt{3(1-\nu)/(8K)}$, the condition $|\widehat{\gU}^{(0)}_{i_j} \cap \gV_i| > (1+\nu)/2 |\widehat{\gU}^{(0)}_{i_j}|$ in part (ii) of \Cref{lem:discard_half_blue} is satisfied. In \Cref{lem:subspace_angle_multi}, we choose 
\begin{align}
    \const_{2} = 2^{\LM  + 2}(\const_{3} + 1)/c = 2^{\LM  + 2}(\const_{3} + 1)\sqrt{8K/(3-3\nu)}\,\,, \label{eqn:C2}
\end{align}
where $\const_{3}$ defined in \eqref{eq:E1I1}. Hence, with high probability, all good vectors have at least $\mu_{\mathrm{T}}$ blue hyperedges (we call this \textit{``high blue hyperedge density"}). From \Cref{lem:projection_deviation_multiple}, at least half of the selected vectors are good. Then, in \Cref{alg:multi_block_spectral_partition}, throwing out half of the obtained sets $\widehat{\gU}^{(0)}_{i}$ (those with the lowest blue hyperedge density) guarantees that the remaining sets are good. 

Recall that, by choosing constant appropriately, we can make the subspace angle $\sin \angle (\rmU, \overline{\rmU}) < c$ for any $c\in (0, 1)$ ($\overline{\rmU}$ is spanned by the first $K$ left singular vectors of $\overline{\rmA}_{1}$). Then for vectors $\overline{\rvdelta}_{i_1}, \cdots, \overline{\rvdelta}_{i_{K}}$ with each $i_j$ selected from different vertex set $\gV_j$, there is a vector $\rmP_{\rmU}(\rva_{i_j} - \overline{\rva})$ in $\rmU$ arbitrarily close to $\overline{\rvdelta}_{i_j}$, which was proved by \Cref{lem:projection_error_multi}. From (i) of \Cref{lem:discard_half_blue}, so obtained $\widehat{\gU}_{i_j}^{(0)}$ must satisfy $|\widehat{\gU}_{i_j}^{(0)}\cap \gV_j|\geq \nu|\widehat{\gU}_{i_j}^{(0)}|$ for each $j\in [K]$. The remaining thing is to select $K$ different $\widehat{\gU}_{i_j}^{(0)}$ with each of them concentrating around distinct $\gV_j$ for each $j\in[K]$. This problem is equivalent to finding $K$ vertices in $\gY_2$, each from a different partition class, which can be done with $K\log^2(N)$ samplings as shown in \Cref{lem:sample_k_distinct}.


To summarize,  this section is a more precise and quantitative version of  the following argument:  with high probability, the following holds for the good vectors $\rva_{i_j}$, $j\in[K]$: 
\begin{align*}
    \left\{ i_j : \exists i~\mbox{s.t.}~|\widehat{\gU}_{i_j}^{(0)} \cap \gV_i| \geq \frac{1+\nu}{2} \frac{N}{K}\right\} &\, \subset \left\{ i_j : \widehat{\gU}_{i_j}^{(0)} \text{ has more than $\mu_T$ blue hyperedges}\right\} \\
    &\, \subset \left\{ i_j:  \exists i~\mbox{s.t.}~|\widehat{\gU}_{i_j}^{(0)} \cap \gV_i| \geq \nu \frac{N}{K}\right\} ~.
\end{align*}

\begin{lemma}[Adapted from Lemma $23$ in \cite{Chin2015StochasticBM}]\label{lem:angle_accuracy_multiple}
    Assume $N/K \in \N$. Let $\rvv, \overline{\rvv} \in \R^{N}$  be two unit vectors, where $\overline{\rvv}$ has $\frac{N}{K}$ of its entries being $\frac{1}{\sqrt{N}}$ with the rest being $-\frac{1}{\sqrt{N}}$. If $\sin \angle(\overline{\rvv}, \rvv) < c \leq 0.5$, then $\rvv$ contains at least $(1 - \frac{4K}{3}c^{2}) \frac{N}{K}$ positive entries $\ervv_i$ such that the corresponding entries $\overline{\ervv}_{i}$ are positive as well.
\end{lemma}

\begin{lemma}\label{lem:discard_half_blue} 
    For the set $\gX \subset \gZ$ with size $|\gX| = N/(2K)$. Define
\begin{subequations}
\begin{align*}
    \mu_1\coloneqq&\,\frac{1}{2} \sum_{\ell \in \sL}\ell(\ell - 1) \left\{ \left[ \binom{\frac{\nu N}{2K}}{\ell} + \binom{\frac{(1 - \nu)N}{2K}}{\ell} \right] \frac{a_{\ell} - b_{\ell} }{ \binom{N - 1}{\ell-1} } + \binom{\frac{N}{2K}}{\ell} \frac{b_{\ell}}{ \binom{N - 1}{\ell-1} } \right\}\,,\\
    \mu_{2} \coloneqq&\, \frac{1}{2} \sum_{\ell \in \sL} \ell(\ell - 1)\left\{ \left[ \binom{\frac{(1 + \nu) N}{4K} }{\ell} + (K-1)\binom{ \frac{(1 - \nu)N}{4K(K-1)}}{\ell} \right] \frac{a_{\ell} - b_{\ell} }{ \binom{N - 1}{\ell-1} } + \binom{\frac{N}{2K}}{\ell} \frac{b_{\ell}}{ \binom{N - 1}{\ell-1} } \right\}\,,
\end{align*}
\end{subequations}
and let $\mu_{\mathrm{T}} \coloneqq (\mu_1 + \mu_2)/2 \in [\mu_1, \mu_2]$. There exists some constant $c>0$ depending on $K$, $\{a_{\ell}\}_{\ell\in \sL}$ and $\nu$ such that for sufficiently large $N$,
    \begin{enumerate}[label=(\roman*)]
        \item  
        If $|\gX \cap \gV_j| \leq \nu |\gX|$ for each $j\in[K]$, then with probability $1 - e^{-cN}$, the number of blue hyperedges in the hypergraph induced by $\gX$ is at most $\mu_{\mathrm{T}}$.

        \item Conversely, if $|\gX \cap \gV_j| \geq \frac{1+ \nu}{2}|\gX|$ for some $j\in [K]$, then with probability $1 - e^{-cN}$, the number of blue hyperedges in the hypergraph induced by $\gX$ is at least $\mu_{\mathrm{T}}$.
    \end{enumerate}
\end{lemma}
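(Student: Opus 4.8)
The plan is to express the ``blue hyperedge density'' of $X$ as a sum of independent bounded random variables whose mean depends only on the block profile $\big(|X\cap V_1|,\dots,|X\cap V_k|\big)$ of $X$, to extremize that mean over the two families of profiles allowed in (i) and (ii), and then to close the gap with a Bernstein bound. Throughout, $X$ is a fixed subset of $Z$ of size $|X|=n/(2k)$, and in the application it is produced only from the \emph{red} hyperedges, hence is independent of the blue ones, so treating it as fixed is legitimate.

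First I would set $b(i):=|X\cap V_i|$, so $\sum_{i=1}^k b(i)=|X|$, and let $N(X)$ be the statistic the lemma calls the number of blue hyperedges in $H[X]$; from the $m(m-1)$ weights in the definitions of $\mu_1,\mu_2$ one reads off that each $m$-hyperedge inside $X$ is counted with weight $m(m-1)$ (equivalently, $N(X)$ is the restriction of $\mathbf 1_X^{\top}\bA\mathbf 1_X$ to blue hyperedges). Since an $m$-subset of $X$ contained in a single block is present with probability $a_m/\binom{n}{m-1}$, is otherwise present with probability $b_m/\binom{n}{m-1}$, and is then coloured blue with probability $1/2$ independently,
\[
\E N(X)=\tfrac12\sum_{m\in\mathcal M}m(m-1)\Big[\sum_{i=1}^{k}\binom{b(i)}{m}\tfrac{a_m-b_m}{\binom{n}{m-1}}+\binom{|X|}{m}\tfrac{b_m}{\binom{n}{m-1}}\Big]=:\phi\big(b(1),\dots,b(k)\big).
\]
Because $a_m\ge b_m$, the only profile-dependent part of $\phi$ is $\sum_i\binom{b(i)}{m}$, and $d\mapsto\binom{d}{m}$ is (discretely) convex on $\mathbb{Z}_{\ge 0}$ since its increments $\binom{d}{m-1}$ are nondecreasing; so (i) and (ii) reduce to maximizing, respectively minimizing, $\sum_i\binom{b(i)}{m}$ over the relevant polytope of integer profiles.

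Next I would carry out the two extremizations. For (i) the profile lies in $\{x\in\mathbb{Z}_{\ge 0}^k:\sum x_i=|X|,\ x_i\le \nu|X|\ \forall i\}$; a convex function is maximized over this polytope at a vertex, and since $\nu\in(0.5,1)$ at most one coordinate of a vertex can equal $\nu|X|$, after which the leftover mass $(1-\nu)|X|<\nu|X|$ is forced into a single other coordinate, so every vertex is a permutation of $(\nu|X|,(1-\nu)|X|,0,\dots,0)$; hence $\sum_i\binom{b(i)}{m}\le\binom{\nu|X|}{m}+\binom{(1-\nu)|X|}{m}$ and, with $|X|=n/(2k)$, $\E N(X)\le\mu_1$. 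For (ii), fix $i_0$ with $b(i_0)\ge\tfrac12(1+\nu)|X|=:a$; by convexity (Jensen) the minimizer has $b(i)$ equal for $i\neq i_0$, and then a direct computation shows $s\mapsto\binom{s}{m}+(k-1)\binom{(|X|-s)/(k-1)}{m}$ is nondecreasing for $s\ge a$ (using $a\ge|X|/k$ for $k\ge 2$), so the minimum over $s\ge a$ is at $s=a$, which is exactly the profile in the definition of $\mu_2$; hence $\E N(X)\ge\mu_2$. Finally I would check $\mu_{\mathrm{T}}=(\mu_1+\mu_2)/2\in[\mu_1,\mu_2]$ with a linear-in-$n$ gap: for $n$ large and $m\ge 2$ the mean value theorem gives $\binom{\frac{1+\nu}{2}|X|}{m}-\binom{\nu|X|}{m}\ge \tfrac{m}{m!}(\nu|X|)^{m-1}\cdot\tfrac{(1-\nu)|X|}{2}(1+o(1))\ge\binom{(1-\nu)|X|}{m}$, using $\tfrac m2\big(\tfrac{\nu}{1-\nu}\big)^{m-1}\ge 1$; therefore $\mu_1\le\mu_2$ and $\mu_2-\mu_1\ge\tfrac12\sum_{m\in\mathcal M}m(m-1)(k-1)\binom{\frac{1-\nu}{2(k-1)}|X|}{m}\tfrac{a_m-b_m}{\binom{n}{m-1}}=\Theta(n)$, which is strictly positive whenever some $a_m>b_m$ with $m\in\mathcal M$ (true for the SNR-optimal $\mathcal M$ unless $H$ is trivial).

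Finally I would establish concentration: $N(X)=\sum_{m\in\mathcal M}\sum_{e\subset X,\,|e|=m}m(m-1)\,\mathbbm{1}\{e\in E_m,\ e\ \text{blue}\}$ is a sum of independent random variables bounded by $\mathcal M_{\max}^2$, with total variance at most $\mathcal M_{\max}^2\,\E N(X)=\Theta(n)$, so Bernstein's inequality with $t=(\mu_2-\mu_1)/2=\Theta(n)$ gives
\[
\Pr\big[\,|N(X)-\E N(X)|\ge t\,\big]\le 2\exp\!\Big(-\tfrac{t^2/2}{\Theta(n)+\mathcal M_{\max}^2 t/3}\Big)\le e^{-cn}
\]
for some $c=c(k,\{a_m\},\nu)>0$. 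On the complement, in case (i) $N(X)\le\E N(X)+t\le\mu_1+\tfrac{\mu_2-\mu_1}{2}=\mu_{\mathrm{T}}$, and in case (ii) $N(X)\ge\E N(X)-t\ge\mu_2-\tfrac{\mu_2-\mu_1}{2}=\mu_{\mathrm{T}}$, which is the claim; the sharper failure rate flagged in \Cref{rem:SNRlogn} and used in \Cref{cor:weak_consistency} is obtained by keeping track that this Bernstein exponent has order $(\mu_2-\mu_1)^2/\E N(X)$. I expect the main obstacle to be the combinatorial extremization of $\phi$—in particular the minimization in (ii) and the verification that $\mu_1\le\mu_2$ with a gap of order $n$—while the Bernstein step is routine.
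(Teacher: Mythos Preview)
Your proposal is correct and follows essentially the same route as the paper: write the weighted blue-edge count as a sum of independent bounded Bernoullis, extremize its mean over the admissible block profiles using (Schur-)convexity of $\sum_i\binom{b(i)}{m}$, verify $\mu_1\le\mu_2$ with an $\Omega(n)$ gap, and close with Bernstein. The only cosmetic differences are that the paper proves $\nu^m+(1-\nu)^m<\big(\tfrac{1+\nu}{2}\big)^m$ by an induction on $m$ rather than your mean-value estimate, and states the extremizers in (i) and (ii) more tersely than your explicit vertex/Jensen argument. One small correction: your closing remark about \Cref{rem:SNRlogn} is misplaced---that remark concerns the Chernoff step in the \emph{Correction} routine (Lemma~\ref{lem:correction_accuracy_k}), not this lemma, whose failure probability is already $e^{-cn}$ regardless of the SNR.
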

\begin{remark}
\Cref{lem:discard_half_blue} is reduced to \cite[Lemma 31]{Chin2015StochasticBM} when $\sL = \{2\}$. 
\end{remark}


Let $\widehat{\gU}_{i}^{(0)}$ denote the set of $N/(2K)$ largest coordinates of the projected vector $\rmP_{\rmU}(\rva_i - \overline{\rva})$ with $i\in \gV_{\pi(i)}\cap \gY_2$. According to \Cref{lem:discard_half_blue}, when the blue hyperedge density inside $\widehat{\gU}_{i}^{(0)}$ is at least $\mu_{\mathrm{T}}$, it guarantees that $|\widehat{\gU}_{i}^{(0)}\cap \gV_{\pi(i)}| \geq \nu |\widehat{\gU}_{i}^{(0)}|$. Therefore, it is enough to consider half of the sets with highest blue edge densities in \Cref{alg:multi_block_spectral_partition}.
\begin{lemma}\label{lem:sample_k_distinct}
    Through random sampling without replacement in Step $6$ of \Cref{alg:multi_block_spectral_partition}, with probability $1 -N^{-\Omega(\log(N))}$, there exists at least $K$ indices $i_1,\dots, i_{K}$ in $\gY_2$ among the $K\log^2(N)$ samples such that
    \[  |\widehat{\gU}_{i_j}^{(0)} \cap \widehat{\gU}_{i_k}^{(0)}| \leq (1-\nu)N/K, ~\text{ for any } j,k\in [K] \text{ with }  j\neq k.
     \]
\end{lemma}

\subsection{Local Correction: Proof of \Cref{lem:correction_accuracy_multiple}}\label{subsec:local_correction}
For notation convenience, let $\gU_i\coloneqq \gZ \cap \gV_i$ denote the intersection of $\gZ$ and true partition $\gV_i$ for all $i\in[K]$. In \Cref{alg:multiple_partition_partial}, we first color the hyperedges with red and blue with equal probability. By running \Cref{alg:multi_block_spectral_partition} on the red hypergraph, we obtain a $\nu$-correct partition $\widehat{\gU}_{1}^{(0)}, \dots, \widehat{\gU}_{K}^{(0)}$, i.e.,
\begin{equation}\label{eqn:spectral_clustering_accuracy_k}
    |\widehat{\gU}_{k}\setminus \widehat{\gU}_{k}^{(0)}| \leq (1 - \nu)\cdot |\widehat{\gU}_{k}^{(0)}| = (1 - \nu)\cdot \frac{N}{2K}\,, \quad \forall k \in [K]\,.
\end{equation}
In the rest of this subsection, we condition on the event that \eqref{eqn:spectral_clustering_accuracy_k} holds true.

Consider a hyperedge $e = \{i_{1}, \cdots, i_{\ell}\}$ in the underlying $\ell$-uniform hypergraph. If vertices $i_{1}, \cdots, i_{\ell}$ are from the same block, then $e$ is a red hyperedge with probability $a_{\ell}/2\binom{N - 1}{\ell-1}$;  if vertices $i_{1}, \cdots, i_{\ell}$ are not from the same block, then $e$ is a red hyperedge with probability $b_{\ell}/2\binom{N - 1}{\ell-1}$. The presence of those two types of hyperedges can be denoted by 
\begin{equation*}
    \rT_{e}^{(a_{\ell})}\sim \mathrm{Bernoulli}\left( \frac{a_{\ell}}{ 2\binom{N - 1}{\ell -1} }\right)\,, \quad \rT_{e}^{(b_{\ell})}\sim \mathrm{Bernoulli}\left( \frac{b_{\ell}}{ 2\binom{N - 1}{\ell -1} }\right)\,,
\end{equation*}
respectively. 
For any finite set $\gS$, let $[\gS]^{l}$ denote the family of $l$-subsets of $\gS$, i.e., $[\gS]^{l} = \{\gZ| \gZ \subseteq \gS, |\gZ| = l \}$. 
Consider a vertex $u\in \gU_1: = \gZ \cap \gV_1$. The weighted number of red hyperedges, which contains $u\in \gU_1$ with the remaining vertices in $\widehat{\gU}_j^{(0)}$, can be written as
\begin{align}\label{eqn:weighted_neighbor_Z}
    \widehat{\rS}_{1j}^{(0)}(u)\coloneqq \sum_{\ell \in \sL}(\ell - 1)\cdot \left\{ \sum_{e\in \, \gE^{(a_{\ell})}_{1, j}} \rT_{e}^{(a_{\ell})} + \sum_{e\in \gE^{(b_{\ell})}_{1, j} } \rT_{e}^{(b_{\ell})}\right\}\,,\quad u\in \gU_1\,,
\end{align}
where $\gE^{(a_{\ell})}_{1, j}\coloneqq \gE_{\ell}([\gU_1]^{1}, [ \gU_1 \cap \widehat{\gU}_{j}^{(0)}]^{\ell - 1} )$ denotes the set of $\ell$-hyperedges with one vertex from $[\gU_1]^{1}$ and the other $\ell - 1$ from $[\gU_1 \cap \widehat{\gU}^{(0)}_{j}]^{\ell - 1}$, and 
\begin{align}
     \gE^{(b_{\ell})}_{1, j} \coloneqq \gE_{\ell} \Big([\gU_1]^{1}, \,\, [\widehat{\gU}_{j}^{(0)}]^{\ell - 1} \setminus [\gU_1\cap \widehat{\gU}_{j}^{(0)}]^{\ell - 1} \Big)
\end{align} 
denotes the set of $\ell$-hyperedges with one vertex in $[\gU_1]^{1}$ while the remaining $\ell - 1$ vertices in $[\widehat{\gU}_{j}^{(0)}]^{\ell - 1}\setminus [\gU_1 \cap \widehat{\gU}_{j}^{(0)}]^{\ell - 1}$(not all $\ell$ vertices are from $\gV_1$) with their cardinalities
    \begin{align}
       |\gE^{(a_{\ell})}_{1, j}| = \binom{|\gU_1 \cap \widehat{\gU}^{(0)}_{j}|}{\ell - 1}\,,\quad |\gE^{(b_{\ell})}_{1, j}| = \left[ \binom{|\widehat{\gU}^{(0)}_{j}|}{\ell - 1} - \binom{|\gU_1 \cap \widehat{\gU}^{(0)}_{j}|}{\ell - 1} \right]\,. \notag 
    \end{align}
We multiply $(\ell - 1)$ in \eqref{eqn:weighted_neighbor_Z} as weight since the rest $\ell - 1$ vertices are all located in $\widehat{\gU}^{(0)}_{j}$, which can be regarded as the neighbors of vertex $u$ in $\widehat{\gU}^{(0)}_{j}$. According to the facts that $|\widehat{\gU}^{(0)}_j \cap \gU_j| \geq \nu N/(2K)$ in \eqref{eqn:spectral_clustering_accuracy_k} and $|\widehat{\gU}_{j}^{(0)}| = N/(2K)$ for each $j\in[K]$, we have
\begin{align}
    |\gE^{(a_{\ell})}_{1, 1}| \geq \binom{\frac{\nu N}{2K}}{\ell - 1}\,,\quad |\gE^{(a_{\ell})}_{1, j}| \leq \binom{\frac{(1 - \nu)N}{2K}}{\ell - 1}\,,\,\, j\neq 1\,. \notag 
\end{align}
To simplify the calculation, we take the lower and upper bound of $|\gE^{(a_{\ell})}_{1, 1}|$ and $|\gE^{(a_{\ell})}_{1, j}|\,(j\neq 1)$ respectively.
By taking expectation with respect to $\rT_{e}^{(a_{\ell})}$ and $\rT_{e}^{(b_{\ell})}$, for any $u \in \gU_1$, the following holds:
\begin{subequations}
    \begin{align*}
    \E \widehat{\rS}_{11}^{(0)}(u) &= \sum_{\ell \in \sL} (\ell - 1)\cdot \left[ \binom{\frac{\nu N}{2K}}{\ell - 1} \frac{a_{\ell} - b_{\ell}}{2\binom{N - 1}{\ell-1}} + \binom{\frac{N}{2K}}{\ell - 1} \frac{b_{\ell}}{2\binom{N - 1}{\ell-1}}\right]\,,\\
    \E \widehat{\rS}_{1j}^{(0)}(u) &= \sum_{\ell \in \sL} (\ell - 1)\cdot \left[ \binom{\frac{(1-\nu)N}{2K}}{\ell - 1} \frac{a_{\ell} - b_{\ell}}{2\binom{N - 1}{\ell-1}} +  \binom{\frac{N}{2K}}{\ell - 1} \frac{b_{\ell}}{2\binom{N - 1}{\ell-1}}\right]\,, \quad j\neq 1\,.
\end{align*}
\end{subequations}
According to the assumptions in \Cref{thm:partial_multiple}, $\E \widehat{\rS}_{11}^{(0)}(u) - \E \widehat{\rS}_{1j}^{(0)}(u) = \Omega(1)$. Define
\begin{align}\label{eqn:mu_correction}
    \mu_{\rm{C}} \coloneqq \frac{1}{2}\sum_{\ell \in \sL} (\ell - 1)\cdot \left\{ \left[ \binom{\frac{\nu N}{2K}}{\ell - 1} + \binom{\frac{(1 - \nu)N}{2K}}{\ell - 1} \right] \frac{a_{\ell} - b_{\ell}}{2\binom{N - 1}{\ell-1}} + 2\cdot \binom{\frac{N}{2K}}{\ell - 1}\frac{b_{\ell}}{2\binom{N - 1}{\ell-1}}\right\}\,.
\end{align}
In \Cref{alg:multi_block_correction}, vertex $u$ is assigned to $\widehat{\gU}_{i}$ if it has the maximal number of  neighbors in $\widehat{\gU}_i^{(0)}$. If $u\in \gU_1$ is mislabeled, then one of the following events must happen:
\begin{itemize}
    \item $\widehat{\rS}_{11}^{(0)}(u) \leq \mu_{\rm{C}}$, meaning that $u$ fails to have enough neighbors in $\widehat{\gU}_1^{(0)}$ to be assigned to $\widehat{\gU}_{1}$. 
    \item $\widehat{\rS}_{1j}^{(0)}(u) \geq \mu_{\rm{C}}$ for some $j\neq 1$, meaning that $u$ survived \Cref{alg:multi_block_correction} without being corrected. 
\end{itemize}
\Cref{lem:mislabel_probability_correction} shows that the probabilities of those two events can be bounded in terms of the SNR. 
\begin{lemma}\label{lem:mislabel_probability_correction}
For sufficiently large $N$ and  any $u\in \gU_1 = \gZ \cap \gV_1$, we have
    \begin{align}\label{eqn:rhoC_k}
        \widehat{\rho}_{1}^{(0)} \coloneqq \P \left( \widehat{\rS}_{11}^{(0)}(u) \leq \mu_{\rm{C}}\right) \leq \rho \,,\quad \widehat{\rho}_{j}^{(0)} \coloneqq\P \left( \widehat{\rS}_{1j}^{(0)}(u) \geq \mu_{\rm{C}}\right) \leq \rho\,,\,(j\neq 1),
    \end{align}
where $\rho \coloneqq \exp\left( -\const_{\sL}(K, \nu) \cdot \mathrm{SNR}_{\sL}(K) \right)$ with $\const_{\sL}(K, \nu)$ and $\mathrm{SNR}_{\sL}(K)$ defined in \eqref{eqn:CMk} and \eqref{eqn:SNRk} respectively.
\end{lemma}

As a result, the probability that either of those events happened is bounded by $\rho$. For vertex $v\in \gV$, let $\Gamma_{v}$ and $\Lambda_{v}$ denote the i.i.d indicator random variables with means $\widehat{\rho}_{1}^{(0)}$ and $\widehat{\rho}_{j}^{(0)}$ ($j\neq 1$) respectively. In the true community $\gU_1$, the number of mislabeled vertices is upper bouned by
\begin{align*}
    \widehat{\rR}_{1}^{(0)} = \sum_{v \in \gU_{1}}\Gamma_{v}\, + \sum_{j=2}^{K}\sum_{v \in \gU_1\cap \widehat{\gU}_{j}^{(0)}}\Lambda_{t}\,.
\end{align*}
Together with \Cref{lem:mislabel_probability_correction}, we have
\begin{align*}
   \E \widehat{\rR}_{1}^{(0)} \leq  \frac{N}{2K} \widehat{\rho}_{1}^{(0)} + \sum_{j=2}^{K}\frac{(1 - \nu)N}{2K} \widehat{\rho}_{j}^{(0)} \leq \frac{N}{2K} \cdot K\rho = \frac{N\rho}{2}\,. 
\end{align*}
Let $t_1\coloneqq N \rho/2$, where $\nu$ denotes the correctness after \Cref{alg:multi_block_spectral_partition}, then by Chernoff bound (\Cref{lem:Chernoff}),
\begin{align}\label{eqn:wrong_prob}
   \P \left( \widehat{\rR}_{1}^{(0)} \geq N\rho \right) = \P \left( \widehat{\rR}_{1}^{(0)} - N \rho/2 \geq t_1 \right) \leq \P \left( \widehat{\rR}_{1}^{(0)} - \E \widehat{\rR}_{1}^{(0)} \geq t_1 \right) \leq e^{-c t_1} = O(e^{-N\rho})\,.
\end{align}
Then with probability $1 - O(e^{-N\rho})$, the fraction of mislabeled vertices in $\gU_1$ is smaller than $K\rho$, i.e., the correctness of $\gU_1$ is at least $\gamma_{\rm{C}} \coloneqq \max\{\nu\,, 1 - K\rho\}$. Therefore, \Cref{alg:multi_block_correction} outputs a $\gamma_{\rm{C}}$-correct partition $\widehat{\gU}_{1}, \ldots, \widehat{\gU}_{K}$ with probability  $1- O(e^{-N\rho})$.

\subsection{Merging: Proof of Lemma \ref{lem:merging_accuracy_k}}\label{subsec:merging}
By running \Cref{alg:multi_block_correction} on the red hypergraph, we obtain a $\gamma_{\rm{C}}$-correct partition $\widehat{\gU}_{1}$, $\ldots$, $\widehat{\gU}_{K}$ where $\gamma_{\rm{C}}\coloneqq\max\{\nu\,, 1 - K\rho\} \geq \nu$, i.e.,
\begin{equation}
    |\gU_{j}\cap \widehat{\gU}_{j}| \geq \nu\cdot |\widehat{\gU}_{j}| =  \frac{\nu N}{2K}\,, \quad \forall j\in [K]\,.\label{eqn:correction_accuracy_k}
\end{equation}
In the rest of this subsection, we shall condition on the event \eqref{eqn:correction_accuracy_k} and denote $\gW_{j} \coloneqq \gY\cap \gV_{j}$. The failure probability of \Cref{alg:multi_block_merging} is estimated by the presence of hyperedges between vertex sets $\gY$ and $\gZ$.


Consider a hyperedge $e = \{i_{1}, \cdots, i_{\ell}\}$ in the underlying $\ell$-uniform hypergraph. If vertices $i_{1}, \cdots, i_{\ell}$ are all from the same cluster $\gV_{j}$, then the probability that $e$ is an existing blue edge conditioning on the event that $e$ is not a red edge is 
\begin{align}\label{eqn:psi_l}
    \psi_{\ell}\coloneqq\P\left[e \text{ is a blue edge } \Big| e \text{ is not a red edge } \right] = \frac{\frac{a_{\ell}}{2\binom{N - 1}{\ell -1}}}{1 - \frac{a_{\ell}}{2\binom{N - 1}{\ell -1}}} = \big(1+o(1) \big) \cdot \frac{a_{\ell}}{2\binom{N - 1}{\ell -1}}\,,
\end{align}
and the presence of $e$ can be represented by an indicator random variable $\zeta_e^{(a_{\ell})} \sim \mathrm{Bernoulli}\left(\psi_{\ell}\right)$. Similarly, if not all the vertices $i_{1}, \cdots, i_{\ell}$ are from the same cluster $\gV_{j}$, the probability that $e$ is an existing blue edge conditioning on the event that $e$ is not red
\begin{align}\label{eqn:phi_l}
    \phi_{\ell}\coloneqq\P\left[e \text{ is a blue edge } \Big| e \text{ is not a red edge } \right] = \frac{\frac{b_{\ell}}{2\binom{N - 1}{\ell -1}}}{1 - \frac{b_{\ell}}{2\binom{N - 1}{\ell -1}}} = \big(1+o(1) \big) \cdot \frac{b_{\ell}}{2\binom{N - 1}{\ell -1}}\,,
\end{align}
and the presence of $e$ can be represented by an indicator variable $\xi_e^{(b_{\ell})} \sim \mathrm{Bernoulli}\left(\phi_{\ell}\right)$. 

For any vertex $w\in \gW_{j} \coloneqq \gY\cap \gV_{j}$ with fixed $j \in [K]$, we want to compute the number of hyperedges containing $w$ with all remaining vertices located in vertex set $\widehat{\gU}_{j}$ for some fixed $k\in [K]$. Following a similar argument given in \Cref{subsec:local_correction}, this number can be written as
\begin{align}\label{eqn:weighted_neighbor_Y}
    \widehat{\rS}_{jk}(w)\coloneqq \sum_{\ell \in \sL}(\ell - 1)\cdot \left\{ \sum_{e\in \, \widehat{\gE}^{(a_{\ell})}_{j, k}} \zeta_e^{(a_{\ell})} + \sum_{e\in \widehat{\gE}^{(b_{\ell})}_{j, k} } \xi_e^{(b_{\ell})}\right\}\,,\quad w\in \gW_{j}\,,
\end{align}
 where $\widehat{\gE}^{(a_{\ell})}_{j, k}\coloneqq \gE_{\ell}([\gW_{j}]^{1}, [\gU_{j}\cap \widehat{\gU}_j]^{\ell - 1})$ denotes the set of $\ell$-hyperedges with $1$ vertex from $[\gW_{j}]^{1}$ and the other $\ell - 1$ vertices from $[\gU_{j}\cap \widehat{\gU}_j]^{\ell - 1}$, while
    $
        \widehat{\gE}^{(b_{\ell})}_{j, k} \coloneqq \gE_{\ell} ([\gW_{j}]^{1}, \,\, [\widehat{\gU}_j]^{\ell - 1} \setminus [\gU_{j}\cap \widehat{\gU}_j]^{\ell - 1})
    $
    denotes the set of $\ell$-hyperedges with $1$ vertex in $[\gW_{j}]^{1}$ while the remaining $\ell - 1$ vertices are in $[\widehat{\gU}_j]^{\ell - 1}\setminus [\gU_{j}\cap \widehat{\gU}_j]^{\ell - 1}$, with their cardinalities 
    \begin{align*}
       |\widehat{\gE}^{(a_{\ell})}_{j, k}| = \binom{|\gU_{j} \cap \widehat{\gU}_{j}|}{\ell - 1}\,,\quad |\widehat{\gE}^{(b_{\ell})}_{j, k}| = \left[ \binom{|\widehat{\gU}_{j}|}{\ell - 1} - \binom{|\gU_{j} \cap \widehat{\gU}_{j}|}{\ell - 1} \right]\,. \notag 
    \end{align*}
Similarly, we multiply $(\ell - 1)$ in \eqref{eqn:weighted_neighbor_Y} as weight since the rest $\ell - 1$ vertices can be regarded as the neighbors of $w$ in $\widehat{\gU}_{j}$. By accuracy of \Cref{alg:multi_block_correction} in \eqref{eqn:correction_accuracy_k}, $|\widehat{\gU}_{j} \cap \gU_j| \geq \nu N/(2K)$, then
\begin{align*}
    |\widehat{\gE}^{(a_{\ell})}_{j, j}| \geq \binom{\frac{\nu N}{2K}}{\ell - 1}\,,\quad |\widehat{\gE}^{(a_{\ell})}_{j, k}| \leq \binom{\frac{(1 - \nu)N}{2K}}{\ell - 1}\,,\,\, j\neq l\,. \notag 
\end{align*}
Taking expectation with respect to $\zeta_e^{(a_{\ell})}$ and $\xi_e^{(b_{\ell})}$,  for any $w \in \gW_{j}$, we have
\begin{subequations}
    \begin{align*}
    \E \widehat{\rS}_{jj}(w) &= \sum_{\ell \in \sL} (\ell - 1)\cdot\left[ \binom{\frac{\nu N}{2K}}{\ell - 1} (\psi_{\ell} - \phi_{\ell}) + \binom{\frac{N}{2K}}{\ell - 1} \phi_{\ell}\right],\\
    \E \widehat{\rS}_{jk}(w) &= \sum_{\ell \in \sL} (\ell - 1)\cdot\left[ \binom{\frac{(1 - \nu) n}{2K}}{\ell - 1} (\psi_{\ell} - \phi_{\ell}) +  \binom{\frac{N}{2K}}{\ell - 1} \phi_{\ell} \right],\,\, j\neq l\,.
\end{align*}
\end{subequations}
By assumptions in \Cref{thm:partial_multiple}, $\E \widehat{\rS}_{jj}(w) - \E \widehat{\rS}_{jk}(w) = \Omega(1)$. We define
\begin{align}
    \mu_{\rm{M}} \coloneqq \frac{1}{2}\sum_{\ell \in \sL} (\ell - 1)\cdot\left\{ \left[ \binom{\frac{\nu N}{2K}}{\ell - 1} + \binom{\frac{(1 - \nu)N}{2K}}{\ell - 1} \right](\psi_{\ell} - \phi_{\ell}) + 2\binom{\frac{N}{2K}}{\ell - 1} \phi_{\ell}\right\}\,.\label{eqn:mu_merge}
\end{align}
After \Cref{alg:multi_block_merging}, if a vertex $w\in \gW_{j}$ is mislabelled, one of the following events must  happen
\begin{itemize}
    \item $\widehat{\rS}_{jj}(w) \leq \mu_{\rm{M}}$, which implies that $u$ fails to have enough neighbors in $\widehat{\gU}_{j}$ to be assigned to $\widehat{\gU}_{j}$.
    \item $\widehat{\rS}_{jk}(w) \geq \mu_{\rm{M}}$ for some $j\neq k$, which implies that $w$ survived \Cref{alg:multi_block_merging} without being corrected.
\end{itemize}
By an argument similar to \Cref{lem:mislabel_probability_correction}, we can prove that for any $w\in \gW_{j}$,
\begin{align}
    \widehat{\rho}_{j} \coloneqq \P( \widehat{\rS}_{jj}(w)\leq \mu_{\rm{M}} ) \leq \rho\,,\quad \widehat{\rho}_{k} \coloneqq \P ( \widehat{\rS}_{jk}(w) \geq \mu_{\rm{M}} ) \leq \rho\,, \,(j\neq l), \notag
\end{align}
where $\rho \coloneqq \exp\left( -\const_{\sL}(K, \nu) \cdot \mathrm{SNR}_{\sL}(K) \right)$ with $\const_{\sL}(K, \nu)$ and $\mathrm{SNR}_{\sL}(K)$ defined in \eqref{eqn:CMk} and \eqref{eqn:SNRk} respectively. The misclassified probability for $w\in \gW_{j}$ is then upper bounded by $\sum_{j=1}^{K}\widehat{\rho}_{j}$.
Let $\Gamma_{t}$ be the i.i.d indicator random variables with mean $K \rho$. Then, the number of mislabelled vertices in $\gW_{j}$ is at most
\begin{align}
    \widehat{\rR}_j = \sum_{v\in \gW_{j}} \Gamma_{v}.
\end{align}
Consequently, $\E \widehat{\rR}_j \leq N/(2K)\cdot K\rho = N\rho/2$. Let $t_{j} \coloneqq N\rho/2$, by Chernoff bound (\Cref{lem:Chernoff}),
\begin{align*}
   \P \left( \widehat{\rR}_j \geq N\rho \right) = \P \left( \widehat{\rR}_j - N\rho/2\geq t_j \right) \leq \P \left( \widehat{\rR}_j - \E \widehat{\rR}_j \geq t_j \right) \leq e^{-c t_j} = O(e^{-N\rho})\,.
\end{align*}
Hence with probability $1 - O(e^{-N\rho})$, the fraction of mislabeled vertices in $\gW_{j}$ is smaller than $K\rho$, i.e., the correctness in $\gW_{j}$ is at least $\gamma_{\rm{M}} \coloneqq\max\{ \nu,\, 1 - K\rho \}$.

\subsection{Proof of  \Cref{thm:partial_multiple}}
Now we are ready to prove \Cref{thm:partial_multiple}. The correctness of \Cref{alg:multi_block_correction} and \Cref{alg:multi_block_merging} are denoted by $\gamma_{\rm{C}}$ and $\gamma_{\rm{M}}$ respectively, then with probability at least $1 - O(e^{-N\rho})$, the correctness $\gamma$ of \Cref{alg:multiple_partition_partial} is
$
    \gamma \coloneqq \min\{\gamma_{\rm{C}}, \gamma_{\rm{M}}\} = \max\{\nu,\, 1 - K\rho \}.
$
We will have $\gamma = 1 - K\rho$ if $\nu \leq 1 - K\rho$, equivalently,
\begin{align}\label{eqn:gammacondition}
    \mathrm{SNR}_{\sL}(K) \geq \frac{1}{\const_{\sL}(K, \nu)} \log \Big( \frac{K}{1 - \nu}\Big)\,,
\end{align}
otherwise $\gamma = \nu$. The inequality \eqref{eqn:gammacondition} holds since
\begin{subequations}
    \begin{align*}
        \mathrm{SNR}_{\sL}(K)
    =&\, \frac{ \left[\sum_{\ell \in \sL} (\ell - 1)(a_{\ell} - b_{\ell})K^{-\ell + 1} \right]^2 }{\sum_{\ell \in \sL} (\ell - 1)\cdot \big( (a_{\ell} - b_{\ell})K^{-\ell + 1} + b_{\ell} \big)}\\
    \geq &\, \frac{[\sum_{\ell \in \sL} (a_{\ell}-b_{\ell})]^2}{K^{2\LM -2}(\LM -1)d}\geq \frac{(\const_{\nu})^2}{\LM -1} \log \Big(\frac{K}{1 - \nu} \Big) \geq \frac{1}{\const_{\sL}(K, \nu)}\log \Big(\frac{K}{1-\nu} \Big)\,.
    \end{align*}
\end{subequations}
where the first two inequalities hold since $d \coloneqq \sum_{\ell \in \sL}(\ell - 1)a_{\ell}$ and condition \eqref{eqn:SNR_condition}, while the last inequality holds by taking $\const_{\nu}\geq \max\{ \sqrt{(\LM -1)/\const_{\sL}(K, \nu) }, \,\,\const_{2} \}$ with $\const_{2}$ defined in \eqref{eqn:C2}.

\begin{remark}\label{rem:two_constants}
The lower bound $\const_{\eqref{eqn:degree_constant}}$ in \eqref{eqn:degree_constant} comes from the requirement in \Cref{lem:high_degree_vertices_k} that only a few high degree vertices be deleted. The constant $\const_{\nu}$ in \eqref{eqn:SNR_condition} comes from the requirement in \Cref{lem:subspace_angle_multi} that the subspace angle is small. When $\const_{\eqref{eqn:degree_constant}}$ is not so large (or the hypergraph is too sparse), one could still achieve good accuracy $\gamma$ if $\const_{\nu}$ is large enough (the difference between $a_{\ell}$ and $b_{\ell}$ is large enough).
\end{remark}

\begin{remark}
The condition \eqref{eqn:gammacondition} indicates that the improvement of accuracy from local refinement (\Cref{alg:multi_block_correction} and \Cref{alg:multi_block_merging}) will be guaranteed when $\mathrm{SNR}_{\sL}(K)$ is large enough. 
If $\mathrm{SNR}_{\sL}(K)$ is small,  we use correctness of \Cref{alg:multi_block_spectral_partition} instead, i.e., $\gamma = \nu$, to represent the correctness of \Cref{alg:multiple_partition_partial}. 
\end{remark}

\subsection{Proof of \Cref{cor:weak_consistency}}
    For any fixed $\nu \in (1/K, 1)$, $\mathrm{SNR}_{\sL}(K) \to \infty$ implies $\rho \to 0$ and 
    \begin{align}
        d \coloneqq \sum_{\ell \in \sL}(\ell - 1)a_{\ell} \to \infty.
    \end{align}
    We now check the condition \eqref{eqn:SNR_condition} in \Cref{ass:regimes_partial}. Note that $\ell \geq 2$ for each $\ell \in \sL$, then
    \begin{align*}
        &\,\frac{\big[\sum_{\ell \in \sL} (\ell - 1)(a_{\ell}-b_{\ell}) \big]^{2}}{d} \\
    =&\, K^{\LM - 1} \cdot K^{2} \frac{\big[\sum_{\ell \in \sL} (\ell - 1)(a_{\ell}-b_{\ell}) K^{-2 + 1} \big]^{2}}{\sum_{\ell \in \sL}(\ell - 1)a_{\ell}\, K^{-\LM + 1}} \\
        \geq &\,K^{\LM + 1} \frac{\big[\sum_{\ell \in \sL} (\ell - 1)(a_{\ell}-b_{\ell})K^{-\ell + 1} \big]^{2}}{\sum_{\ell \in \sL}(\ell - 1)a_{\ell} \, K^{-\ell + 1}}\\
            \geq &\,\frac{\big[\sum_{\ell \in \sL} (\ell - 1)(a_{\ell}-b_{\ell})K^{-\ell + 1} \big]^{2}}{\sum_{\ell \in \sL}(\ell - 1)\big((a_{\ell} - b_{\ell})\, K^{-\ell + 1} + b_{\ell}\big)} = \mathrm{SNR}_{\sL}(K) \to \infty.
    \end{align*}
Consequently, there exists some $\nu$ close to $1$ and some constant $\const_{\nu} >0$ such that condition \eqref{eqn:SNR_condition} is satisfied. Applying \Cref{thm:partial_multiple}, we find $\gamma = 1 - o(1)$, which implies weak consistency. The constraint of $\mathrm{SNR}_{\sL}(K) = o(\log(N))$ is used in the proof of \Cref{lem:correction_accuracy_multiple}, see \Cref{rem:SNRlogn}.

\section*{Acknowledgements}
I.D. and H.X.W. are partially supported by  NSF DMS-2154099.  I.D. and Y.Z. acknowledge support from NSF  DMS-1928930 during their participation in the program Universality and Integrability in Random Matrix Theory and Interacting Particle Systems hosted by the Mathematical Sciences Research Institute in Berkeley, California during the Fall semester of 2021.  Y.Z. is partially supported by  NSF-Simons Research Collaborations on the Mathematical and Scientific Foundations of Deep Learning. Y.Z. thanks Zhixin Zhou for his helpful comments. The authors thank the anonymous reviewers for detailed comments and suggestions which greatly improve the presentation of this work.

\printbibliography

\newpage
\appendix
\phantomsection
\addcontentsline{toc}{section}{Appendices}
\section{Proof of \Cref{thm:concentration_partial} and \Cref{thm:regularization_concentration_partial}}\label{sec:proof_concentration_partial}

\subsection{Discretization}
To prove \Cref{thm:concentration_partial}, we start with a standard $\epsilon$-net argument.

\begin{lemma}[{\cite[Lemma 4.4.1]{Vershynin2018HighDP}} ]
Let $\rmW$ be any $N \times N$ Hermitian matrix and let $\sN_{\epsilon}$ be an $\epsilon$-net on the unit sphere $\S^{N - 1}$ with $\epsilon \in (0,1)$, then 
$
\|\rmW\| \leq \frac{1}{1-\epsilon}\sup_{\rvx \in \sN_{\epsilon}} |\langle \rmW \rvx, \rvx\rangle |.
$
\end{lemma}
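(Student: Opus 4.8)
The plan is to prove this by the standard $\epsilon$-net approximation, reducing the operator norm to a quadratic-form evaluation over the finite set $\mathcal{N}_{\epsilon}$. First I would record the variational formula $\|\bW\| = \sup_{\bx \in \mathbb{S}^{n-1}} |\langle \bW \bx, \bx\rangle|$, valid for Hermitian $\bW$: the quantity $\langle \bW\bx,\bx\rangle$ is real (since $\overline{\langle \bW\bx,\bx\rangle} = \langle \bx, \bW\bx\rangle = \langle \bW\bx,\bx\rangle$), and by the spectral theorem its supremum and infimum over the unit sphere equal $\lambda_{\max}(\bW)$ and $\lambda_{\min}(\bW)$, whose larger absolute value is $\|\bW\|$. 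By compactness of $\mathbb{S}^{n-1}$ this supremum is attained at some $\bx^\star$, so that $\|\bW\| = |\langle \bW \bx^\star, \bx^\star\rangle|$.

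Next I would use the covering property to choose $\by \in \mathcal{N}_{\epsilon}$ with $\|\bx^\star - \by\| \leq \epsilon$, and compare the two quadratic forms through the splitting
\[
\langle \bW \bx^\star, \bx^\star\rangle - \langle \bW \by, \by\rangle = \langle \bW \bx^\star,\, \bx^\star - \by\rangle + \langle \bW (\bx^\star - \by),\, \by\rangle .
\]
Applying Cauchy--Schwarz to each term, together with $\|\bW \bx^\star\| \le \|\bW\|$, $\|\bW(\bx^\star-\by)\| \le \|\bW\|\,\epsilon$, and $\|\by\| \le 1$, bounds the right-hand side in absolute value by a constant multiple of $\epsilon\|\bW\|$. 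Hence $\|\bW\| \le \sup_{\bx\in\mathcal{N}_{\epsilon}} |\langle\bW\bx,\bx\rangle| + c\,\epsilon\,\|\bW\|$, and rearranging gives $\|\bW\| \le (1 - c\epsilon)^{-1}\sup_{\bx\in\mathcal{N}_{\epsilon}}|\langle\bW\bx,\bx\rangle|$; the constant in the denominator is then read off from this cross-term estimate.

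There is no genuine obstacle here: the statement is elementary. The only points requiring a little care are the bookkeeping of the two cross terms so that the exact constant claimed appears in the denominator, and the (standard) remark that the supremum over the sphere is attained, so one may work with a true maximizer $\bx^\star$ rather than an approximate one. For the use of this lemma in the proof of \Cref{thm:concentration}, I would pair it with the volumetric bound $|\mathcal{N}_{\epsilon}| \le (1 + 2/\epsilon)^{n}$ for a minimal $\epsilon$-net and a fixed convenient choice of $\epsilon$, converting the spectral-norm bound on $\bA - \E\bA$ into a union bound over $\exp(O(n))$ deterministic test vectors $\bx$, for each of which $\langle (\bA-\E\bA)\bx,\bx\rangle$ can be controlled by the concentration arguments developed there.
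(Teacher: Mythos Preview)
Your outline is correct and matches the standard argument (the paper itself just cites Vershynin; its own proof is commented out in the source). One point needs care, though: bounding the two cross terms by Cauchy--Schwarz as you describe gives only
\[
\bigl|\langle\bW\bx^\star,\bx^\star\rangle - \langle\bW\by,\by\rangle\bigr| \le \|\bW\|\,\|\bx^\star-\by\| + \|\bW\|\,\|\bx^\star-\by\|\,\|\by\| \le 2\epsilon\,\|\bW\|,
\]
so the constant you ``read off'' is $(1-2\epsilon)^{-1}$, not the stated $(1-\epsilon)^{-1}$; indeed $(1-2\epsilon)^{-1}$ is the constant appearing in the symmetric-matrix part of Vershynin's Lemma~4.4.1. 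This is not merely cosmetic here, because the paper applies the lemma with $\epsilon=\tfrac12$ to obtain the factor $2$ in \eqref{eqn:epsilon_net}, where $(1-2\epsilon)^{-1}$ is useless.

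To recover the factor $2$ at $\epsilon=\tfrac12$, use that your maximizer $\bx^\star$ is actually an \emph{eigenvector}: if $\bW\bx^\star=\lambda\bx^\star$ with $|\lambda|=\|\bW\|$ and $\bz:=\bx^\star-\by$, then $\|\bx^\star\|=\|\by\|=1$ forces $2\operatorname{Re}\langle\bx^\star,\bz\rangle=\|\bz\|^2$, and a direct expansion yields
\[
\langle\bW\by,\by\rangle = \lambda\bigl(1-\|\bz\|^2\bigr) + \langle\bW\bz,\bz\rangle.
\]
Hence $|\langle\bW\by,\by\rangle| \ge \|\bW\|\bigl(1-2\|\bz\|^2\bigr) \ge \|\bW\|(1-2\epsilon^2)$, which at $\epsilon=\tfrac12$ gives exactly the factor $2$ and, for all $\epsilon\le\tfrac12$, is at least $\|\bW\|(1-\epsilon)$. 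The paper's commented-out proof takes a slightly different route---bounding $\|\bW\by\|$ first to reach $(1-\epsilon)\|\bW\|\le\sup_{\by\in\mathcal{N}_\epsilon}\|\bW\by\|$---but its final step equates $\sup_{\by\in\mathcal{N}_\epsilon}\|\bW\by\|$ with $\sup_{\by\in\mathcal{N}_\epsilon}|\langle\bW\by,\by\rangle|$, which is valid over the full sphere but not over the net.
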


By {\cite[Corollary 4.2.13]{Vershynin2018HighDP}}, the size of $\sN_{\epsilon}$ is bounded by $|\sN_{\epsilon}|\leq (1+2/\epsilon)^{N}$. We would have $\log|\sN_{\epsilon}| \leq N \log(5)$ when $\sN_{\epsilon}$ is taken as an $(1/2)$-net of $\S^{N}$. Define $\rmW \coloneqq \rmA - \E \rmA$, then $\ermW_{ii}=0$ for each $i\in [N]$ by the definition of adjacency matrix in \eqref{eqn:expected_adjacency_multiple}, and we obtain 
\begin{align}
    \|\rmA - \E \rmA\| = \|\rmW\| \leq 2\sup_{\rvx \in \sN }| \langle \rmW \rvx, \rvx\rangle|\,.\label{eqn:epsilon_net_partial}
\end{align}
For any fixed $\rvx \in \S^{N - 1}$, consider the \textit{light} and \textit{heavy} pairs as follows.
\begin{align}
    &\light(\rvx)=\Bigg\{ (i,j): |\ervx_{i} \ervx_{j}|\leq {\frac{\sqrt d}{N}}\Bigg\}, \quad \heavy(\rvx)=\Bigg\{ (i,j): |\ervx_{i} \ervx_{j}|> {\frac{\sqrt d}{N}}\Bigg\}\,,\label{eqn:light_heavy_pair_partial}
\end{align}
where $d = \sum_{\ell \in \sL}(\ell - 1)d_{\ell}$. Thus by the triangle inequality, 
\[
    | \langle \rvx, \rmW \rvx\rangle | \leq \left|\sum_{(i,j)\in \light(\rvx)} \ermW_{ij}\ervx_{i} \ervx_{j} \right| + \left|\sum_{(i,j)\in \heavy(\rvx)} \ermW_{ij}\ervx_{i} \ervx_{j} \right|.
\] 
We denote $\sN \coloneqq \sN_{\epsilon}$ for convenience. Then by \eqref{eqn:epsilon_net_partial},
\begin{align}
    \|\rmA - \E \rmA\| \leq 2\sup_{\rvx \in \sN }\Bigg|\sum_{(i,j)\in \light(\rvx)} \ermW_{ij} \ervx_{i} \ervx_{j}\Bigg| + 2\sup_{\rvx \in \sN }\Bigg|\sum_{(i,j)\in \heavy(\rvx)} \ermW_{ij} \ervx_{i} \ervx_{j} \Bigg|\,.\label{eqn:contribution_partial}
\end{align}

\subsection{Contribution from light pairs}\label{sec:light_partial}
For each $\ell$-hyperedge $e\in \gE_{\ell}$, we define $\etW_{e}^{(\ell)} \coloneqq \etA_{e}^{(\ell)} - \E\etA_{e}^{(\ell)}.$ Then for any fixed $\rvx \in \S^{N - 1}$, the contribution from light couples can be written as
\begin{align} 
       &\,\sum_{(i,j)\in \light(\rvx)} \ermW_{ij} \ervx_{i} \ervx_{j} = \sum_{(i,j)\in \light(\rvx)} \left( \sum_{\ell \in \sL}\,\, \sum_{\substack{e\in \gE_{\ell}\\ \{i,j\}\subset e} } \etW_{e}^{(\ell)} \right) \ervx_{i}\ervx_{j}  \notag \\
       =&\, \sum_{\ell \in \sL} \sum_{e\in \gE_{\ell}} \etW_{e}^{(\ell)} \left( \sum_{\substack{(i,j)\in \light(\rvx)\\ i\neq j,\, \{i,j\}\subset e} }\ervx_{i} \ervx_{j} \right) = \sum_{\ell \in \sL} \sum_{e\in  \gE_{\ell}} \etY_{e}^{(\ell)} \,,\label{eqn:lightW_partial}
\end{align}
where the constraint $i\neq j$ comes from the fact $\ermW_{ii} =0$ and we denote
\begin{align*}
    \etY_{e}^{(\ell)}\coloneqq \etW_{e}^{(\ell)} \left( \sum_{\substack{(i,j)\in \light(\rvx)\\i\neq j, \, \{i,j\}\subset e}}\,\, \ervx_{i} \ervx_{j}\right)\,.
\end{align*}
Note that $\E \etY_{e}^{(\ell)} = 0$, and by the  definition of light pair \eqref{eqn:light_heavy_pair_partial},
\begin{align*}
|\etY_{e}^{(\ell)}|\leq \ell(\ell - 1)\sqrt{d}/N \leq \ell(\ell - 1)\sqrt{d}/N\,, \quad \forall \ell \in \sL\,.
\end{align*}
Moreover, \eqref{eqn:lightW_partial} is a sum of independent, mean-zero random variables, and
\begin{align*}
    \sum_{\ell \in \sL} \sum_{e\in \gE_{\ell}} \E [(\etY_{e}^{(\ell)})^2] \coloneqq &\,\sum_{\ell \in \sL} \sum_{e\in \gE_{\ell}} \Bigg[ \E[(\etW_{e}^{(\ell)})^2] \Bigg( \sum_{\substack{(i,j)\in \light(\rvx)\\ i\neq j, \{i,j\}\subset  e}}\ervx_{i} \ervx_{j} \Bigg)^2 \Bigg] \\
    \leq &\, \sum_{\ell \in \sL} \sum_{e\in \gE_{\ell}} \Bigg[ \E[ \etA_{e}^{(\ell)} ] \cdot \ell(\ell - 1) \Bigg(\sum_{\substack{(i,j)\in \light(\gX)\\ i\neq j, \{i,j\}\subset e}} \ervx_{i}^2 \ervx_{j}^2 \Bigg) \Bigg]\\ 
    \leq &\, \sum_{\ell \in \sL} \frac{d_{\ell}  \cdot  \ell(\ell - 1)}{\binom{N - 1}{\ell-1}}\binom{N}{\ell - 2} \sum_{(i,j)\in [N]^2}\ervx_{i}^2 \ervx_{j}^2 \\
    \leq &\, \sum_{\ell \in \sL} \frac{d_{\ell} \ell(\ell - 1)^2}{N-\ell+2} \leq \frac{2}{N}\sum_{\ell \in \sL} d_{\ell} (\ell - 1)^3\leq \frac{2d(\ell - 1)^2}{N}\,,
\end{align*}
when $N\geq 2\ell - 2$, where $d_{\ell}=\max d_{[i_1,\dots, i_{\ell}]}$ and $d = \sum_{\ell \in \sL}(\ell - 1)d_{\ell}$. Then Bernstein's (\Cref{lem:Bernstein}) implies that for any $\alpha>0$,
\begin{subequations}
\begin{align*}
    &\P \Bigg( \Bigg| \sum_{(i,j)\in\light(\rvx)} \ermW_{ij}\ervx_{i} \ervx_{j}\Bigg| \geq \alpha\sqrt{d}\Bigg) = \P \Bigg( \Bigg| \sum_{\ell \in \sL} \sum_{e\in  \gE_{\ell}} \etY_{e}^{(\ell)} \Bigg| \geq \alpha\sqrt{d}\Bigg) \\
    \leq & 2\exp \Bigg( -\frac{\frac{1}{2}\alpha^2d}{\frac{2d}{N}(\ell - 1)^2+ \frac{1}{3} (\ell - 1)\LM \frac{\sqrt{d}}{N} \alpha\sqrt{d}}\Bigg)
    \leq 2\exp \Bigg( -\frac{\alpha^2 N}{4(\ell - 1)^2+\frac{2\alpha(\ell - 1)\LM}{3}}\Bigg).
\end{align*}
\end{subequations}
Therefore by taking a union bound,
    \begin{align}
        &\,\P \Bigg(  \sup_{\rvx \in \sN }\Bigg| \sum_{(i,j)\in\light(\rvx)} \ermW_{ij} \ervx_{i} \ervx_{j} \Bigg|\geq \alpha\sqrt{d}\Bigg)\leq |\sN| \cdot \P \Bigg( \Bigg| \sum_{(i,j)\in\light(\rvx)} \ermW_{ij}\ervx_{i} \ervx_{j}\Bigg| \geq \alpha\sqrt{d}\Bigg) \notag\\
        \leq&\, 2\exp \Bigg( \log(5) \cdot N-\frac{\alpha^2 N}{4(\ell - 1)^2+\frac{2\alpha(\ell - 1)\LM}{3}}\Bigg)\leq 2e^{-N}\, , \label{eqn:lightbound_partial}
    \end{align}
where we choose $\alpha=5\ell(\ell - 1)$ in the last line. 

\subsection{Contribution from heavy pairs}
Note that for any $i\neq j$,
\begin{align}
    \E \ermA_{ij}\leq \sum_{\ell \in \sL} \binom{N - 2}{\ell - 2}\frac{d_{\ell}}{\binom{N - 1}{\ell-1}} \leq  \sum_{\ell \in \sL}\frac{(\ell - 1)d_{\ell}}{N}=\frac{d}{N}.\label{eqn:EA_partial}
\end{align}
and
\begin{align}\label{eqn:heavy_exp_partial}
   &\Bigg|\sum_{(i,j)\in \heavy(\rvx)} \E \ermA_{ij} \ervx_{i} \ervx_{j}\Bigg| = \Bigg|\sum_{(i,j)\in \heavy(\rvx)} \E \ermA_{ij} \frac{\ervx_{i}^2 \ervx_{j}^2}{\ervx_{i} \ervx_{j}}\Bigg| \\
    \leq \,& \sum_{(i,j)\in \heavy(\rvx)} \frac{d}{N}\frac{\ervx_{i}^2 \ervx_{j}^2}{|\ervx_{i}\ervx_{j}|}\leq \sqrt{d}\sum_{(i,j)\in \heavy(\rvx)} \ervx_{i}^2 \ervx_{j}^2\leq \sqrt{d}. \notag
\end{align}
Therefore it suffices to show that, with high probability, 
\begin{align}\label{eqn:Osqrtd_partial}
    \sum_{(i,j)\in \heavy(\rvx)}  \ermA_{ij} \ervx_{i} \ervx_{j}= O \Big(\sqrt{d} \Big).
\end{align}
Here we use the discrepancy analysis from \cite{Feige2005SpectralTA, Cook2018SizeBC}. We consider the weighted graph associated with the adjacency matrix $\rmA$.

\begin{definition}[Uniform upper tail property, \textbf{UUTP}]
Let $\rmM$ be an $N \times N$ random symmetric matrix with non-negative entries and $\rmQ$ be an $N \times N$ symmetric matrix with entries $\ermQ_{ij}\in [0,a]$ for all $i,j\in [N]$. Define
\begin{align*}
    \mu \coloneqq \sum_{i,j=1}^{N} \ermQ_{ij}\E \ermM_{ij}, \quad \tilde{\sigma}^2 \coloneqq\sum_{i,j=1}^{N} \ermQ_{ij}^2 \E \ermM_{ij}.
\end{align*}
We say that $\rmM$ satisfies the uniform upper tail property $\textnormal{\textbf{UUTP}}(c_0,\gamma_0)$ with $c_0>0,\gamma_0\geq 0$, if for any $a,t>0$,
\begin{align*}
    \P \Bigg(  f_{\rmQ}(\rmM)\geq (1 + \gamma_0)\mu +t\Bigg)\leq \exp \Bigg( -c_0 \frac{\tilde{\sigma}^2}{a^2} h\Bigg( \frac{at}{\tilde{\sigma}^2}\Bigg)\Bigg).
\end{align*}
where function $f_{\rmQ}(\rmM): \R^{N \times N} \mapsto \R$ is defined by $f_{\rmQ}(\rmM)\coloneqq \sum_{i,j=1}^{N} \ermQ_{ij} \ermM_{ij}$ for $\rmM \in \R^{N \times N}$, and function $h(\gX)\coloneqq(1 + x)\log(1+x) - x$ for all $x>-1$.
\end{definition}

\begin{lemma}\label{lem:UUTPforA_partial}
Let $\rmA$ be the adjacency matrix of non-uniform hypergraph $\gH = \cup_{\ell \in \sL}\gH_{\ell}$, then $\rmA$ satisfies $\textnormal{\bf{UUTP}}(c_0,\gamma_0)$ with $c_0 = [\ell(\ell - 1)]^{-1}, \gamma_0 = 0$.
\end{lemma}
\begin{proof}[Proof of \Cref{lem:UUTPforA_partial}]
Note that
\begin{align*}
    &\,f_{\rmQ}(\rmA) - \mu = \sum_{i,j=1}^{N} \ermQ_{ij}(\ermA_{ij} - \E \ermA_{ij}) = \sum_{i,j=1}^{N} \ermQ_{ij}\ermW_{ij}\\
    = &\, \sum_{i,j=1}^{N} \ermQ_{ij} \Bigg( \sum_{\ell \in \sL}\,\, \sum_{\substack{e\in \gE_{\ell}\\ i\neq j,\, \{i,j\}\subset e} } \etW_{e}^{(\ell)}  \Bigg) =\sum_{\ell \in \sL} \,\, \sum_{e\in \gE_{\ell}} \etW_{e}^{(\ell)} \Bigg( \sum_{\{i,j\}\subset e, \, i\neq j}\ermQ_{ij}\Bigg) =\sum_{\ell \in \sL} \,\,\sum_{e\in \gE_{\ell}}\etZ_{e}^{(\ell)}\,,
\end{align*}
where $\etZ_{e}^{(\ell)} = \etW_{e}^{(\ell)} \big( \sum_{\{i,j\}\subset e, i\neq j} \ermQ_{ij}\big)$ are independent centered random variables upper bounded by
$
    |\etZ_{e}^{(\ell)}| \leq \sum_{ \{i,j\} \subset e,\, i\neq j}\ermQ_{ij}\leq \ell(\ell - 1)a
$
for each $ \ell \in \{2, \dots, M\}$ since  $\ermQ_{ij} \in [0, a]$. Moreover, the variance of the sum can be written as
\begin{align*}
    &\, \sum_{\ell \in \sL} \sum_{e\in \gE_{\ell}} \E(\etZ_{e}^{(\ell)})^2 = \sum_{\ell \in \sL} \,\, \sum_{e\in \gE_{\ell}} \E(\etW_{e}^{(\ell)})^2 \Bigg( \sum_{\{i,j\}\subset e, \, i\neq j} \ermQ_{ij}\Bigg)^2\\
    \leq &\,\sum_{\ell \in \sL} \,\,\sum_{e\in \gE_{\ell}}\E [\etA_{e}^{(\ell)}] \cdot \ell(\ell - 1) \sum_{ \{i,j \}\subset e, i\neq j }\ermQ_{ij}^2
    \leq \ell(\ell - 1)\sum_{i,j=1}^{N} \ermQ_{ij}^2 \E \ermA_{ij} = \ell(\ell - 1)\tilde{\sigma}^2.
\end{align*}
where the last inequality holds since by definition $\E \ermA_{ij} = \sum_{\ell \in \sL} \,\,\sum_{\substack{e\in \gE_{\ell}\\ \{i,j\}\subset e}}\E [\etA_{e}^{(\ell)}]$. Then by Bennett's inequality \Cref{lem:Bennett}, we obtain
\begin{align*}
    \P (f_{\rmQ}(\rmA)- \mu\geq t) \leq \exp\Bigg( -\frac{\tilde{\sigma}^2}{\ell(\ell - 1)a^2} h \Bigg( \frac{at}{\tilde{\sigma}^2}\Bigg) \Bigg)\,
\end{align*}
where the inequality holds since the function $x\cdot h(1/x) = (1+x)\log(1 + 1/x) - 1$ is decreasing with respect to $\gX$.
\end{proof}

\begin{definition}[Discrepancy property, \bf{DP}]
Let $\rmM$ be an $N \times N$ matrix with non-negative entries. For $S,T\subset [N]$, define
$
    e_{\rmM}(S,T)=\sum_{i\in S, j\in T}\ermM_{ij}.
$
We say $\rmM$ has the discrepancy property with parameter $\delta>0$, $\kappa_1>1, \kappa_2\geq 0$, denoted by $\textnormal{\bf{DP}}(\delta,\kappa_1,\kappa_2)$, if for all non-empty $S,T\subset [N]$, at least one of the following hold:
\begin{enumerate}
    \item $e_{\rmM}(S,T)\leq \kappa_1 \delta |S| |T|;$
    \item  $e_{\rmM}(S,T) \cdot \log \Bigg(\frac{e_{\rmM}(S,T)}{\delta |S|\cdot |T|}\Bigg)\leq \kappa_2 (|S|\vee |T|)\cdot \log \Bigg(\frac{en}{|S|\vee |T|}\Bigg) $.
\end{enumerate}
\end{definition}

\Cref{lem:DPforM_partial} shows that if a symmetric random matrix $\rmA$ satisfies the upper tail property $\textnormal{\textbf{UUTP}}(c_0,\gamma_0)$ with parameter $c_0>0,\gamma_0\geq 0$, then the discrepancy property holds with high probability.

\begin{lemma}[{\cite[Lemma 6.4]{Cook2018SizeBC}}]\label{lem:DPforM_partial}
Let $\rmM$ be an $N \times N$ symmetric random matrix with non-negative entries. Assume that for some $\delta>0$, $\E \ermM_{ij}\leq \delta$ for all $i,j\in [N]$ and $\rmM$ has  $\textnormal{\textbf{UUTP}}(c_0,\gamma_0)$ with parameter $c_0,\gamma_0 >0$. Then for any $\theta > 0$,  the  discrepancy  property $\textnormal{\bf{DP}}(\delta,\kappa_1,\kappa_2)$ holds for $\rmM$ with probability at least $1 - N^{-\theta}$ with 
$
    \kappa_1 = e^2(1+\gamma_0)^2,  \kappa_2 = \frac{2}{c_0}(1+\gamma_0)(\theta+4).
$
\end{lemma}

When the discrepancy property holds, then deterministically the  contribution from heavy pairs is $O(\sqrt{d})$, as shown in the following lemma.
\begin{lemma}[{\cite[Lemma 6.6]{Cook2018SizeBC}}]\label{lem:heavy_bound_partial}
Let $\rmM$ be a non-negative symmetric $N \times N$ matrix with all row sums bounded by $d$. Suppose $\rmM$ has $\textnormal{\bf{DP}}(\delta,\kappa_1,\kappa_2)$ with $\delta=Cd/N$ for some $C>0,\kappa_1>1,\kappa_2\geq 0$. Then for any $x\in \S^{N - 1}$,
\begin{align*}
    \Bigg| \sum_{(i,j)\in \heavy(\gX)} \ermM_{ij} \ervx_{i} \ervx_{j} \Bigg|\leq \alpha_0\sqrt{d},
\end{align*}
where
$
    \alpha_0=16+32C(1+\kappa_1)+64\kappa_2  (1+\frac{2}{\kappa_1\log\kappa_1}).
$
\end{lemma}

\Cref{lem:max_degree_partial} proves that $\rmA$ has bounded row and column sums with high probability.
\begin{lemma}\label{lem:max_degree_partial}
For any $\theta>0$, there is a constant $\alpha_1>0$ such that with probability at least $1-N^{-\theta}$,
\begin{align}\label{eqn:bounded_degree_partial}
    \max_{1\leq i\leq N} \,\sum_{j=1}^{N} \ermA_{ij} \leq \alpha_1 d
\end{align}
with $\alpha_1= 4 + \frac{2(\ell - 1)(1+\theta)}{3c}$ and $d\geq c\log N$.
\end{lemma}
\begin{proof}
For a fixed $i\in [N]$,
\begin{subequations}
\begin{align*}
   &\sum_{j=1}^{N} \ermA_{ij} =\sum_{\ell \in \sL}\sum_{e\in \gE_{\ell}: i\in e} (\ell - 1)\etA_{e}^{(\ell)}\,, \quad \, \sum_{j=1}^{N} (\ermA_{ij} - \E \ermA_{ij}) =\sum_{\ell \in \sL}\sum_{e\in \gE_{\ell}: i\in e} (\ell - 1)\etW_{e}^{(\ell)}\,, \\
    &\sum_{j=1}^{N}\E \ermA_{ij} \leq \sum_{\ell \in \sL} \binom{N - 1}{\ell-1} \frac{(\ell - 1)d_{\ell}}{\binom{N - 1}{\ell-1}}=d,\\ 
   & \sum_{\ell \in \sL} (\ell - 1)^2\sum_{e\in \gE_{\ell}: i\in e} \E[(\etW_{e}^{(\ell)})^2] \leq \sum_{\ell \in \sL} (\ell - 1)^2\sum_{e\in \gE_{\ell}: i\in e} \E[\etA_{e}^{(\ell)}]\leq (\ell - 1)d\,. 
\end{align*}
\end{subequations}
Then for $\alpha_1= 4 + \frac{2(\ell - 1)(1+\theta)}{3c}$, by Bernstein's inequality, with the assumption that $d\geq c\log N$,
\begin{align} 
    \P\Bigg (\sum_{j=1}^{N} \ermA_{ij}\geq \alpha_1 d \Bigg) &\leq \P \Bigg(\sum_{j=1}^{N} \ermA_{ij} - \E \ermA_{ij}\geq (\alpha_1-1) d\Bigg) \notag\\
    &\leq \exp \Bigg( -\frac{\frac{1}{2}(\alpha_1-1)^2 d^2}{(\ell - 1)d+\frac{1}{3}(\ell - 1)(\alpha_1-1)d}\Bigg) 
    \leq N^{-\frac{3c(\alpha_1-1)^2}{(\ell - 1)(2\alpha_1+4)}}\leq N^{-1-\theta}.
\end{align}
Taking a union bound over $i\in [N]$, then \eqref{eqn:bounded_degree_partial} holds with probability $1-N^{-\theta}$.
\end{proof}

Now we are ready to obtain \eqref{eqn:Osqrtd_partial}.
\begin{lemma}
For any $\theta > 0$,  there is a constant $\beta$ depending on $\theta, c, \LM$ such that with probability at least $1-2N^{-\theta}$,
 \begin{align}\label{eqn:betasqrtd_partial}
  \Bigg|\sum_{(i,j)\in \mathcal H(\gX)}  \ermA_{ij}\ervx_{i} \ervx_{j}\Bigg|\leq \beta \sqrt{d}.   
 \end{align}
\end{lemma}
\begin{proof}
By \Cref{lem:UUTPforA_partial}, $\rmA$ satisfies $\textnormal{\bf{UUTP}}(\frac{1}{\ell(\ell - 1)},0)$. From \eqref{eqn:EA_partial} and \Cref{lem:DPforM_partial}, the property $\textnormal{\bf{DP}}(\delta, \kappa_1,\kappa_2)$ holds for $\rmA$ with probability at least $1-N^{-\theta}$ with 
\begin{align*}
    \delta = \frac{d}{N}, \quad \kappa_1=e^2,\quad  \kappa_2=2\ell(\ell - 1)(\theta+4).
\end{align*}
Let $\gE_1$ be the event that $\textnormal{\textbf{DP}}(\delta, \kappa_1,\kappa_2)$ holds for $\rmA$. Let $\gE_2$ be the event that all row sums of $\rmA$ are bounded by $\alpha_1 d$. Then
$
    \P (\gE_1 \cap \gE_2)\geq 1-2N^{-\theta}.
$
On the event $\gE_1 \cap \gE_2$, by \Cref{lem:heavy_bound_partial}, \eqref{eqn:betasqrtd_partial} holds with 
$\beta= \alpha_0\alpha_1$, where 
 \begin{align*}
    &\alpha_0= 16+32(1+e^2)+128\ell(\ell - 1)(\theta + 4) (1+e^{-2}), 
    \quad \alpha_1= 4+\frac{2(\ell - 1)(1+\theta)}{3c}.
\end{align*} 
\end{proof}

\subsection{Proof of \Cref{thm:concentration_partial}}
\begin{proof}
From \eqref{eqn:lightbound_partial}, with probability at least $1-2e^{-N}$, the contribution from light pairs in \eqref{eqn:contribution_partial} is bounded by  $2\alpha \sqrt{d}$ with $\alpha=5\ell(\ell - 1)$. From \eqref{eqn:heavy_exp_partial} and  \eqref{eqn:betasqrtd_partial}, with probability at least $1-2N^{-\theta}$, the contribution from heavy pairs in \eqref{eqn:contribution_partial} is bounded by $2\sqrt{d}+2\beta\sqrt{d}$.  Therefore with probability at least $1-2e^{-N}-2N^{-\theta}$, 
\[
    \|\rmA - \E \rmA \|\leq \const_{\LM} \sqrt{d}, 
\]
where $\const_{\LM} $ is a constant depending only on $c, \theta, \LM$ such that
$
    \const_{\LM} = 2(\alpha+1+\beta).
$ 
In particular, we can take $\alpha=5\ell(\ell - 1),$ $\beta=512\ell(\ell - 1)(\theta + 5)\left( 2+\frac{(\ell - 1)(1+\theta)}{c}\right)$, and
$
    \const_{\LM} =512\ell(\ell - 1)(\theta + 6)\left( 2+\frac{(\ell - 1)(1+\theta)}{c}\right).
$
This finishes the proof of \Cref{thm:concentration_partial}.
\end{proof}

\subsection{Proof of \Cref{thm:regularization_concentration_partial}}

Let $\setS\subset [N]$ be any given subset.  From \eqref{eqn:lightbound_partial}, with probability at least $1-2e^{-N}$,
\begin{align}\label{eqn:S_partial}
    \sup_{\rvx \in \sN }  \Bigg| \sum_{(i,j)\in \light(\rvx)}{\Bigg(\rmA_{\setS} - \E \rmA_{\setS}\Bigg)}_{ij} \ervx_{i} \ervx_{j}\Bigg|\leq 5\ell(\ell - 1)\sqrt{d}.
\end{align}
Since there are at most $2^{N}$ many choices for $\setS$, by taking a union bound, with probability at least $1-2(e/2)^{-N}$, we have for all $\setS\subset [N]$, \eqref{eqn:S_partial} holds. In particular, by taking $\setS = \sI = \{i\in [N]: \textnormal{row}(i)\leq \tau d\}$, with probability at least $1-2(e/2)^{-N}$, we have
\begin{align}\label{eqn:reg1_partial}
    \sup_{\rvx \in \sN}  \Bigg| \sum_{(i,j)\in \light(\rvx)}[(\rmA - \E \rmA)_{\sI}]_{ij} \ervx_{i} \ervx_{j}\Bigg|\leq 5\ell(\ell - 1)\sqrt{d}.
\end{align}
Similar to \eqref{eqn:heavy_exp_partial}, deterministically,
\begin{align}\label{eqn:reg2_partial}
     \Bigg|\sum_{(i,j)\in \heavy(\rvx)} [(\E \rmA)_{\sI}]_{ij}\ervx_{i} \ervx_{j}\Bigg|\leq (\ell - 1)\sqrt{d}.
\end{align}
Next we show the contribution from heavy pairs for $\rmA_{\sI}$ is bounded.
\begin{lemma}
For any $\theta > 0$,  there is a constant $\beta_{\tau}$ depending on $\theta, c, \LM,\tau$ such that with probability at least $1-N^{-\theta}$,
 \begin{align}\label{eq:betasqrtd_tau}
  \Bigg|\sum_{(i,j)\in \heavy(\rvx)}  [(\rmA)_{\sI}]_{ij}\ervx_{i} \ervx_{j}\Bigg|\leq \beta_{\tau} \sqrt{d}.   
 \end{align}
\end{lemma}
\begin{proof}
Note that $\rmA$ satisfies $\textnormal{\bf{UUTP}}\left(\frac{1}{\ell(\ell - 1)},0 \right)$ from \Cref{lem:UUTPforA_partial}. According to \Cref{lem:DPforM_partial}, with probability at least $1-N^{-\theta}$, $\textnormal{\bf{DP}}(\delta,\kappa_1,\kappa_2)$ holds for $\rmA$ with
\begin{align*}
    \delta =\frac{d}{N}, \quad \kappa_1=e^2,\quad  \kappa_2=2\ell(\ell - 1)(\theta+4).
\end{align*}
The $\textnormal{\bf{DP}}(\delta,\kappa_1,\kappa_2)$ property holds for $\rmA_{\sI}$ as well, since $\rmA_{\sI}$ is obtained from $\rmA$ by restricting to $\sI$. Note that all row sums in $\rmA_{\sI}$ are bounded by $\tau d$. By \Cref{lem:heavy_bound_partial},
\begin{align}\label{eq:reg3}
    \Bigg| \sum_{(i,j)\in \heavy(\rvx)} [\rmA_{\sI}]_{ij}\ervx_{i} \ervx_{j} \Bigg|\leq \alpha_0\sqrt{\tau d},
\end{align}
where we can take
$
    \alpha_0=16+\frac{32}{\tau}(1+e^2)+128\ell(\ell - 1)(\theta+4)\left (1+\frac{1}{e^2}\right).
$
\end{proof}
We can then take $\beta_{\tau}=\alpha_0\sqrt{\tau}$ in \Cref{eq:betasqrtd_tau}. Therefore, combining \eqref{eqn:reg1_partial}, \eqref{eqn:reg2_partial}, \Cref{eq:reg3}, with probability at least $1-2(e/2)^{-N}-N^{-\theta}$, there exists a constant $C_{\tau}$ depending only on $\tau, M, K$ such that 
$
    \|(\rmA-\E \rmA)_{\sI}\|\leq \const_{\tau} \sqrt{d},
$
where $\const_{\tau}=2( (5\LM + 1)(\ell - 1)+\alpha_0\sqrt{\tau})$. This finishes the proof of \Cref{thm:regularization_concentration_partial}.

\section{Deferred proofs}

\subsection{Proofs of Lemmas in Section \ref{sec:partial_prelim}}
\begin{proof}[Proof of \Cref{lem:general_eigen_value_approximation}]
By Weyl's inequality (\Cref{lem:weyl}), the difference between eigenvalues of $\widetilde{\E \rmA}$ and $\E \rmA$ can be upper bounded by
\begin{subequations}
\begin{align*}
    |\lambda_{j}( \widetilde{\E \rmA} ) - \lambda_{j}(\E \rmA)| & \leq \|\widetilde{\E \rmA} - \E \rmA \|_2 \leq \|\widetilde{\E \rmA} - \E \rmA \|_{\frob}\\
    &\leq \left[2K \cdot \frac{N}{K} \cdot \sqrt{N}\log(N) \cdot (\alpha - \beta)^2 \right]^{1/2}= O \Big(N^{3/4}\log^{1/2}(N)(\alpha - \beta)\Big).
\end{align*}
\end{subequations}
The lemma follows, as $\lambda_{j}(\E \rmA) = \Omega \left ( N (\alpha - \beta) \right )$ for all $1 \leq j \leq K$. 
\end{proof}

\subsection{Proofs of Lemmas in \Cref{subsec:spectral_partition_proof}}
\begin{proof}[Proof of \Cref{lem:singular_value_approximation}]
We first compute the singular values of $\overline{\rmB}_1$. From \eqref{eqn:bar_B1}, the rank of matrix $\overline{\rmB}_1$ is $K$, and the least non-trivial singular value of $\overline{\rmB}_1$ is
\begin{align*}
    \sigma_{K} (\overline{\rmB}_1) = \frac{N}{2\sqrt{2}K}(\overline{\alpha} - \overline{\beta}) = \frac{N}{2\sqrt{2}K}\sum_{\ell \in \sL} \binom{\frac{3N}{4K} - 2}{\ell - 2} \frac{a_{\ell} - b_{\ell}}{ \binom{N - 1}{\ell-1}}\,,
\end{align*}
where $\sL$ is obtained from \Cref{alg:parameter_preprocessing}. By the definition of $\overline{\rmA}_1$ in \eqref{eqn:bar_A1A2}, the least non-trivial singular value  of $\overline{\rmA}_1$ is
\begin{align*}
    \sigma_{K} (\overline{\rmA}_1) = \sigma_{K} (\overline{\rmB}_1) = \frac{N}{2\sqrt{2}K}(\overline{\alpha} - \overline{\beta}) = \frac{N}{2\sqrt{2}K}\sum_{\ell \in \sL} \binom{\frac{3N}{4K} - 2}{\ell - 2} \frac{a_{\ell} - b_{\ell}}{ \binom{N - 1}{\ell-1}}\,.
\end{align*}
Recall that $N_i$, defined in \eqref{eqn:dimension_ZcapVi}, denotes the number of vertices in $\gZ\cap \gV_i$, which can be written as  $N_i = \sum_{v\in \gV_i} \indi{v \in Z}$. The expectation of $N_i$ is $\E N_i = N/(2K)$. By Hoeffding's \Cref{lem:Hoeffding}, 
\begin{align*}
    \P\left( \bigg|N_i - \frac{N}{2K}\bigg| \geq \sqrt{N}\log(N) \right) \leq 2\exp \left( - K\log^2 (N)\right)\,. 
\end{align*}
Similarly, $\widetilde{N}_{i}$, defined in \eqref{eqn:dimension_Y1capVi}, satisfies
\begin{align}
    \P\left( \bigg|\widetilde{N}_{i} - \frac{N}{4K}\bigg| \geq \sqrt{N}\log(N) \right) \leq 2\exp \left( - K\log^2 (N)\right)\,. \notag
\end{align}
As defined in \eqref{eqn:tilde_B1} and \eqref{eqn:bar_B1}, both $\widetilde{\rmB}_1$ and $\overline{\rmB}_1$ are deterministic block matrices. Then with probability at least $1 - 2K\exp \left( - K\log^2 (N)\right)$, the dimensions of each block inside $\widetilde{\rmB}_1$ and $\overline{\rmB}_1$ are approximately the same, with deviations up to $\sqrt{N}\log(N)$.
Consequently, the matrix $\widetilde{\rmA}_1$, which was defined in \eqref{eqn:tilde_A1A2}, can be treated as a perturbed version of $\overline{\rmA}_1$.  By Weyl's inequality (\Cref{lem:weyl}), for any $i\in [K]$,
\begin{subequations}
\begin{align*}
    |\sigma_i(\overline{\rmB}_1) - \sigma_i(\widetilde{\rmB}_1)| &=  |\sigma_i(\overline{\rmA}_1) - \sigma_i(\widetilde{\rmA}_1)|\leq \|\overline{\rmA}_1 - \widetilde{\rmA}_1\|_2 \leq \|\overline{\rmA}_1 - \widetilde{\rmA}_1\|_{\frob}\\
    &\leq \left[2K \cdot \frac{N}{K} \cdot \sqrt{N}\log(N) \cdot (\overline{\alpha} - \overline{\beta})^2 \right]^{1/2}= O \left(N^{3/4}\log^{1/2}(N) \cdot (\overline{\alpha} - \overline{\beta})\right).
\end{align*}
\end{subequations}
As a result, with probability at least $1 - 2K\exp( -K \log^2(N))$, we have
\begin{align*}
    \frac{|\sigma_{K}(\overline{\rmA}_1) - \sigma_{K}(\widetilde{\rmA}_1)|}{\sigma_{K}(\overline{\rmA}_1)} = \frac{|\sigma_{K}(\overline{\rmB}_1) - \sigma_{K}(\widetilde{\rmB}_1)|}{\sigma_{K}(\overline{\rmB}_1)} = O\left(N^{-1/4}\log^{1/2}(N)\right).
\end{align*}
Therefore, singular values of $\widetilde{\rmA}_1$ can be approximated by singular values of $\overline{\rmA}_1$ with vanishing errors.
\end{proof}

\begin{proof}[Proof of \Cref{lem:high_degree_vertices_k}]

Let $\gX\subset \gV$ be a subset of vertices in hypergraph $\gH = (\gV, \gE)$ with size $|\gX| = cN$ for some $c\in (0,1)$ to be decided later.
Suppose $\gX$ is a set of vertices with high degrees that we want to zero out. 
 We first count the $\ell$-uniform hyperedges on $\gX$ separately, then weight them by $(\ell - 1)$, and finally sum over $\ell$ to compute the row sums in $\rmA$  corresponding to each vertex in $\gX$.  Let $\gE_{\ell}(\gX)$ denote the set of $\ell$-uniform hyperedges with all vertices located in $\gX$, and $\gE_{\ell} (\gX^{\complement})$ denote the set of $\ell$-uniform hyperedges with all vertices in $\gX^{\complement}=  \gV\setminus X$, respectively. Let $\gE_{\ell}(\gX, \gX^{\complement})$ denote the set of $\ell$-uniform hyperedges with at least $1$ endpoint in $\gX$ and $1$ endpoint in $\gX^{\complement}$. 
The relationship between total row sums and the number of non-uniform hyperedges in the vertex set $\gX$ can be expressed as
    \begin{align}\label{eqn:non_uniform_hyperedges_in_X}
        \sum_{v \in \gX} \mathrm{row}(v) \leq &\, \sum_{\ell \in \sL} (\ell - 1)\Big( \ell |\gE_{\ell}(\gX)| + (\ell - 1)|\gE_{\ell}(\gX, \gX^{\complement})| \Big) 
    \end{align} 
If the row sum of each vertex $v \in \gX$ is at least $20 \LM d$, where $d \coloneqq \sum_{\ell \in \sL} (\ell - 1)a_{\ell}$, it follows
\begin{equation}\label{eq:lowerboundEM}
\sum_{\ell \in \sL} (\ell - 1)\Big(\ell |\gE_{\ell}(\gX)| + (\ell - 1)|\gE_{\ell}(\gX, \gX^{\complement})| \Big) \geq cN\cdot(20 \LM d)\,.
\end{equation}
Then either 
\begin{align*}
    \sum_{\ell \in \sL} \ell(\ell - 1) |\gE_{\ell}(\gX)| \geq 4 \LM cN d, \quad \text{or} \quad  \sum_{\ell \in \sL} (\ell - 1)^2|\gE_{\ell}(\gX, \gX^{\complement})| \geq 16 \LM cN d.  \notag 
\end{align*}

\paragraph{Concentration of $\sum_{\ell \in \sL} \ell(\ell - 1)|\gE_{\ell}(\gX)|$} 

Recall that $\big|\gE_{\ell}(\gX)\big|$ denotes the number of $\ell$-uniform hyperedges with all vertices located in $\gX$, which can be viewed as the sum of independent Bernoulli random variables $\rT_{e}^{(a_{\ell})}$ and $\rT_{e}^{(b_{\ell})}$ given by
    \begin{equation}\label{eqn:EmX_Bernoulli_partial}
        \rT_{e}^{(a_{\ell})}\sim \mathrm{Bernoulli}\left( \frac{a_{\ell}}{ \binom{N - 1}{\ell -1} }\right), \quad \rT_{e}^{(b_{\ell})}\sim \mathrm{Bernoulli}\left( \frac{b_{\ell}}{ \binom{N - 1}{\ell -1} }\right)\,.
    \end{equation}
Let \{$\gV_{1}, \dots, \gV_{K}$\} be the true partition of $\gV$. Suppose that there are $\eta_{i} cN$ vertices in block $\gV_{i}\cap \gX$ for each $i\in [K]$ with restriction $\sum_{i=1}^{K}\eta_{i} = 1$, then  $\big|\gE_{\ell}(\gX)\big|$ can be written as
\begin{align}
   \big|\gE_{\ell}(\gX)\big| = \sum_{e\in \gE_{\ell}(\gX, a_{\ell})} \rT_{e}^{(a_{\ell})} \, + \sum_{e\in \gE_{\ell}(\gX, b_{\ell})} \rT_{e}^{(b_{\ell})}\,, \notag 
\end{align}
where $\gE_{\ell}(\gX, a_{\ell}) \coloneqq \cup_{i=1}^{K}\gE_{\ell}(\gV_i\cap \gX)$ denotes the union for sets of hyperedges with all vertices in the same block $\gV_i\cap \gX$ for some $i\in [K]$, and
\begin{align}
    \gE_{\ell}(\gX, b_{\ell}) \coloneqq \, \gE_{\ell}(\gX) \setminus \gE_{\ell}(\gX, a_{\ell}) = \gE_{\ell}(\gX) \setminus \Big( \cup_i^{K} \gE_{\ell}(\gV_i\cap \gX) \Big) \notag 
\end{align}
denotes the set of hyperedges with vertices crossing different $\gV_i\cap \gX$. 
We can compute the expectation of $\big|\gE_{\ell}(\gX)\big|$ as 
    \begin{equation}\label{eqn:E_gX_partial}
        \E |\gE_{\ell}(\gX)| = \sum_{i=1}^{K} \binom{\eta_{i} cN}{\ell} \frac{a_{\ell} - b_{\ell} }{ \binom{N - 1}{\ell -1} } + \binom{cN}{\ell} \frac{b_{\ell}}{ \binom{N - 1}{\ell -1} }.
    \end{equation}
Then
\begin{align}\label{eqn:expectation_EX_partial}
    \sum_{\ell \in \sL} \ell(\ell - 1) \cdot \E |\gE_{\ell}(\gX)| & = \sum_{\ell \in \sL} \ell(\ell - 1) \Bigg[ \sum_{i=1}^{K} \binom{\eta_{i} cN}{\ell} \frac{a_{\ell} - b_{\ell} }{ \binom{N - 1}{\ell -1} } + \binom{cN}{\ell} \frac{b_{\ell}}{ \binom{N - 1}{\ell -1} } \Bigg].
\end{align}
As $\sum\limits_{i=1}^{K} \eta_i=1$, it follows that 
$
\sum_{i=1}^{K} \binom{\eta_i c N}{\ell} \leq \binom{cN}{\ell}
$ by induction, thus
\[
 \frac{a_{\ell} - b_{\ell} }{ \binom{N - 1}{\ell -1} } \sum_{i=1}^{K} \binom{\eta_{i} cN}{\ell}  +  \frac{b_{\ell}}{ \binom{N - 1}{\ell -1} } \binom{cN}{\ell} =  \frac{a_{\ell}}{ \binom{N - 1}{\ell -1} } \sum_{i=1}^{K} \binom{\eta_{i} cN}{\ell}  + \frac{b_{\ell}}{ \binom{N - 1}{\ell -1} } \left( \binom{cN}{\ell} - \sum_{i=1}^{K} \binom{\eta_{i} cN}{\ell} \right)  
\]
where both terms on the right are  positive numbers. Using this and taking $b_{\ell} = a_{\ell}$, we obtain the following upper bound for all $N$,
\begin{align*}
 \sum_{\ell \in \sL} \ell(\ell - 1) \E |\gE_{\ell}(\gX)|\leq \sum_{\ell \in \sL} \ell(\ell - 1)\binom{cN}{\ell} \frac{a_{\ell}}{\binom{N - 1}{\ell-1}} \leq cN\sum_{\ell \in \sL} (\ell - 1)a_{\ell} =cNd \,.
\end{align*}
Note that $\sum_{\ell \in \sL} \ell(\ell - 1) |\gE_{\ell}(\gX)|$ is a weighted sum of independent Bernoulli random variables (corresponding to hyperedges), each upper bounded by $\LM^2$. Also, its variance is bounded by
\begin{align}
    \sigma^2\coloneqq \,& \Var\left(\sum_{\ell \in \sL} \ell(\ell - 1) |\gE_{\ell}(\gX)|\right)=\sum_{\ell \in \sL} \LM^2(\ell - 1)^2 \Var\left( |\gE_{\ell}(\gX)|\right) \notag \\
    \leq\,& \sum_{\ell \in \sL} \LM^2(\ell - 1)^2\E |\gE_{\ell}(\gX)|\leq \LM^2cNd. \notag 
\end{align}
We can apply Bernstein's \Cref{lem:Bernstein} and obtain
\begin{align}
  \,& \P\left( \sum_{\ell \in \sL} \ell(\ell - 1) |\gE_{\ell}(\gX)| \geq 4 \LM cN d \right)\notag \\\leq \,& \P\left( \sum_{\ell \in \sL} \ell(\ell - 1)( |\gE_{\ell}(\gX)| -\E|\gE_{\ell}(\gX)| )\geq 3 \LM cN d \right)\notag\\
\leq \,& \exp \left(-\frac{(3 \LM cN d)^2}{\LM^2cNd + \LM^2cNd/3} \right) \leq \exp(-6cNd)\,.\label{eqn:chernoff_result_partial}
\end{align}

\paragraph{Concentration of  $\sum_{\ell \in \sL} (\ell - 1)^2|\gE_{\ell}(\gX, \gX^{\complement})|$}

For any finite set $\setS$, let $[\setS]^{j}$ denote the family of $j$-subsets of $S$, i.e., $[\setS]^{j} = \{\gZ| \gZ\subseteq \setS, |\gZ| = j \}$. Let $\gE_{\ell}([\gY]^{j}, [\gZ]^{\ell - j})$ denote the set of $\ell$-hyperedges, where $j$ vertices are from $\gY$ and $\ell - j$ vertices are from $\gZ$ within each $\ell$-hyperedge. We want to count the number of $\ell$-hyperedges between $\gX$ and $\gX^{\complement}$, according to the number of vertices located in $\gX^{\complement}$ within each $\ell$-hyperedge. Suppose that there are $j$ vertices from $\gX^{\complement}$ within each $\ell$-hyperedge for some $1\leq j \leq \ell - 1$.
\begin{enumerate}[label=(\roman*)]
     \item Assume that all those $j$ vertices are in the same $[\gV_{i}\setminus \gX]^{j}$. If the remaining $\ell - j$ vertices are from $[\gV_{i}\cap \gX]^{\ell - j}$, then this $\ell$-hyperedge is connected with probability $a_{\ell} / \binom{N - 1}{\ell-1}$, otherwise $b_{\ell} / \binom{N - 1}{\ell-1}$. The number of this type $\ell$-hyperedges can be written as
    \begin{align}
        \sum_{i=1}^{K} \left[ \sum_{e\in \gE^{(a_{\ell})}_{j, i}} \rT_{e}^{(a_{\ell})} + \sum_{e\in \gE^{(b_{\ell})}_{j, i}} \rT_{e}^{(b_{\ell})} \right]\,, \notag 
    \end{align} 
    where $\gE^{(a_{\ell})}_{j, i}\coloneqq \gE_{\ell}([\gV_i \cap \gX^{\complement}]^{j}, [\gV_i\cap \gX]^{\ell - j})$, and
        \begin{align}
            \gE^{(b_{\ell})}_{j, i} \coloneqq \gE_{\ell} \Big([\gV_i\cap \gX]^{\complement}]^{j}, \,\, [\gX]^{\ell - j} \setminus [\gV_i\cap \gX]^{\ell - j} \Big) \notag 
        \end{align}
    denotes the set $\ell$-hyperedges with $j$ vertices in $[\gV_i\cap \gX^{\complement}]^{j}$ and the remaining $\ell - j$ vertices  in $[\gX]^{j}\setminus [\gV_i\cap \gX]^{j}$. We compute all possible choices and upper bound the cardinality of $\gE^{(a_{\ell})}_{j, i}$ and $\gE^{(b_{\ell})}_{j, i}$ by
        \begin{align}
            \big|\gE^{(a_{\ell})}_{j, i} \big| \leq &\, \binom{ (\frac{1}{K} - \eta_{i}c)N}{j} \binom{\eta_{i} cN}{\ell - j}\,,\quad \big|\gE^{(b_{\ell})}_{j, i} \big| \leq \binom{ (\frac{1}{K} - \eta_{i}c)N}{j} \left[ \binom{cN}{\ell - j} - \binom{\eta_{i} cN}{\ell - j} \right]\,. \notag 
        \end{align}
        
    \item If those $j$ vertices in $[\gV\setminus \gX]^{j}$ are not in the same $[\gV_i \cap \gX]^{j}$ (which only happens $j \geq 2$), then the number of this type hyperedges can be written as $\sum_{e\in \gE^{(b_{\ell})}_{j}} \rT_{e}^{(b_{\ell})}$, where 
    \begin{subequations}
        \begin{align*}
            \gE^{(b_{\ell})}_{j} \coloneqq&\, \gE_{\ell} \Big([V\setminus \gX]^{j}\setminus \big( \cup_{i=1}^{K}[\gV_i\setminus \gX]^{j} \big), \,\,\, [\gX]^{\ell - j}\Big)\,,\\
            \big| \gE^{(b_{\ell})}_{j} \big| \leq &\, \left[ \binom{(1-c)N}{j} - \sum_{i=1}^{K} \binom{( \frac{1}{K} - \eta_{i}c)N}{j} \right] \binom{cN}{\ell - j}\,.
        \end{align*}
    \end{subequations}
\end{enumerate}
Therefore, $|\gE_{\ell}(\gX, \gX^{\complement})|$ can be written as a sum of independent Bernoulli random variables,
\begin{align}\label{eq:EXmcBernoulli}
    |\gE_{\ell}(\gX, \gX^{\complement})| = \sum_{j=1}^{\ell - 1}\sum_{i=1}^{K} \left[ \sum_{e\in \gE^{(a_{\ell})}_{j, i}} \rT_{e}^{(a_{\ell})} + \sum_{e\in \gE^{(b_{\ell})}_{j, i}} \rT_{e}^{(b_{\ell})} \right] + \sum_{j=2}^{\ell - 1}\sum_{e\in \gE^{(b_{\ell})}_{j}} \rT_{e}^{(b_{\ell})}\,.
\end{align}
Note that $\binom{(1-c)N}{1} = \sum_{i=1}^{K}\binom{(1/k - \eta_i c)N}{1}$ and $\big| \gE^{(b_{\ell})}_{1} \big| = 0$. Then $\E (|\gE_{\ell}(\gX, \gX^{\complement})|)$ can be computed as follows:
\begin{align}
    &\, \E \left(|\gE_{\ell}(\gX, \gX^{\complement})| \right) \label{eqn:regrouping_EXXc}\\
    =&\, \sum_{j=1}^{\ell - 1} \sum_{i=1}^{K} \binom{( \frac{1}{K} - \eta_{i}c)N}{j} \Bigg\{ \binom{\eta_{i} cN}{\ell - j} \frac{a_{\ell}}{ \binom{N - 1}{\ell-1} } + \bigg[ \binom{cN}{\ell - j} - \binom{\eta_{i} cN}{\ell - j} \bigg] \frac{b_{\ell}}{ \binom{N - 1}{\ell-1} } \Bigg\} \notag \\
    \,& + \sum_{j=1}^{\ell - 1} \Bigg[ {(1 - c)N \choose j} - \sum_{i=1}^{K}{( \frac{1}{K} - \eta_{i}c)N \choose j} \Bigg] {cN \choose \ell - j} \frac{b_{\ell}}{ \binom{N - 1}{\ell-1} } \notag \\ 
    =\,& \sum_{j=1}^{\ell - 1} \sum_{i=1}^{K}\binom{( \frac{1}{K} - \eta_{i}c)N}{j} \binom{\eta_{i} cN}{\ell - j} \frac{a_{\ell} - b_{\ell}}{ \binom{N - 1}{\ell-1} } + \sum_{j=1}^{\ell - 1} \binom{(1-c)N}{j} \binom{cN}{\ell - j} \frac{b_{\ell}}{ \binom{N - 1}{\ell-1} }\notag\\
    =&\, \sum_{i=1}^{K} \Bigg[ \binom{\frac{N}{K}}{\ell} -  \binom{\eta_{i} cN}{\ell} - \binom{(\frac{1}{K} - \eta_{i}c)N}{\ell} \Bigg]\frac{a_{\ell} - b_{\ell}}{\binom{N - 1}{\ell -1} } + \Bigg[ \binom{N}{\ell} - \binom{cN}{\ell} - \binom{(1 - c)N}{\ell} \Bigg] \frac{b_{\ell}}{\binom{N - 1}{\ell -1} }\notag\,,
\end{align}
where we used the fact $\binom{(1-c)N}{1} = \sum_{i=1}^{K}\binom{(1/k - \eta_i c)N}{1}$ in the first equality and Vandermonde's identity $\binom{N_{1} + N_{2}}{\ell} = \sum_{j=0}^{\ell}\binom{N_{1}}{j} \binom{N_{2}}{\ell - j}$ in last equality. Note that 
\[
f_c \coloneqq \binom{N}{\ell} - \binom{cN}{\ell} - \binom{(1 - c)N}{\ell}
\]
counts the number of subsets of $\gV$ with $\ell$ elements such that at least one element belongs to $\gX$ and at least one element belongs to $\gX^{\complement}$. On the other hand,
\[
g_c = \sum_{i=1}^{K} \left[ \binom{\frac{N}{K}}{\ell} -  \binom{\eta_{i} cN}{\ell} - \binom{(\frac{1}{K} - \eta_{i}c)N}{\ell} \right]\,.
\]
counts the number of subsets of $\gV$ with $\ell$ elements such that all elements belong to a single $\gV_i$, and given such an $i$, that at least one element belongs to $\gX \cap \gV_i$ and at least one belongs to $\gX^{\complement} \cap \gV_i$. 
\begin{figure}[ht]
    \centering
    \includegraphics[width= 0.4\linewidth]{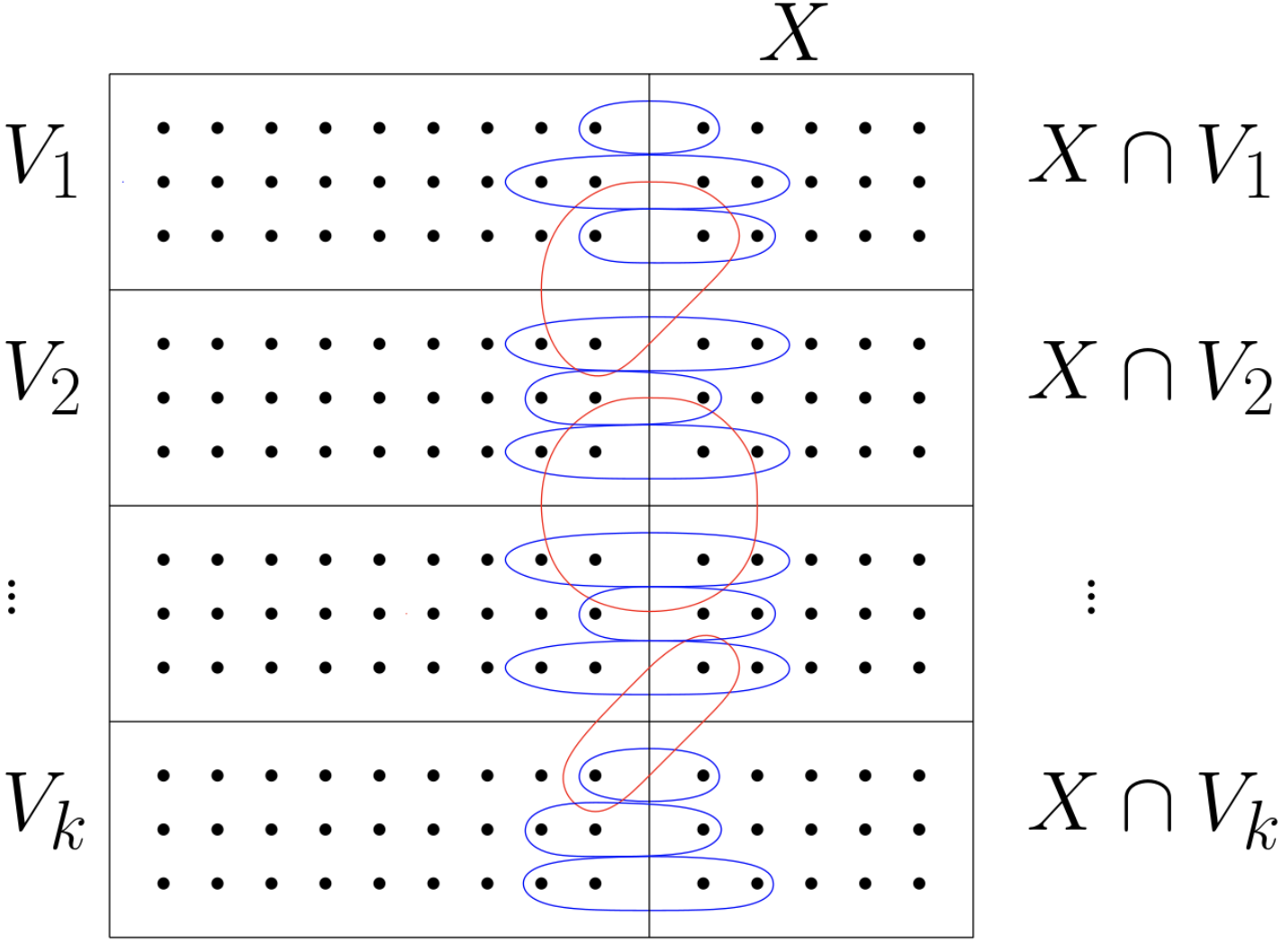}
    \caption{Comparison of $f_c$ and $g_c$}
    \label{fig:comparison_fcgc}
\end{figure}
As in \Cref{fig:comparison_fcgc}, $g_{c}$ only counts the blue pairs while $f_c$ counts red pairs in addition. By virtue of the fact that there are fewer conditions imposed on the sets included in the count for $f_c$, we must have $f_c \geq g_c$. Thus, rewriting \eqref{eqn:regrouping_EXXc}, we obtain
\begin{align*}
\E (|\gE_{\ell}(\gX, \gX^{\complement})|) &\,= g_c \frac{a_{\ell}}{\binom{N - 1}{\ell -1} } + (f_c-g_c) \frac{b_{\ell}}{\binom{N - 1}{\ell -1} }\,~.
\end{align*}
Since both terms in the above sum are positive, we can upper bound by taking $a_{\ell}=b_{\ell}$ to obtain
\begin{align}
    \E \left( |\gE_{\ell}(\gX, \gX^{\complement})| \right) \leq f_c  \frac{a_{\ell}}{\binom{N - 1}{\ell -1} } = \Bigg[ \binom{N}{\ell} - \binom{cN}{\ell} - \binom{(1 - c)N}{\ell} \Bigg] \frac{a_{\ell}}{\binom{N - 1}{\ell -1} } ~. \notag 
\end{align}
By summing over $\ell$, the expectation of $\sum_{\ell \in \sL}(\ell - 1)^2|\gE_{\ell}(\gX, \gX^{\complement})|$ satisfies
\begin{subequations}
\begin{align}
  \sum_{\ell \in \sL} (\ell - 1)^2\cdot \E \left(|\gE_{\ell}(\gX, \gX^{\complement})| \right)&\leq  \sum_{\ell \in \sL} (\ell - 1)^2\Bigg[ \binom{N}{\ell} - \binom{cN}{\ell} - \binom{(1 - c)N}{\ell} \Bigg] \frac{a_{\ell}}{\binom{N - 1}{\ell -1} }~, \notag \\
  &\leq 2N \sum_{\ell \in \sL}(1-c^{\ell}-(1-c)^{\ell})(\ell - 1)a_{\ell}\leq 8 \LM cN d\,, \notag 
\end{align}
\end{subequations}
where the last line holds when $c\in (0,  2^{1/\LM} - 1]$, since
\begin{align}
    &\, [(1 - c) + c]^{\ell} - c^{\ell} -(1-c)^{\ell} \notag\\
    = &\, \binom{\ell}{1}(1 - c)^{\ell - 1}c + \binom{\ell}{2}(1 - c)^{\ell - 2}c^2 + \cdots + \binom{\ell}{\ell - 1}(1 - c)^{1}c^{\ell - 1}\notag\\
    \leq &\, \binom{\ell}{1}c + \binom{\ell}{2}c^2 + \cdots + \binom{\ell}{\ell - 1}c^{\ell - 1}\leq (1 + c)^{\ell} - 1 \leq 2\ell c\,, \label{eqn:czero}
\end{align}
where the last inequality holds by the following Claim.
\begin{claim}
    Let $\ell \geq 2$ be some finite integer. Then for $0 < c <= 2^{1/\ell} - 1$, it follows that $(1 + c)^{\ell} - 1 \leq 2\ell c$.
\end{claim}
\begin{proof}[Proof of the Claim]
    We finish the proof by induction. First, the argument  $(1 + c)^{j} - 1 \leq 2jc$ holds true for the base cases $j = 1, 2$ . Suppose that the argument holds for the case $j \geq 2$. For the case $j + 1 \leq \ell$, it follows that
    \begin{align*}
        (1 + c)^{j + 1} - 1 = (1 + c)^{j} + c(1 + c)^{j} - 1 \leq 2jc + c(1 + c)^{j} \leq 2(j + 1)c,
    \end{align*}
    where the last inequality holds true if $c(1 + c)^{j} \leq 2c$, and it holds since $c \leq 2^{1/\ell} - 1 \leq 2^{1/j} - 1$ for all $j\leq \ell$. 
\end{proof}

Similarly, we apply Bernstein \Cref{lem:Bernstein} again with $K=\LM^2$, $\sigma^2\leq 8\LM^{3} cNd$ and obtain
\begin{align}
  & \, \P\left( \sum_{\ell \in \sL} (\ell - 1)^2 |\gE_{\ell}(\gX,\gX^{\complement})| \geq 16 \LM cN d \right) \notag\\
  \leq \,&\, \P\left( \sum_{\ell \in \sL} (\ell - 1)^2( |\gE_{\ell}(\gX,\gX^{\complement})| -\E|\gE_{\ell}(\gX,\gX^{\complement})|)\geq 8 \LM cN d \right) \leq \exp(-6cNd/\LM)\,. \label{eqn:chernoff_second_partial}
\end{align}
By the binomial coefficient upper bound $\binom{N}{K}\leq (\frac{eN}{K})^{K}$ for $1\leq K \leq N$, there are at most 
    \begin{align}\label{eqn:binomial_upper_bound_partial}
        \binom{N}{cN}\leq \left(\frac{e}{c}\right)^{cN}= \exp(-c(\log(c)-1)N)
    \end{align}
    many subsets $\gX$ of size $|\gX| = cN$. Let  $d$ be sufficiently large so that $d^{-3} \leq c_0$. Substituting $c = d^{-3}$ in \eqref{eqn:binomial_upper_bound_partial}, we have
        \begin{equation}
            \binom{N}{d^{-3}N} \leq \exp \left[ 3d^{-3} \log(d)N \right]. \notag 
        \end{equation}
Taking $c=d^{-3}$ in \eqref{eqn:chernoff_result_partial} and \eqref{eqn:chernoff_second_partial}, we obtain
\begin{align}
     & \P\left( \sum_{\ell \in \sL} (\ell - 1) (\ell|\gE_{\ell}(\gX)| + (\ell - 1)|\gE_{\ell}(\gX,\gX^{\complement})|)\geq 20 \LM d^{-2} N \right)\leq 2\exp(-2d^{-2}N/\LM)\,. \notag 
\end{align}
Taking a union bound over all possible $\gX$ with $|\gX|=d^{-3}N$, we obtain with probability at least $1-2\exp(3d^{-3}\log(d) N - 2d^{-2}N/\LM)\leq 1-2\exp(-d^{-2}N/\LM)$, no more than $d^{-3}N$ many vertices have total row sum greater than $20 \LM d$. Note that we have imposed the condition that $c = d^{-3} \in (0,  2^{1/\LM} - 1]$ in \eqref{eqn:czero}, thus $d\geq (2^{1/\LM} - 1)^{-1/3}$, producing the lower bound in \Cref{ass:regimes_partial}. 
\end{proof} 

\begin{proof}[Proof of \Cref{lem:projection_deviation_multiple}]
Throughout the proof, let $\pi(i)$ denote the membership of vertex $i \in \gV$. Let $\rvu_i(j)$ denote the $j$-th entry of the vector $\rvu_i \in \R^{N}$. Let $\<\rvu_{i}, \rve_{i}\>$ denote the inner product of two vectors.

Note that $\rmU$ is spanned by first $K$ singular vectors of $(\rmA_1)_{\sI_1}$. Let $\{\rvu_i\}_{i=1}^{K}$ be an orthonormal basis of the $\rmU$. For each fixed $i\in \gV_{\pi(i)} \cap \gY_2\cap \{i_1, \cdots, i_s\}$, the projection $\rmP_{\rmU} \coloneqq \sum_{k=1}^{K}\< \rvu_{k},\cdot\, \> \rvu_{k}$ is defined via
\begin{align*}
    \rmP_{\rmU}\rve_i =&\, \sum_{k=1}^{K}\< \rvu_{k}, \rve_i\> \rvu_{k}, \quad 
    \|\rmP_{\rmU}\rve_i\|_2^2 =\, \sum_{k=1}^{K} \< \rvu_{k}, \rve_i\>^2.
\end{align*}
As a consequence of independence between entries in $\rmA_1$ and entries in $\rmA_2$, defined in \eqref{eqn:A1A2}, it is known that $\{\rvu_{k}\}_{k=1}^{K}$ and $\rve_i$ are independent of each other, since $\rve_i$ are columns of $\rmE_2 \coloneqq \rmA_2 - \widetilde{\rmA}_2$. If the expectation is taken over $\{\tA^{(\ell)}\}_{\ell \in \sL}$ conditioning on $\{\rvu_{k}\}_{k=1}^{K}$, then
    \begin{align*}
        \E_{\{\tA^{(\ell)}\}_{\ell \in \sL}} \left[ \< \rvu_{k}, \rve_i\> \Big\vert \{\rvu_{k}\}_{k=1}^{K} \right]=&\, \sum_{j=1}^{N}\rvu_{k}(j) \cdot \E \left( \Big[ (\rmA_2)_{ji} - (\E\rmA_2)_{ji} \Big] \right) = 0\,,\\
        \E_{\{\tA^{(\ell)}\}_{\ell \in \sL}} \left[ \|\rmP_{\rmU}\rve_i\|_2^2 \Big\vert \{\rvu_{k}\}_{k=1}^{K} \right] =&\, \sum_{k=1}^{K} \E_{\{\tA^{(\ell)}\}_{\ell \in \sL}} \left[ \< \rvu_{k}, \rve_i\>^2 \Big\vert \{\rvu_{k}\}_{k=1}^{K} \right]\,,
    \end{align*}
where $\sL$ is obtained from \Cref{alg:parameter_preprocessing}. We expand each $\< \rvu_{k}, \rve_i\>^2$ and rewrite it into $2$ parts,
    \begin{align}
        \< \rvu_{k}, \rve_i\>^2 =&\, \sum_{j_1 = 1}^{K} \sum_{j_2 = 1}^{K} \rvu_{k}(j_1)\rve_i(j_1)\rvu_{k}(j_2)\rve_i(j_2) \notag \\
    =&\, \underbrace{\sum_{j = 1}^{K}[\rvu_{k}(j)]^2[\rve_i(j)]^2}_{(a)} + \underbrace{\sum_{j_1 \neq j_2} \rvu_{k}(j_1)\rve_i(j_1)\rvu_{k}(j_2)\rve_i(j_2)}_{(b)}\,,\quad \forall k\in [K]\,.\label{eqn:contribution_part_a_b}
    \end{align}
    Part $(a)$ is the contribution from graph, i.e., $2$-uniform hypergraph, while part $(b)$ is the contribution from $\ell$-uniform hypergraph with $\ell \geq 3$, which only occurs in hypergraph clustering. The expectation of part $(a)$ in \eqref{eqn:contribution_part_a_b} in is upper bounded by $\alpha$ as defined in \eqref{eqn:alpha_beta}, since for any $k\in [K]$,
    \begin{align*}
        &\E_{\{\tA^{(\ell)}\}_{\ell \in \sL}} \left[ \sum_{j = 1}^{K}[\rvu_{k}(j)]^2[\rve_i(j)]^2 \Bigg\vert \{\rvu_{k}\}_{k=1}^{K} \right] = \sum_{j=1}^{N}[\rvu_{k}(j)]^2 \cdot \Var\Big((\rmA_{2})_{ji} \Big) \\
       \leq\,& \sum_{j=1}^{N}[\rvu_{k}(j)]^2 \cdot (\E\rmA_{2})_{ji} \leq \alpha = \sum_{\ell \in \sL}\Bigg[ \binom{\frac{N}{K} -2}{\ell - 2} \frac{a_{\ell} - b_{\ell}}{\binom{N - 1}{\ell-1} } + \binom{N}{\ell - 2} \frac{b_{\ell}}{\binom{N - 1}{\ell-1} }\Bigg]\\
       \leq\,& \sum_{\ell \in \sL} \binom{N}{\ell - 2} \frac{a_{\ell}}{\binom{N - 1}{\ell-1} } \leq \frac{2}{N}\sum_{\ell \in \sL} (\ell - 1)a_{\ell} = \frac{2d}{N},
    \end{align*}
where $\|\rvu_{k}\|_2^2 = \sum_{j=1}^{N}[\rvu_{k}(j)]^2 = 1$. For part $(b)$ in \eqref{eqn:contribution_part_a_b},  
\begin{subequations}
    \begin{align*}
        &\, \E_{\{\tA^{(\ell)}\}_{\ell \in \sL}} \left[ \sum_{j_1 \neq j_2} \rvu_{k}(j_1)\rve_i(j_1)\rvu_{k}(j_2)\rve_i(j_2) \Bigg\vert \{\rvu_{k}\}_{k=1}^{K} \right]\\
        =&\, \sum_{j_1 \neq j_2} \rvu_{k}(j_1)\rvu_{k}(j_2) \E \left[ \Big( (\rmA_2)_{j_{1}i} -  (\E\rmA_2)_{j_{1}i} \Big) \Big( (\rmA_2)_{j_{2}i} - (\E\rmA_2)_{j_{2}i} \Big) \right]\\
        =&\, \sum_{j_1 \neq j_2} \rvu_{k}(j_1)\rvu_{k}(j_2) \E \Bigg(\sum_{\ell \in \sL}\,\,\sum_{\substack{e\in \gE_{\ell}[\gY_2\cup \gZ]\\ \{i,j_1\}\subset e} }\, (\etA_{e}^{(\ell)} - \E\etA_{e}^{(\ell)} ) \Bigg) \Bigg(\sum_{\ell \in \sL}\,\,\sum_{\substack{e\in \gE_{\ell}[\gY_2\cup \gZ]\\ \{i,j_2\}\subset e} }\, (\etA_{e}^{(\ell)} - \E\etA_{e}^{(\ell)} ) \Bigg).
    \end{align*}
\end{subequations}
Note that $\etA^{(\ell)}_{e_1}$ and $\etA^{(\ell)}_{e_2}$ are independent for distinct hyperedges $e_1 \neq e_2$, then only the terms with hyperedge $e\supset \{i, j_1, j_2\}$ have nonzero contribution. Then the expectation of part $(b)$ can be rewritten as
    \begin{align}
        &\, \E_{\{\tA^{(\ell)}\}_{\ell \in \sL}} \left[ \sum_{j_1 \neq j_2} \rvu_{k}(j_1)\rve_i(j_1)\rvu_{k}(j_2)\rve_i(j_2) \Bigg\vert \{\rvu_{k}\}_{k=1}^{K} \right]\notag \\
        = &\, \sum_{j_1 \neq j_2} \rvu_{k}(j_1)\rvu_{k}(j_2) \sum_{\ell \in \sL}\,\, \sum_{\substack{e\in \gE_{\ell}[\gY_2\cup \gZ]\\ \{i,j_1, j_2\}\subset e} } \E \big(\etA_{e}^{(\ell)} - \E\etA_{e}^{(\ell)} \big)^2 \notag\\
        \leq &\, \sum_{j_1 \neq j_2} \rvu_{k}(j_1)\rvu_{k}(j_2) \sum_{\ell \in \sL} \,\,\sum_{\substack{e\in \gE_{\ell}[\gY_2\cup \gZ]\\ \{i,j_1, j_2\}\subset e} } \E\etA_{e}^{(\ell)}\notag\\
        = &\sum_{j_1 \neq j_2} \rvu_{k}(j_1)\rvu_{k}(j_2) \sum_{\ell \in \sL}\sum_{\substack{e\in \gE_{\ell}[\gY_2\cup \gZ]\\ \{i,j_1, j_2\}\subset e} } \frac{a_{\ell}}{\binom{N - 1}{\ell-1}}\,.\label{eqn:expectation_Y2cupZ}
    \end{align}
Note that $|\gY_2 \cup \gZ| \leq N$, then the number of possible hyperedges $e$, while $e\in \gE_{\ell}[\gY_2\cup \gZ]$ and $e \supset \{i,j_1, j_2\}$, is at most $\binom{N}{\ell - 3}$. Thus \eqref{eqn:expectation_Y2cupZ} is upper bounded by
\begin{subequations}
    \begin{align*}
        &\, \sum_{j_1 \neq j_2} \rvu_{k}(j_1)\rvu_{k}(j_2) \sum_{\ell \in \sL} \binom{N}{\ell - 3} \frac{a_{\ell}}{\binom{N - 1}{\ell-1}} \\
        \leq &\,\sum_{j_1 \neq j_2} \rvu_{k}(j_1)\rvu_{k}(j_2) \sum_{\ell \in \sL} \frac{(\ell - 1)(\ell - 2)}{(N-\ell)^2}a_{\ell}
        \leq \frac{d \LM }{N^{2}} \sum_{j_1 \neq j_2} \rvu_{k}(j_1)\rvu_{k}(j_2) \,\\
        \leq &\, \frac{d \LM }{2N^{2}} \sum_{j_1 \neq j_2}\Big( [\rvu_{k}(j_1)]^2 + [\rvu_{k}(j_2)]^2 \Big)
        \leq \frac{d \LM (N-1) }{2N^{2}}\Bigg( \sum_{j_1 = 1}^{N} [\rvu_{k}(j_1)]^2 + \sum_{j_2 = 1}^{N}[\rvu_{k}(j_2)]^2 \Bigg)\\
        \leq & \frac{d \LM }{N}\,,
    \end{align*}
\end{subequations}
where $\|\rvu_{k}\|_2 = 1$, $d = \sum_{\ell \in \sL}(\ell - 1)a_{\ell}$. With the upper bounds for part $(a)$ and $(b)$ in \eqref{eqn:contribution_part_a_b}, the conditional expectation of $\|\rmP_{\rmU}\rve_i\|_2^2$ is then bounded by

\begin{align*}
     \E_{\{\tA^{(\ell)}\}_{\ell \in \sL}} \left[ \|\rmP_{\rmU}\rve_i\|_2^2 \Big\vert \{\rvu_{k}\}_{k=1}^{K} \right] =&\, \sum_{k=1}^{K} \E_{\{\tA^{(\ell)}\}_{\ell \in \sL}} \left[ \< \rvu_{k}, \rve_i\>^2 \Big\vert \{\rvu_{k}\}_{k=1}^{K} \right] \leq \frac{Kd}{N}(\LM + 2)\,,
\end{align*}
Let $\rX_{i}$ be the Bernoulli random variable defined by
\begin{equation*}
    \rX_{i} = \indi{ \|\rmP_{\rmU}\rve_{i} \|_2 > 2 \sqrt{ Kd(\LM + 2)/N }  } \,, \quad i\in \{i_1, \ldots, i_s\}\,.
\end{equation*}
By Markov's inequality,
\begin{align*}
    \E \rX_{i} = \P\left( \|\rmP_{\rmU}\rve_i \|_2 > 2 \sqrt{ Kd(\LM + 2)/N } \right) \leq \frac{ \E_{\{\tA^{(\ell)}\}_{\ell \in \sL}} \left[ \|\rmP_{\rmU}\rve_i\|_2^2 \Big\vert \{\rvu_{k}\}_{k=1}^{K} \right] }{4Kd(\LM + 2)/N} \leq \frac{1}{4}\,.
\end{align*}
Let $\delta \coloneqq s[2 \sum_{j=1}^{s}\E \rX_{i_j}]^{-1} - 1$ where $s = 2K\log^2(N)$. By Hoeffding \Cref{lem:Hoeffding},
\begin{align*}
    &\, \P \Bigg( \sum_{j=1}^{s} \rX_{i_j} \geq \frac{s}{2}\Bigg) = \P \Bigg( \sum_{j=1}^{s}(\rX_{i_j} - \E \rX_{i_j}) \geq \delta \sum_{j=1}^{s}\E \rX_{i_j}\Bigg) \\
    \leq &\, \exp \Bigg( - \frac{2\delta^2 \big(\sum_{j=1}^{s}\E \rX_{i_j} \big)^2}{s} \Bigg)  = O \Bigg( \frac{1}{N^{K\log(N)}} \Bigg)\,.
\end{align*}
Therefore, with probability $1 - O(N^{-K\log(N)})$, at least $s/2$ of the vectors $\rve_{i_1}, \dots, \rve_{i_s}$ satisfy $$\|\rmP_{\rmU}\rve_i \|_2 \leq 2 \sqrt{Kd(\LM + 2)/N}.$$ Meanwhile, for any $c\in(0, 2)$, there exists some large enough constant $\const_{2} \geq 2^{\LM + 1}\sqrt{(\LM + 2)/K}/c$ such that if $\sum_{\ell \in \sL}(\ell - 1)(a_{\ell} -b_{\ell}) > \const_{2} K^{\LM-1}\sqrt{d}$, then
\begin{align*}
	&\|\overline{\rvdelta}_i\|_2 =\frac{\sqrt{N}(\bar{\alpha} - \bar{\beta})}{2} = \frac{\sqrt{N}}{2}\sum_{\ell \in \sL} \binom{ \frac{3N}{4K} - 2}{\ell - 2} \frac{a_{\ell} - b_{\ell}}{\binom{N - 1}{\ell-1}} \\
	=\,& \frac{(1 + o(1))}{2\sqrt{N}} \sum_{\ell \in \sL} \Big( \frac{3}{4K} \Big)^{\ell - 2} (\ell - 1)(a_{\ell} - b_{\ell}) \geq \frac{K}{(2K)^{\LM-1} \sqrt{N}} \sum_{\ell \in \sL} (\ell - 1)(a_{\ell} - b_{\ell}),\\
	> \,& \frac{ \const_{2} K^{\LM}\sqrt{d} }{(2K)^{\LM-1}  \sqrt{N} }\geq \frac{ 2^{\LM + 1 }\sqrt{(\LM + 2)} K^{\LM}\sqrt{d} }{c(2K)^{\LM-1}  \sqrt{kn}} \\
 > &\frac{2}{c}~ 2\sqrt{ \frac{Kd(\LM + 2)}{N}} > \frac{2}{c} \|\rmP_{\rmU}\rve_i \|_2 .
\end{align*}
 \end{proof}

\begin{proof}[Proof of \Cref{lem:angle_accuracy_multiple}]
    Split the vertex set $\gV$ into $\widehat{\gV}_{+} = \{i|\ervv_{i} >0\}$ and $\widehat{\gV}_{-} = \{i|\ervv_{i} \leq 0\}$. Without loss of generality, assume that the first $\frac{N}{K}$ entries of $\overline{\rvv}$ are positive. We can write $\rvv$ in terms of its orthogonal projection onto $\overline{\rvv}$. Choose $\rvepsilon$ such that $\rvepsilon \perp \overline{\rvv}$ and $\|\rvepsilon\|_2 < c$ with $c_{1} \geq \sqrt{1 - c^{2}}$, then we can write
   \begin{align}
    \rvv = c_{1}\overline{\rvv} + \rvepsilon = \Big[\ervepsilon_{1} + \frac{c_{1}}{\sqrt{N}}, \cdots, \ervepsilon_{\frac{N}{K}} + \frac{c_{1}}{\sqrt{N}},\,\, \ervepsilon_{\frac{N}{K}+1} - \frac{c_{1}}{\sqrt{N}} , \cdots,  \ervepsilon_{N} -\frac{c_{1}}{\sqrt{N}} \Big]^{\sT}\,,
    \end{align}
    The number of entries in $\rvepsilon$ smaller than $-\sqrt{1 - c^{2}}/\sqrt{N}$ is at most $c^{2}(1 - c^{2})^{-1}N$. Note that $c_{1} \geq \sqrt{1 - c^{2}}$, thus at least $\frac{N}{K} - \frac{c^{2}}{1 - c^{2}}N$ indices $i$ with $\overline{\ervv}_{i} = \frac{1}{\sqrt{N}}$ will have $\ervv_i > 0$. Then the ratio we are seeking is at least 
    \begin{align*}
        \frac{ \frac{N}{K} - \frac{c^{2}}{1 - c^{2}}N}{ \frac{N}{K}} = 1 - \frac{Kc^2}{1 - c^2} > 1 - \frac{4K}{3}c^2. 
    \end{align*}
\end{proof}

\begin{proof}[Proof of \Cref{lem:discard_half_blue}]
We start with the following simple claim: for any $\ell \geq 2$ and any $\nu \in [1/2, 1)$, 
\begin{align} \label{eqn:asympt_binom}
\nu^{\ell} + (1-\nu)^{\ell} < \left ( \frac{1+\nu}{2} \right)^{\ell}~.
\end{align}
Indeed, one quick way to see this is by induction on $\ell$; we will induct from $\ell$ to $\ell + 2$. Assume the inequality is true for $\ell$; then  
\begin{eqnarray*}
\nu^{\ell + 2} +(1-\nu)^{\ell + 2} & = & \nu^2 \nu^{\ell} + (1-\nu)^2 (1-\nu)^{\ell} \\
& = &  \nu^2 \nu^{\ell} + (1-2\nu+\nu^2)(1-\nu)^{\ell} ~\leq ~\nu^2 \nu^{\ell}  
+ \nu^2 (1-\nu)^{\ell} \\
& = & \nu^2 (\nu^{\ell} + (1-\nu)^{\ell} ) ~<~ (\nu^2 + (1-\nu)^2) (\nu^{\ell} + (1-\nu)^{\ell} ) \\
& < & \left (\frac{1+\nu}{2} \right)^2 \left ( \frac{1+\nu}{2} \right)^{\ell} = \left ( \frac{1+\nu}{2} \right)^{\ell + 2}~,
\end{eqnarray*}
where we have used the induction hypothesis together with $1-2\nu \leq 0$ and $(1-\nu)^2>0$. After easily checking that the inequality works for $\ell =2,3$, the induction is complete. 

We shall now check that the quantities defined in   \Cref{lem:discard_half_blue} obey the relationship $\mu_2 \geq \mu_1$ and $\mu_2 - \mu_1 = \Omega(N)$, for $N$ large enough.  First, note that the only thing we need to check is that, for sufficiently large $N$, 
\[
\binom{\frac{\nu N}{2K}}{\ell} + \binom{\frac{(1-\nu)N}{2K}}{\ell} \leq \binom{\frac{(1+\nu)N}{4K}}{\ell} + (K - 1) \binom{\frac{(1-\nu)N}{4K(K - 1)}}{\ell}~;
\]
in fact, we will show the stronger statement that for any $\ell \geq 2$ and $N$ large enough, 
\begin{align} \label{eqn:strict_binom}
\binom{\frac{\nu N}{2K}}{\ell} + \binom{\frac{(1-\nu)N}{2K}}{\ell} <  \binom{\frac{(1+\nu)N}{4K}}{\ell} ~,
\end{align}
and this will suffice to see that the second part of the assertion, $\mu_2-\mu_1 = \Omega(N)$, is also true. 
Asymptotically, $\binom{\frac{\nu N}{2K}}{\ell} \sim \frac{\nu^{\ell}}{\ell !} \left ( \frac{N}{2K} \right)^{\ell} $, $\binom{\frac{(1-\nu)N}{2K}}{\ell} \sim \frac{(1-\nu)^{\ell}}{\ell !} \left ( \frac{N}{2K} \right)^{\ell}$, and $\binom{\frac{(1+\nu)N}{4K}}{\ell} \sim \frac{\left ( \frac{1+\nu}{2} \right)^{\ell}}{\ell !} \left ( \frac{N}{2K} \right)^{\ell}$. Then the inequality \eqref{eqn:strict_binom} follows from the claim \eqref{eqn:asympt_binom}. 

\vspace{.2cm}

Let \{$\gV_{1}, \dots, \gV_{K}$\} be the true partition of $\gV$. Recall that hyperedges in $\gH = \cup_{\ell \in \sL}\gH_{\ell}$ are colored red and blue with equal probability in \Cref{alg:multiple_partition_partial}.  Let $\gE_{\ell}(\gX)$ denote the set of blue $\ell$-uniform hyperedges with all vertices located in the vertex set $\gX$. 
Assume $|\gX \cap \gV_k| = \eta_k |\gX|$ with $\sum_{k=1}^{K}\eta_{k} = 1$. For each $\ell \in\sL$, the presence of hyperedge $e\in \gE_{\ell}(\gX)$ can be represented by independent Bernoulli random variables
\begin{equation*}
    \rT_{e}^{(a_{\ell})}\sim \mathrm{Bernoulli}\left( \frac{a_{\ell}}{ 2 \binom{N - 1}{\ell-1} }\right), \quad \rT_{e}^{(b_{\ell})} \sim \mathrm{Bernoulli}\left( \frac{b_{\ell}}{ 2\binom{N - 1}{\ell-1} }\right)\,,
\end{equation*}
depending on whether $e$ is a hyperedge with all vertices in the same community. Denote by \[\gE_{\ell}(\gX, a_{\ell}) \coloneqq \cup_{i=1}^{K}\gE_{\ell}(\gV_i\cap \gX)\] the union of all $\ell$-uniform sets of hyperedges with all vertices in the same $\gV_i\cap \gX$ for some $i\in [K]$, and by
\begin{align*}
    \gE_{\ell}(\gX, b_{\ell}) \coloneqq\, \gE_{\ell}(\gX) \setminus \gE_{\ell}(\gX, a_{\ell}) = \gE_{\ell}(\gX) \setminus \Big( \cup_i^{K} \gE_{\ell}(\gV_i\cap \gX) \Big)
\end{align*}
the set of $\ell$-uniform hyperedges with vertices across different blocks $\gV_i\cap \gX$. Then the cardinality $|\gE_{\ell}(\gX)|$ can be written as the
\begin{align*}
   |\gE_{\ell}(\gX)| = \sum_{e\in \gE_{\ell}(\gX, a_{\ell})} \rT_{e}^{(a_{\ell})} \, + \sum_{e\in \gE_{\ell}(\gX, b_{\ell})} \rT_{e}^{(b_{\ell})}\,,
\end{align*}
and by summing over $\ell$, the weighted cardinality $|\gE(\gX)|$ is written as
\begin{align*}
    |\gE(\gX)| \coloneqq \sum_{\ell \in \sL}\ell(\ell - 1)|\gE_{\ell}(\gX)| = \sum_{\ell \in \sL}\ell(\ell - 1)\left\{ \sum_{e\in \gE_{\ell}(\gX, a_{\ell})} \rT_{e}^{(a_{\ell})} \, + \sum_{e\in \gE_{\ell}(\gX, b_{\ell})} \rT_{e}^{(b_{\ell})} \right\}\,,
\end{align*}
with its expectation
\begin{align} \label{eq:true_expect}
    \E |\gE(\gX)| = \sum_{\ell \in \sL}\ell(\ell - 1)\left\{ \sum_{i=1}^{K} \binom{ \eta_{i} \frac{N}{2K}}{\ell} \frac{a_{\ell} - b_{\ell} }{ 2\binom{N - 1}{\ell-1} } + \binom{\frac{N}{2K}}{\ell} \frac{b_{\ell}}{ 2 \binom{N - 1}{\ell-1} } \right\}\,,
\end{align}
since 
\begin{align*}
    |\gE_{\ell}(\gX, a_{\ell})| = \sum_{i=1}^{K} |\gE_{\ell}(\gV_i\cap \gX)|= \sum_{i=1}^{K} \binom{\eta_{i} \frac{N}{2K}}{\ell}\,, \quad |\gE_{\ell}(\gX, b_{\ell})|= \binom{ \frac{N}{2K}}{\ell} - \sum_{i=1}^{K} \binom{\eta_{i} \frac{N}{2K}}{\ell}\,,
\end{align*}

Next, we prove the two Statements in \Cref{lem:discard_half_blue} separately. First, assume that  $|\gX \cap \gV_i| \leq \nu |\gX|$ ( i.e., $\eta_i \leq \nu$) for each $i\in [K]$. Then
        \begin{align*}
            \E |\gE(\gX)| \leq &\, \frac{1}{2} \sum_{\ell \in \sL}\ell(\ell - 1) \left\{ \left[ \binom{\frac{\nu N}{2K}}{\ell} + \binom{\frac{(1 - \nu)N}{2K}}{\ell} \right] \frac{a_{\ell} - b_{\ell} }{ \binom{N - 1}{\ell-1} } + \binom{\frac{N}{2K}}{\ell} \frac{b_{\ell}}{ \binom{N - 1}{\ell-1} } \right\} =:\mu_1\,.
        \end{align*}
To justify the above inequality, note that since $\sum\limits_{i=1}^{K} \eta_i = 1$, the sum 
$\sum_{i=1}^{K} \binom{\eta_i \frac{N}{2K}}{\ell}
$ is maximized when all but $2$ of the $\eta_i$ are $0$, and since all $\eta_i \leq \nu$, this means that 
\[
\sum_{i=1}^{K} \binom{\eta_i \frac{N}{2K}}{\ell} \leq \binom{\frac{\nu N}{2K}}{\ell} + \binom{\frac{(1-\nu)N}{2K}}{\ell}\,.
\] 
Note that $\ell(\ell - 1)(\rT_{e}^{(a_{\ell})} - \E \rT_{e}^{(a_{\ell})})$ and $\ell(\ell - 1)(\rT_{e}^{(b_{\ell})} - \E \rT_{e}^{(b_{\ell})})$ are independent mean-zero random variables bounded by $\ell(\ell - 1)$ for all $\ell \in \sL$, and $\Var(|\gE(\gX)|) \leq \LM^2(\ell - 1)^2\E |\gE(\gX)| = \Omega(N)$. Recall that $\mu_{\mathrm{T}}\coloneqq (\mu_1 + \mu_2)/2$. Define $t = \mu_{\mathrm{T}} - \E|\gE(\gX)|$, then $ 0 < (\mu_2 - \mu_1)/2 \leq t \leq \mu_{\mathrm{T}}$, hence $t = \Omega(N)$. By Bernstein's \Cref{lem:Bernstein}, we have
\begin{eqnarray*}
       \P \Big( |\gE(\gX)| \geq \mu_{\mathrm{T}} \Big) = \P \Big( |\gE(\gX)| - \E|\gE(\gX)|
       \geq  t \Big) 
       \leq & \exp\left(-\frac{t^2/2}{\Var(|\gE(\gX)|) + \ell(\ell - 1)t/3} \right) = O(e^{-cN})\,,
\end{eqnarray*}
where $c>0$ is some constant. On the other hand, if $|\gX \cap \gV_i| \geq \frac{1+\nu}{2}|\gX|$ for some $i\in [K]$,  then
    \begin{align*}
       &\,  \E|\gE(\gX)|\geq \frac{1}{2} \sum_{\ell \in \sL}\ell(\ell - 1) \left\{ \left[ \binom{\frac{(1 + \nu)N}{4K} }{\ell} + (K - 1)\binom{ \frac{(1 - \nu)N}{4K(K - 1)}}{\ell} \right] \frac{a_{\ell} - b_{\ell} }{ \binom{N - 1}{\ell-1} } + \binom{\frac{N}{2K}}{\ell} \frac{b_{\ell}}{ \binom{N - 1}{\ell-1} } \right\} =:\mu_2\,.
    \end{align*}
    The above can be justified by noting that at least one $|X\cap \gV_i| \geq \frac{1+\nu}{2} |\gX|$, and that the rest of the vertices will yield a minimal binomial sum when they are evenly split between the remaining $\gV_j$. Similarly, define $t = \mu_{\mathrm{T}} - \E|\gE(\gX)|$, then $0 < (\mu_2 - \mu_1)/2 \leq -t = \Omega(N)$, and Bernstein's \Cref{lem:Bernstein} gives
    \begin{align*}
       \P \Big( |\gE(\gX)| \leq \mu_{\mathrm{T}} \Big) = &\, \P \Big( |\gE(\gX)| - \E|\gE(\gX)| \leq -t \Big)\\
       \leq &\, \exp\left(-\frac{t^2/2}{\Var(|\gE(\gX)|) + \ell(\ell - 1)(-t)/3} \right) = O(e^{-c^{\prime} N})\,,
\end{align*}
where $c^{\prime}>0$ is some other constant.
\end{proof}

\begin{proof}[Proof of \Cref{lem:sample_k_distinct}]
    If vertex $i$ is uniformly chosen from $\gY_2$, the probability that $i \notin \gV_{k}$ for some $k \in [K]$ is 
\begin{align*}
    \P(i\notin \gV_{k} | i\in \gY_2) = \frac{\P(i\notin \gV_{k}, i\in \gY_2) }{\P(i \in \gY_2)} = 1 - \frac{|\gV_{k} \cap \gY_2|}{|\gY_2|} = 1 - \frac{\frac{N}{K} - N_{k} - \widetilde{N}_{k}}{N - \sum_{t=1}^{K}(N_t + \widetilde{N}_t) }\,,\quad k\in [K]\,,
\end{align*}
where $N_{t}$ and $\widetilde{N}_{t}$, defined in \eqref{eqn:dimension_ZcapVi} and \eqref{eqn:dimension_Y1capVi}, denote the cardinality of $\gZ \cap \gV_t$ and $\gY_1\cap \gV_t$ respectively. As proved in \Cref{lem:singular_value_approximation}, with probability at least $1 - 2\exp( -K \log^2(N))$, we have
\begin{align*}
    |N_{t} - N/(2K)|\leq \sqrt{N}\log(N), \quad |\widetilde{N}_{t} - N/(4K)|\leq \sqrt{N}\log(N).
\end{align*}
Then
$
    \P(i\notin \gV_{k} | i\in \gY_2) = 1-\frac{1}{K} \Big(1 + o(1) \Big)\,.
$
After $K\log^2(N)$ samples from $\gY_2$, the probability that there exists at least one node which belongs to $\gV_{k}$ is at least
\[
    1 - \bigg(1 - \frac{1+o(1)}{K} \bigg)^{K\log^2(N)}=1-N^{-(1+o(1))K\log(\frac{K}{K - 1})\log N}.
\]
The proof is completed by a union bound over $k\in [K]$.
\end{proof}

\subsection{Proofs of Lemmas in \Cref{subsec:local_correction}}
\begin{proof}[Proof of \Cref{lem:mislabel_probability_correction}]
    We calculate $\P(\widehat{\rS}_{11}^{(0)}(u) \leq \mu_{\rm{C}})$ first. Define $t_{\rm{1C}} \coloneqq \mu_{\rm{C}} - \E \widehat{\rS}_{11}^{(0)}(u)$, then by Bernstein \Cref{lem:Bernstein},
\begin{subequations}
    \begin{align*}
    &\, \P \left( \widehat{\rS}_{11}^{(0)}(u) \leq \mu_{\rm{C}}\right) = \P \left( \widehat{\rS}_{11}^{(0)}(u) - \E \widehat{\rS}_{11}^{(0)}(u) \leq t_{\rm{1C}}\right)\\
    \leq &\, \exp\left( - \frac{t_{\rm{1C}}^2/2}{\Var[\widehat{\rS}_{11}^{(0)}(u)] +(\LM - 1)\cdot  t_{\rm{1C}}/3} \right) \leq \exp\left( -\frac{3t_{\rm{1C}}^2/(\LM - 1)}{6(\LM - 1)\cdot \E \widehat{\rS}_{11}^{(0)}(u) +  2 t_{\rm{1C}}} \right)\\
    \leq&\, \exp\left( -\frac{[(\nu)^{\LM-1} - (1 - \nu)^{\LM-1}]^2 }{(\LM - 1)^2\cdot 2^{2\LM + 3}} \cdot \frac{ \left[\sum_{\ell \in \sL} (\ell - 1)\left(\frac{a_{\ell} - b_{\ell}}{K^{\ell - 1}} \right) \right]^2 }{\sum_{\ell \in \sL} (\ell - 1)\left(\frac{a_{\ell} - b_{\ell}}{K^{\ell - 1}} + b_{\ell} \right)}\right)\,,
\end{align*}
\end{subequations}
where $\sL$ is obtained from \Cref{alg:parameter_preprocessing} with $\LM$ denoting the maximum value in $\sL$, and the last two inequalities hold since $\Var[\widehat{\rS}_{11}^{(0)}(u)] \leq (\LM - 1)^2\E \widehat{\rS}_{11}^{(0)}(u)$, and for sufficiently large $N$,
    \begin{align*}
     t_{\rm{1C}} \coloneqq&\, \mu_{\rm{C}} - \E \widehat{\rS}_{11}^{(0)}(u) = -\frac{1}{2} \sum_{\ell \in \sL} (\ell - 1)\cdot\left[ \binom{\frac{\nu N}{2K}}{\ell - 1} - \binom{\frac{(1 - \nu)N}{2K}}{\ell - 1} \right]\frac{a_{\ell} - b_{\ell}}{2\binom{N - 1}{\ell-1}}\\
     \leq &\, - \frac{1}{2}\sum_{\ell \in \sL} \frac{(\nu)^{\ell - 1} - (1 - \nu )^{\ell - 1}}{2^{\ell}}\cdot(\ell - 1) \frac{a_{\ell} - b_{\ell}}{K^{\ell - 1}}(1 + o(1))\\
    \leq &\, -\frac{(\nu)^{\LM - 1} - (1 - \nu)^{\LM - 1} }{2^{\LM + 2}} \sum_{\ell \in \sL}(\ell - 1) \cdot\frac{a_{\ell} - b_{\ell}}{K^{\ell - 1}} \,,
    \end{align*}
as well as the following bound on the expectation
    \begin{align*}
    &\, 6(\LM - 1)\E \widehat{\rS}_{11}^{(0)}(u) +  2t_{\rm{1C}} = 2\mu_{\rm{C}} + (6\LM - 8)\E \widehat{\rS}_{11}^{(0)}(u)\\
         = &\, \sum_{\ell \in \sL} (\ell - 1)\left\{  \left[ (6\LM - 7)\binom{\frac{\nu N}{2K}}{\ell - 1} + \binom{\frac{(1 - \nu)N}{2K}}{\ell - 1} \right] \frac{a_{\ell} - b_{\ell}}{2\binom{N - 1}{\ell-1}} +  6(\LM - 1)\binom{\frac{N}{2K}}{\ell - 1}\frac{b_{\ell}}{2\binom{N - 1}{\ell-1}}\right\}\\
         = &\, \sum_{\ell \in \sL} (\ell - 1)\left[ \frac{(6\LM - 7)\cdot (\nu)^{\ell - 1} + (1 - \nu )^{\ell - 1} }{2^{\ell}}\cdot \frac{a_{\ell} - b_{\ell}}{K^{\ell - 1}} + \frac{(6\LM - 6)b_{\ell}}{2^{\ell}K^{\ell - 1}}
         \right](1+o(1))\\
         \leq &\, \sum_{\ell \in \sL} \frac{(6\LM - 7)\cdot(\nu)^{\ell - 1} + (1 - \nu )^{\ell - 1} }{2^{\ell}}\cdot (\ell - 1)\left(\frac{a_{\ell} - b_{\ell}}{K^{\ell - 1}} + b_{\ell}\right)(1+o(1))\\
         \leq &\, \frac{3(\LM - 1)}{2}\sum_{\ell \in \sL} (\ell - 1)\left(\frac{a_{\ell} - b_{\ell}}{K^{\ell - 1}} + b_{\ell}\right)\,.
\end{align*}
Similarly, for $\P(\widehat{\rS}_{1j}^{(0)}(u) \geq \mu_{\rm{C}})$, define $t_{\rm{jC}} \coloneqq \mu_{\rm{C}} - \E \widehat{\rS}_{1j}^{(0)}(u)$ for $j\neq 1$, by Bernstein's \Cref{lem:Bernstein}, 
\begin{subequations}
\begin{align*}
    &\, \P \left( \widehat{\rS}_{1j}^{(0)}(u) \geq \mu_{\rm{C}}\right) = \P \left( \widehat{\rS}_{1j}^{(0)}(u) - \E \widehat{\rS}_{1j}^{(0)}(u) \geq t_{\rm{jC}}\right)\\
    \leq &\, \exp\left( - \frac{t_{\rm{jC}}^2/2}{\Var[\widehat{\rS}_{1j}^{(0)}(u)] + (\LM - 1)\cdot t_{\rm{jC}}/3} \right) \leq \exp\left( -\frac{3t_{\rm{jC}}^2/(\LM - 1)}{6(\LM - 1)\cdot\E \widehat{\rS}_{1j}^{(0)}(u) +  2 t_{\rm{jC}}} \right)\\
    \leq&\, \exp\left( -\frac{[(\nu)^{\LM-1} - (1 - \nu )^{\LM-1}]^2 }{(\LM - 1)^2\cdot 2^{2\LM + 3}} \cdot \frac{ \left[\sum_{\ell \in \sL} (\ell - 1)\left(\frac{a_{\ell} - b_{\ell}}{K^{\ell - 1}} \right) \right]^2 }{\sum_{\ell \in \sL} (\ell - 1)\left(\frac{a_{\ell} - b_{\ell}}{K^{\ell - 1}} + b_{\ell} \right)} \right)\,.
\end{align*}
\end{subequations}
The last two inequalities holds since $\Var[\widehat{\rS}_{1j}^{(0)}(u)] \leq (\LM - 1)^2\E \widehat{\rS}_{1j}^{(0)}(u)$, and for sufficiently large $N$, 
\begin{subequations}
    \begin{align*}
     t_{\rm{jC}} \coloneqq&\, \mu_{\rm{C}} - \E \widehat{\rS}_{1j}^{(0)}(u) = \frac{1}{2} \sum_{\ell \in \sL} (\ell - 1)\cdot \left[ \binom{\frac{\nu N}{2K}}{\ell - 1} - \binom{\frac{(1 - \nu)N}{2K}}{\ell - 1} \right]\frac{a_{\ell} - b_{\ell}}{2\binom{N - 1}{\ell-1}}\\
    \geq &\, \frac{(\nu)^{\LM - 1} - (1 - \nu )^{\LM - 1} }{2^{\LM + 2}} \sum_{\ell \in \sL}(\ell - 1) \cdot\frac{a_{\ell} - b_{\ell}}{K^{\ell - 1}} \,\,,
    \end{align*}
\end{subequations}
as well as the following bound on the expectation
\begin{subequations}
    \begin{align*}
    &\,6\E \widehat{\rS}_{1j}^{(0)}(u) +  2t_{\rm{jC}} = 2\mu_{\rm{C}} + (6\LM - 8)\E \widehat{\rS}_{1j}^{(0)}(u)\\
         = &\, \sum_{\ell \in \sL} (\ell - 1)\left\{ \left[ \binom{\frac{\nu N}{2K}}{\ell - 1} + (6\LM - 7)\binom{\frac{(1 - \nu)N}{2K}}{\ell - 1} \right] \frac{a_{\ell} - b_{\ell}}{2\binom{N - 1}{\ell-1}} +  6(\LM -1)\binom{\frac{N}{2K}}{\ell - 1}\frac{b_{\ell}}{2\binom{N - 1}{\ell-1}}\right\}\\
        = &\, \sum_{\ell \in \sL} (\ell - 1)\cdot\left( \frac{(\nu)^{\ell - 1} + (6\LM - 7)\cdot (1 - \nu )^{\ell - 1}}{2^{\ell}}\cdot \frac{a_{\ell} - b_{\ell}}{K^{\ell - 1}} + \frac{(6\LM - 6)\cdot b_{\ell}}{2^{\ell}K^{\ell - 1}}\right)(1+o(1))\\
         \leq &\, \sum_{\ell \in \sL} \frac{ (\nu)^{\ell - 1} + (6\LM - 7)\cdot(1 - \nu )^{\ell - 1}}{2^{\ell}}\cdot (\ell - 1)\left(\frac{a_{\ell} - b_{\ell}}{K^{\ell - 1}} + b_{\ell}\right)(1+o(1))\\
         \leq &\, \frac{3(\LM - 1)}{2}\sum_{\ell \in \sL} (\ell - 1)\left(\frac{a_{\ell} - b_{\ell}}{K^{\ell - 1}} + b_{\ell}\right).
\end{align*}
\end{subequations}
\end{proof}

\section{Algorithm correctness for the binary case}\label{sec:analysis_binary}

We will show the correctness of \Cref{alg:binary_partition_partial} and prove \Cref{thm:partial_binary} in this section. Without loss of generality, we assume $N$ is even to guarantee the existence of a binary partition of size $N/2$. The method to deal with the odd $N$ case was discussed in \Cref{lem:general_eigen_value_approximation}. The analysis will mainly follow from the analysis in \Cref{sec:analysis_multiple}. We only detail the differences. 

First, with $\alpha$ and $\beta$ defined in \eqref{eqn:alpha_beta}, \eqref{eqn:expected_adjacency_multiple} becomes 
\begin{align}
  \overline{\rmA} \coloneqq \E \rmA = 
  \begin{bmatrix}
     \alpha \rmJ_{\frac{N}{2}}  &  \beta \rmJ_{\frac{N}{2}}\\
    \beta  \rmJ_{\frac{N}{2}} & \alpha \rmJ_{\frac{N}{2}}
  \end{bmatrix} - \alpha \rmI_{N}\,, \label{eqn:expected_adjacency_binary}
\end{align}

Second, let the index set be \(\sI = \{i\in \gV: \mathrm{row}(i) \leq 20\LM d\}\), as shown in \eqref{eqn:restricted_adjacency_matrix_partial}.  Let $\rvu_i$ (resp. $\overline{\rvu}_i$) denote the eigenvector associated to $\lambda_i(\rmA_{\sI})$ (resp. $\lambda_i(\overline{\rmA})$) for $i =1, 2$. Define two linear subspaces $\rmU \coloneqq \mathrm{Span}\{\rvu_{1}, \rvu_{2}\}$ and $\overline{\rmU} \coloneqq \mathrm{Span}\{\overline{\rvu}_{1}, \overline{\rvu}_{2}\}$, then the angle between $\rmU$ and $\overline{\rmU}$ is defined as
    $
      \sin \angle(\rmU, \overline{\rmU}) \coloneqq \|\rmP_{\rmU} - \rmP_{\overline{\rmU}}\|
    $,
where $\rmP_{\rmU}$ and $\rmP_{\overline{\rmU}}$ are the orthogonal projections onto $\rmU$ and $\overline{\rmU}$, respectively.

\subsection{Proof of \Cref{lem:spectral_clustering_accuracy_binary}}

\subsubsection{Bound the angle between $\rmU$ and $\overline{\rmU}$}\label{subsubsec:bound_subspace_angle_binary} 

The strategy to bound the angle is similar to \Cref{subsubsec:bound_subspace_angle_multiple}, except that we apply Davis-Kahan Theorem (\Cref{lem:Davis_Kahan_sin}) here. 

Define $\rmE \coloneqq \rmA - \overline{\rmA}$ and its restriction on $\sI$, namely $\rmE_{\sI} \coloneqq (\rmA - \overline{\rmA})_{\sI} = \rmA_{\sI} - \overline{\rmA}_{\sI}$, as well as $\rvdelta\coloneqq \overline{\rmA}_{\sI} - \overline{\rmA}$. Then the deviation $\rmA_{\sI} - \overline{\rmA}$ is decomposed as
\begin{align}
    \rmA_{\sI} - \overline{\rmA} = (\rmA_{\sI} - \overline{\rmA}_{\sI}) + (\overline{\rmA}_{\sI} - \overline{\rmA}) = \rmE_{\sI} + \rvdelta\,.\notag
\end{align}

\Cref{thm:regularization_concentration_partial} indicates $\|\rmE_{\sI}\|\leq \const_{3}\sqrt{d}$ with probability at least $1 - N^{-2}$ when taking $\tau=20\LM, \theta =3$, where $\const_{3}$ is a constant depending only on $\LM$. Moreover, \Cref{lem:high_degree_vertices_k} shows that the number of vertices with high degrees is relatively small. Consequently, an argument similar to \Cref{cor:norm_of_high_degree_vertices_multi_block} leads to the conclusion $\|\rvdelta\| \leq \sqrt{d}$ with high probability. Together with upper bounds for $\|\rmE_{\sI}\|$ and $\|\rvdelta\|$, \Cref{lem:subspace_angle_binary} shows that the angle between $\rmU$ and $\overline{\rmU}$ is relatively small with high probability.

\begin{lemma}\label{lem:subspace_angle_binary}
    For any $c\in (0,1)$, there exists a constant $\const_{2}$ depending on $\LM$ and $c$ such that if
  \begin{align}
     \sum_{\ell \in \sL}(\ell - 1)(a_{\ell}-b_{\ell})\geq \const_{2} \cdot 2^{\LM+2}\sqrt{d},\notag
  \end{align}
   then $\sin \angle (\rmU, \overline{\rmU}) \leq c$ with probability $1 - N^{-2}$. 
\end{lemma}
\begin{proof}[Proof of \Cref{lem:subspace_angle_binary}]
    First, with probability $1 - N^{-2}$, we have
\begin{align}
     \|\rmA_{\sI} - \overline{\rmA}\| \leq  \|\rmE_{\sI}\| + \|\rvdelta \|\leq (\const_{3}+1)\sqrt{d}. \notag 
\end{align}
    According to the definitions in \eqref{eqn:alpha_beta}, $\alpha \geq \beta$ and $\alpha  = O(1/N)$, $\beta = O(1/N)$. Meanwhile, \Cref{lem:eigenvalues_HSBM} shows that $|\lambda_2(\overline{\rmA})| = [- \alpha + (\alpha - \beta)N/2]$ and $|\lambda_{3}(\overline{\rmA})| = \alpha$. Then
    \begin{subequations}
        \begin{align*}
        &\,|\lambda_2(\overline{\rmA})| - |\lambda_{3}(\overline{\rmA})| = \frac{N}{2}(\alpha - \beta) - 2\alpha \geq \frac{3}{4}\cdot \frac{N}{2}(\alpha - \beta) = \frac{3N}{8} \sum_{\ell \in \sL} \binom{\frac{N}{2} - 2}{\ell - 2}  \frac{(a_{\ell} - b_{\ell})}{\binom{N - 1}{\ell-1}}\\ \geq &\, \frac{1}{4}\sum_{\ell \in \sL} \frac{(\ell - 1)(a_{\ell} - b_{\ell})}{2^{\ell - 2}} \geq \frac{1}{2^{\LM}}\sum_{\ell \in \sL}(\ell - 1)(a_{\ell} - b_{\ell}) \geq 4\const_{2}\sqrt{d}\,.
        \end{align*}
    \end{subequations}
    Then for some large enough $\const_{2}$, the following condition for Davis-Kahan Theorem (\Cref{lem:Davis_Kahan_sin}) is satisfied
\begin{align}
      \|\rmA_{\sI} - \overline{\rmA} \|\leq (1-1/\sqrt{2}) \left(|\lambda_2(\overline{\rmA})| - |\lambda_{3}(\overline{\rmA})|\right). \notag 
\end{align}
Then for any $c\in(0, 1)$, we can choose $\const_{2} = (\const_{3} + 1)/c$ such that 
    \begin{align}
        \|\rmP_{\rmU} - \rmP_{\overline{\rmU}}\| \leq \frac{2\|\rmA_{\sI} - \overline{\rmA}\|}{ |\lambda_2(\overline{\rmA})| - |\lambda_{3}(\overline{\rmA})|} \leq \frac{2(\const_{3}+1)\sqrt{d}}{4\const_{2}\sqrt{d}} = \frac{c}{2} \leq c\,. \notag 
    \end{align}
\end{proof}

Now, we focus on the accuracy of \Cref{alg:binary_block_spectral_partition}, once the conditions in \Cref{lem:subspace_angle_binary} are satisfied.
\begin{lemma}[{\cite[Lemma 23]{Chin2015StochasticBM}}]\label{lem:vector_angle_binary}
If $\sin \angle(\overline{\rmU}, \rmU) \leq c \leq \frac{1}{4}$, there exists a unit vector $\rvv\in \rmU$ such that the angle between $\overline{\rvu}_{2}$ and $\rvv$ satisfies $\sin \angle(\overline{\rvu}_{2}, \rvv) \leq 2 \sqrt{c}$.
\end{lemma}
\begin{proof}[Proof of \Cref{lem:vector_angle_binary}]
    Recall that $\overline{\rmU} \coloneqq \mathrm{Span}\{\overline{\rvu}_{1}, \overline{\rvu}_{2}\}$, $\rmU \coloneqq \mathrm{Span}\{\rvu_{1}, \rvu_{2}\}$, where 
    \begin{align*}
        \overline{\rvu}_{1} &= [\frac{1}{\sqrt{N}}, \cdots, \frac{1}{\sqrt{N}}]^{\sT},\\
        \overline{\rvu}_{2} &= [\frac{1}{\sqrt{N}}, \cdots, \frac{1}{\sqrt{N}}, -\frac{1}{\sqrt{N}}, \cdots, -\frac{1}{\sqrt{N}}]^{\sT}\,.
    \end{align*}
    By assumption, the angle between $\overline{\rmU}$ and $\rmU$ is bounded by
    \begin{align*}
        \sin \angle(\overline{\rmU}, \rmU) \coloneqq \|\rmP_{\overline{\rmU}} - \rmP_{\rmU}\| \leq c \leq \frac{1}{4},
    \end{align*}
    Define $\rvv_{k} \coloneqq \rmP_{\rmU} \overline{\rvu}_{k}$ and $\overline{\rvx}_{k} =  \overline{\rvu}_{k} - \rvv_{k}$ for $k = 1, 2$. Note that $\rvv_{k} \in \rmU$ and $\overline{\rvu}_{k} \in \overline{\rmU}$, then
    \begin{align*}
        \|\overline{\rvx}_{k}\|_2 = \|\overline{\rvu}_{k} - \rvv_{k}\|_2 \leq \|\rmP_{\overline{\rmU}} - \rmP_{\rmU}\| \cdot \|\overline{\rvu}_{k}\|_{2} \leq c.
    \end{align*}
    Meanwhile, $\rvv_{k} =  \overline{\rvu}_{k} - \overline{\rvx}_{k}$, then it implies $\|\rvv_{k}\|_{2}\geq 1 - c$. Since $\overline{\rvu}_{1}^{\sT} \overline{\rvu}_{2} = 0$, we have 
    \begin{align*}
        |\rvv_{1}^{\sT}\overline{\rvu}_{2}| = |\overline{\rvu}_{1}^{\sT}\overline{\rvu}_{2} - \overline{\rvx}_{1}^{\sT}\overline{\rvu}_{2}| \leq |\overline{\rvu}_{1}^{\sT}\overline{\rvu}_{2}| + |\overline{\rvx}_{1}^{\sT}\overline{\rvu}_{2}| \leq c\,.
    \end{align*}
    Together with the fact that  $|\rvv_{1}^{\sT}\overline{\rvx}_{2}| \leq c$, $|\rvv_{2}^{\sT}\overline{\rvx}_{1}| \leq c$, we have
    \begin{align*}
        |\rvv_{1}^{\sT}\rvv_{2}| =&\, |\overline{\rvu}_{1}^{\sT}\overline{\rvu}_{2} - \overline{\rvx}_{1}^{\sT}\rvv_{2} - \rvv_{1}^{\sT}\overline{\rvx}_{2} + \overline{\rvx}_{1}^{\sT}\overline{\rvx}_{2} |\\
        \leq &\, |\overline{\rvx}_{1}^{\sT}\rvv_{2}| + |\rvv_{1}^{\sT}\overline{\rvx}_{2}| + |\overline{\rvx}_{1}^{\sT}\overline{\rvx}_{2}| \leq 2c + c^{2}\,.
    \end{align*}
    Note that $|\overline{\rvx}_{k}^{\sT}\overline{\rvu}_{k}| \leq \|\overline{\rvx}_{k}\|_{2} \leq c$ for $k = 1, 2$, then
    \begin{align*}
        |\rvv_{k}^{\sT}\overline{\rvu}_{k}| =&\, |\overline{\rvu}_{k}^{\sT}\overline{\rvu}_{k} - \overline{\rvx}_{k}^{\sT}\overline{\rvu}_{k}| \geq |\overline{\rvu}_{k}^{\sT}\overline{\rvu}_{k}| - |\overline{\rvx}_{k}^{\sT}\overline{\rvu}_{k}| \geq 1 - c.
    \end{align*}
    Let $\rvv_{\perp} = \rvv_{2} - \rvv_{1}\frac{\rvv_{1}^{\sT}\rvv_{2}}{\|\rvv_{1}\|^{2}_{2}} \in \rmU$, using the estimates above, we then have 
    \begin{align*}
        |\rvv_{\perp}^{\sT}\overline{\rvu}_{2}| =&\, |\rvv_{2}^{\sT}\overline{\rvu}_{2} - \frac{\rvv_{1}^{\sT}\rvv_{2}}{\|\rvv_{1}\|^{2}_{2}}\rvv_{1}^{\sT}\overline{\rvu}_{2}|\\
        \geq &\, |\rvv_{2}^{\sT}\overline{\rvu}_{2}| - \bigg| \frac{\rvv_{1}^{\sT}\rvv_{2}}{\|\rvv_{1}\|^{2}_{2}}\rvv_{1}^{\sT}\overline{\rvu}_{2} \bigg|\\
        \geq &\, (1 - c) - \frac{(2c + c^{2})c}{(1 - c)^{2}} \geq 1 - 2c.
    \end{align*}
    Define the unit vector $\rvv \coloneqq \frac{\rvv_{\perp}}{\|\rvv_{\perp}\|_2}$, then we have
    \begin{align*}
        \sin \angle(\rvv, \overline{\rvu}_{2}) = \sqrt{1 - |\overline{\rvu}_{2}^{\sT}\rvv|^{2}} \leq \sqrt{1 - (1 - 2c)^{2}} = 2\sqrt{c}\,,
    \end{align*}
    which is the desired result.
\end{proof}

The vector $\rvv$ constructed by \Cref{alg:binary_block_spectral_partition} is the unit vector perpendicular to $\rmP_{\rmU}\ones_{N}$, where $\rmP_{\rmU}\ones_{N}$ is the projection of all-ones vector onto $\rmU$. \Cref{lem:subspace_angle_binary} and \Cref{lem:vector_angle_binary} together give the following corollary.
\begin{corollary}
    For any  $c \in (0, 1)$, there exists a unit vector $\rvv\in \rmU$ such that the angle between $\overline{\rvu}_{2}$ and $\rvv$ satisfies $\sin \angle( \overline{\rvu}_{2}, \rvv) \leq c < 1$ with probability  $1 - O(e^{-N})$.
\end{corollary}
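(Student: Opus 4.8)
The plan is to obtain the corollary as an immediate consequence of \Cref{lem:subspace_angle_2} and \Cref{lem:vector_angle_2block}, applied in series with the angle parameter of the former shrunk appropriately. Fix a target $c\in(0,1)$ and set $c':=c^2/4$; since $c<1$, we have $c'<1/4$. First I would check that the hypotheses of \Cref{thm:mainresult_2} furnish the signal condition needed to invoke \Cref{lem:subspace_angle_2} at level $c'$: the constant $C_2=C_2(\mathcal M_{\max},c')$ produced there depends only on $\mathcal M_{\max}$ and $c'$, hence only on $\mathcal M_{\max}$ and $c$, all fixed, so it suffices to take the constant $C_\nu$ in \Cref{thm:mainresult_2} at least as large as this $C_2$. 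Then $\sum_{m\in\mathcal M}(m-1)(a_m-b_m)\ge C_\nu\,2^{\mathcal M_{\max}+2}\sqrt d\ge C_2\,2^{\mathcal M_{\max}+2}\sqrt d$, which is exactly the assumption of \Cref{lem:subspace_angle_2}.

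With this in hand, \Cref{lem:subspace_angle_2} yields, on a high-probability event, $\sin\angle(\bU,\overline{\bU})\le c'\le 1/4$. On that event \Cref{lem:vector_angle_2block} applies and produces a unit vector $\bv\in\bU$ with $\sin\angle(\bar{\bu}_2,\bv)\le 2\sqrt{c'}=c$, which is the assertion of the corollary. I would also point out that this $\bv$ is concrete: since the top eigenvector $\bar{\bu}_1$ of $\overline{\bA}$ is parallel to $\mathbf 1_n$ (see \Cref{lem:eigenvalues_HSBM}), being orthogonal within $\bU$ to $P_{\bU}\mathbf 1_n$ forces proximity to $\bar{\bu}_2$; thus the witnessing $\bv$ may be taken to be exactly the vector constructed in Step~4 of \Cref{alg:2_block_spectral_partition}, so the corollary's existential conclusion is realized by the algorithm's own output.

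For the probability statement I would unwind the events underlying \Cref{lem:subspace_angle_2}: that lemma rests on the regularized concentration estimate $\|\bE_{\cI}\|\le C_3\sqrt d$ (an instance of \Cref{thm:regularization} with $\tau=20\mathcal M_{\max}$, $K=3$) together with $\|\bDelta\|\le\sqrt d$ (the $k=2$ counterpart of \Cref{cor:norm_of_high_degree_vertices_k_block}, built from \Cref{lem:high_degree_vertices_k}). A union bound over these failure events, which are polynomially and exponentially small in $n$, yields the probability claimed in the corollary.

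At this level the argument is pure bookkeeping, so there is no serious obstacle; the substantive work lies upstream, in the Davis--Kahan plus regularization argument behind \Cref{lem:subspace_angle_2} (which in turn consumes the sparse non-uniform hypergraph concentration of \Cref{thm:regularization}) and in the two-dimensional geometric estimate \Cref{lem:vector_angle_2block} imported from \cite{chin2015stochastic}. The only point needing a moment's care is the constant-chasing: one must confirm that replacing the target angle $c$ by $c'=c^2/4$ inside \Cref{lem:subspace_angle_2} does not spoil the hypothesis of \Cref{thm:mainresult_2}, which it does not, precisely because the relevant constant depends only on $c$ and on the fixed quantity $\mathcal M_{\max}$.
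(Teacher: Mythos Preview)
Your proposal is correct and follows exactly the paper's approach: the corollary is stated as an immediate consequence of \Cref{lem:subspace_angle_2} and \Cref{lem:vector_angle_2block}, and your choice $c'=c^2/4$ is the natural way to thread the two. One minor caveat: the union bound you describe actually yields a polynomial failure probability (from the $n^{-K}$ term in \Cref{thm:regularization} and the $1-n^{-2}$ in \Cref{lem:subspace_angle_2}), not $O(e^{-n})$; the exponential rate in the corollary's statement appears to be a slight overstatement already present in the paper.
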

\begin{proof}
    For any $c \in (0, 1)$, we could choose constants $\const_{2}$, $\const_{3}$ in \Cref{lem:subspace_angle_binary} such that $\sin \angle(\overline{\rmU}, \rmU) \leq \frac{c^{2}}{4} < 1$. Then by \Cref{lem:vector_angle_binary}, we construct $\rvv$ such that $\sin \angle(\overline{\rvu}_{2}, \rvv) \leq c$.
\end{proof}

The proof of \Cref{lem:spectral_clustering_accuracy_binary} is then completed by choosing $\const_{2}$, $\const_{3}$ in \Cref{lem:subspace_angle_binary} such that $c \leq \frac{1}{4}$, and applying \Cref{lem:angle_accuracy_multiple} with $K = 2$.

\subsection{Proof of \Cref{lem:correction_accuracy_binary}}
The proof strategy is similar to \Cref{subsec:local_correction} and \Cref{subsec:merging}. In \Cref{alg:binary_partition_partial}, we first color the hyperedges with red and blue with equal probability. By running \Cref{alg:binary_block_correction} on the red graph, we obtain a $\nu$-correct partition $\widehat{\gV}_{1}, \widehat{\gV}_{2}$ of $\gV = \gV_1 \cup \gV_2$, i.e., $|\gV_{k}\cap \widehat{\gV}_{k}| \geq \nu N/2$ for $k = 1, 2$. We condition on this event in the rest of the proof. Meanwhile, we shall also condition the event that the maximum red degree of a vertex is at most $\log^{2}(N)$ with high probability, which is a simple consequence of concetration by Bernstein's inequality (\Cref{lem:Bernstein}).

Similarly, we consider the probability of a hyperedge $e = \{i_{1}, \ldots, i_{\ell}\}$ being blue conditioning on the event that $e$ is not a red hyperedge in each underlying $\ell$-uniform hypergraph separately. If vertices $i_{1}, \ldots, i_{\ell}$ are all from the same true cluster, then the probability is $\psi_{\ell}$, otherwise $\phi_{\ell}$, where $\psi_{\ell}$ and $\phi_{\ell}$ are defined in \eqref{eqn:psi_l} and \eqref{eqn:phi_l}, and the presence of those hyperedges are represented by random variables $\zeta_e^{(a_{\ell})} \sim \mathrm{Bernoulli}\left(\psi_{\ell}\right)$, $\xi_e^{(b_{\ell})} \sim \mathrm{Bernoulli}\left(\phi_{\ell}\right)$, respectively.

Following a similar argument in \Cref{subsec:local_correction}, the row sum of $u$ can be written as
\begin{align}
    \widehat{\rS}_{kj}^{(0)}(u)\coloneqq \sum_{\ell \in \sL}(\ell - 1)\cdot \left\{ \sum_{e\in \, \gE^{(a_{\ell})}_{k, j}} \zeta_e^{(a_{\ell})} + \sum_{e\in \gE^{(b_{\ell})}_{k, j}} \xi_e^{(b_{\ell})}\right\}\,,\quad u\in \gV_{k}\,, \notag 
\end{align}
where $\gE^{(a_{\ell})}_{k, j}\coloneqq \gE_{\ell}([\gV_{k}]^{1}, [\gV_{k}\cap \widehat{\gV}_{j}^{(0)}]^{\ell - 1})$ denotes the set of $\ell$-hyperedges with $1$ vertex from $[\gV_{k}]^{1}$ and the other $\ell - 1$ vertices from $[\gV_{k}\cap \widehat{\gV}_{j}^{(0)}]^{\ell - 1}$, while
    \begin{align*}
        \gE^{(b_{\ell})}_{k, j} \coloneqq \gE_{\ell} \Big([\gV_{k}]^{1}, \,\, [\widehat{\gV}_{j}^{(0)}]^{\ell - 1} \setminus [\gV_{k}\cap \widehat{\gV}_{j}^{(0)}]^{\ell - 1} \Big)
    \end{align*}
denotes the set of $\ell$-hyperedges with $1$ vertex in $[\gV_{k}]^{1}$ while the remaining $\ell - 1$ vertices in $[\widehat{\gV}_{j}^{(0)}]^{\ell - 1}\setminus[\gV_{k}\cap \widehat{\gV}_{j}^{(0)}]^{\ell - 1}$, with their cardinalities
    \begin{align}
       |\gE^{(a_{\ell})}_{k, j}| \leq \binom{|\gV_{k}\cap \widehat{\gV}_{j}^{(0)}|}{\ell - 1}\,,\quad |\gE^{(b_{\ell})}_{k, j}| \leq \left[ \binom{|\widehat{\gV}_{j}^{(0)}|}{\ell - 1} - \binom{|\gV_{k}\cap \widehat{\gV}_{j}^{(0)}|}{\ell - 1} \right]\,. \notag 
    \end{align}
According to the facts $|\gV_{k}\cap \widehat{\gV}_{k}^{(0)}| \geq \nu N/2$, $|\gV_{k}| = N/2$, $|\widehat{\gV}_{k}^{(0)}| = N/2$ for $k = 1, 2$, we have
\begin{align}
    |\gE^{(a_{\ell})}_{k, k}| \geq \binom{ \frac{\nu N}{2}}{\ell - 1}\,,\quad |\gE^{(a_{\ell})}_{k, j}| \leq \binom{\frac{(1 - \nu)N}{2}}{\ell - 1}\,,\,\, j\neq k\,. \notag 
\end{align}
To simplify the calculation, we take the lower and upper bound of $|\gE^{(a_{\ell})}_{k, k}|$ and $|\gE^{(a_{\ell})}_{k, j}|(j\neq k)$ respectively. Taking expectation with respect to $\zeta_e^{(a_{\ell})}$ and $\xi_e^{(b_{\ell})}$, for any $u \in \gV_{k}$, we have
\begin{subequations}
    \begin{align}
    \E \widehat{\rS}_{kk}^{(0)}(u) &= \sum_{\ell \in \sL} (\ell - 1)\cdot\left[ \binom{\frac{\nu N}{2}}{\ell - 1} (\psi_{\ell} - \phi_{\ell}) + \binom{\frac{N}{2}}{\ell - 1} \phi_{\ell}\right], \notag \\
    \E \widehat{\rS}_{kj}^{(0)}(u) &= \sum_{\ell \in \sL} (\ell - 1)\cdot\left[ \binom{\frac{(1 - \nu)N}{2}}{\ell - 1} (\psi_{\ell} - \phi_{\ell}) +  \binom{\frac{N}{2}}{\ell - 1} \phi_{\ell} \right],\,\, j\neq k\,. \notag 
\end{align}
\end{subequations}
By assumptions in \Cref{thm:partial_multiple}, $\E \widehat{\rS}_{kk}^{(0)}(u) - \E \widehat{\rS}_{kj}^{(0)}(u) = \Omega(1)$. We define
\begin{align}
    \mu_{\rm{C}} \coloneqq \frac{1}{2}\sum_{\ell \in \sL} (\ell - 1)\cdot\left\{ \left[ \binom{\frac{\nu N}{2}}{\ell - 1} + \binom{ \frac{(1 - \nu)N}{2} }{\ell - 1} \right](\psi_{\ell} - \phi_{\ell}) + 2\binom{ \frac{N}{2} }{\ell - 1} \phi_{\ell}\right\}\,. \notag 
\end{align}
After \Cref{alg:binary_block_correction}, if a vertex $u\in \gV_{k}$ is mislabelled, one of the following events must happen
\begin{itemize}
    \item $\widehat{\rS}_{kk}^{(0)}(u) \leq \mu_{\rm{C}}$, meaning that $u$ fails to have enough neighbors in $\widehat{\gV}_{k}^{(0)}$ to be correctly labeled as $k$;
    \item $\widehat{\rS}_{kj}^{(0)}(u) \geq \mu_{\rm{C}}$, for some $j\neq k$, meaing that $u$ survived \Cref{alg:binary_block_correction} without being corrected.
\end{itemize}
By an argument similar to \Cref{lem:mislabel_probability_correction}, we can prove that
\begin{align}
    \widehat{\rho}_{1}^{(0)} = \P \left( \widehat{\rS}_{kk}^{(0)}(u) \leq \mu_{\rm{C}}\right) \leq \rho\,, \quad \widehat{\rho}_{2}^{(0)} = \P \left( \widehat{\rS}_{kj}^{(0)}(u) \geq \mu_{\rm{C}}\right) \leq \rho\,, \notag 
\end{align}
where $\rho = \exp\left( - \const_{\sL}(2, \nu)\cdot \mathrm{SNR}_{\sL}(2) \right)$ and  
\begin{align}
    \const_{\sL}(2, \nu)\coloneqq\frac{[(\nu)^{\LM-1} - (1 - \nu)^{\LM-1}]^2 }{8(\LM - 1)^2}, \quad \mathrm{SNR}_{\sL}(2)\coloneqq \frac{ \left[\sum_{\ell \in \sL} (\ell - 1)\cdot (a_{\ell} - b_{\ell})2^{-\ell + 1} \right]^2 }{\sum_{\ell \in \sL} (\ell - 1)\left((a_{\ell} - b_{\ell}) 2^{-\ell + 1} + b_{\ell} \right)}\,. \notag 
\end{align}
As a result, the probability that either of those events happened is bounded by $\rho$. The number of mislabeled vertices in $\gV_1$ after \Cref{alg:multi_block_correction} is at most
\begin{align*}
    \widehat{\rR}_{k}^{(0)} = \sum_{v\in \gV_{k}\setminus \widehat{\gV}_{k}}\Gamma_{v}\, + \sum_{v \in \gV_{k} \cap \widehat{\gV}_{k}}\Lambda_{v}\,,
\end{align*}
where $\Gamma_{v}$ (resp. $\Lambda_{v}$) are i.i.d indicator random variables with mean $\widehat{\rho}_{1}^{(0)}$ (resp. $\widehat{\rho}_{2}^{(0)}$). Then 
\begin{align*}
   \E \widehat{\rR}_{k}^{(0)} \leq  \frac{N}{2} \widehat{\rho}_{1}^{(0)} + \frac{(1 - \nu)N}{2} \widehat{\rho}_{2}^{(0)} =(1 - \nu/2)N\rho\,. 
\end{align*}
where $\nu$ is the correctness after \Cref{alg:multi_block_spectral_partition}. Let $t_{k}\coloneqq (1 + \nu/2)N\rho$, then by Chernoff \Cref{lem:Chernoff},
\begin{align*}
   \P \left( \widehat{\rR}_{k}^{(0)} \geq N\rho \right) = \P \left[ \widehat{\rR}_{k}^{(0)} - (1 - \nu/2)N\rho \geq t_{k} \right] \leq \P \left( \widehat{\rR}_{k}^{(0)} - \E \widehat{\rR}_{k}^{(0)} \geq t_{k} \right) \leq e^{-c t_{k}} = O(e^{-N\rho})\,,
\end{align*}
which means that with probability $1- O(e^{- N\rho})$, the fraction of mislabeled vertices in $\gV_{k}$ is smaller than $2\rho$, i.e., the correctness is at least $\gamma \coloneqq \max\{\nu,\, 1 - 2\rho\}$. 

\section{Technical Lemmas}

\begin{lemma}\label{lem:binomial_convex}
    The function $f: \N^{+} \mapsto \N^{+}$, defined by $f(N) = \binom{N}{\ell}$, is convex for any $\ell \geq 2$. In particular, if $N_{1} + N_{2} + \cdots + N_K = N$ for some $K\in \N^{+}$, then $f(N_{1}) + f(N_{2}) + \cdots + f(N_k) \leq f(N)$.
\end{lemma}
\begin{proof}
    By induction, it suffices to check when $K=2$, which can be verified by direction calculation $\binom{N_{1}}{\ell}+\binom{N_{2}}{\ell}\leq \binom{N_{1}+N_{2}}{\ell}$.
\end{proof}

\begin{lemma}[Markov's inequality, {\cite[Proposition $1.2.4$]{Vershynin2018HighDP}}]\label{lem:Markov}
    For any non-negative random variable $\rX$ and $t>0$, we have 
    \begin{align*}
        \P(\rX > t) \leq \E(\rX)/t.
    \end{align*}
\end{lemma}

\begin{lemma}[Hoeffding's inequality, {\cite[Theorem $2.2.6$]{Vershynin2018HighDP}}]\label{lem:Hoeffding}
    Let $\rX_1,\dots, \rX_{N}$ be independent  random variables with $\rX_i \in [a_i, b_i]$, then for any, $t \geq 0$, we have
    \begin{align*}
        \P\Bigg( \bigg|\sum_{i=1}^{N}(\rX_i - \E \rX_i ) \bigg| \geq t \Bigg) \leq 2\exp \Bigg( -\frac{2t^{2} }{ \sum_{i=1}^{N}(b_i - a_i )^{2} } \Bigg)\,.
    \end{align*}
\end{lemma}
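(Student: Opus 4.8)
The plan is to follow the classical Chernoff-bound route, proving the bound from scratch via moment generating functions rather than reducing to the Bernoulli case. Write $S := \sum_{i=1}^N (X_i - \E X_i)$; I will first bound the upper tail $\P(S \geq t)$, then bound the lower tail $\P(-S \geq t)$ by the identical argument, and combine the two via a union bound to produce the factor $2$. For the upper tail, fix $\lambda > 0$ and apply Markov's inequality to the nonnegative random variable $e^{\lambda S}$:
\[
\P(S \geq t) = \P\big(e^{\lambda S} \geq e^{\lambda t}\big) \leq e^{-\lambda t}\, \E e^{\lambda S} = e^{-\lambda t} \prod_{i=1}^N \E e^{\lambda(X_i - \E X_i)},
\]
where the last equality uses the independence of the $X_i$.

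The heart of the argument is Hoeffding's lemma: if $Y$ is a random variable with $\E Y = 0$ and $Y \in [a,b]$ almost surely, then $\E e^{\lambda Y} \leq \exp\big(\lambda^2 (b-a)^2/8\big)$ for every $\lambda \in \R$. I would prove this by setting $\psi(\lambda) := \log \E e^{\lambda Y}$ and checking that $\psi(0) = 0$, $\psi'(0) = \E Y = 0$, and $\psi''(\lambda) \leq (b-a)^2/4$ uniformly in $\lambda$. The bound on $\psi''$ follows from recognizing $\psi''(\lambda)$ as the variance of $Y$ under the tilted probability measure with density proportional to $e^{\lambda Y}$; since that measure is supported in $[a,b]$, its variance is at most $(b-a)^2/4$ by Popoviciu's inequality. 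A second-order Taylor expansion then gives $\psi(\lambda) \leq \lambda^2 (b-a)^2/8$. Applying this with $Y = X_i - \E X_i$, which lies in the interval $[a_i - \E X_i,\ b_i - \E X_i]$ of length $b_i - a_i$, yields
\[
\P(S \geq t) \leq \exp\Big(-\lambda t + \tfrac{\lambda^2}{8}\textstyle\sum_{i=1}^N (b_i - a_i)^2\Big).
\]

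Finally I would optimize over $\lambda > 0$: the exponent is minimized at $\lambda^{\star} = 4t\big/\sum_i (b_i-a_i)^2$, which gives $\P(S \geq t) \leq \exp\big(-2t^2\big/\sum_i (b_i-a_i)^2\big)$. The same computation applied to $-S = \sum_i(\E X_i - X_i)$, whose summands lie in intervals of the same lengths, bounds $\P(-S \geq t)$, and a union bound over $\{S \geq t\}$ and $\{-S \geq t\}$ produces the stated inequality with the constant $2$. The only genuinely delicate point is Hoeffding's lemma itself; the rest is bookkeeping. If one prefers to avoid the change of measure, an equivalent route to the lemma is to use convexity of $y \mapsto e^{\lambda y}$ to write $e^{\lambda y} \leq \tfrac{b-y}{b-a}e^{\lambda a} + \tfrac{y-a}{b-a}e^{\lambda b}$ on $[a,b]$, take expectations (using $\E Y = 0$), and then bound the resulting single-variable function of $\lambda$ by $e^{\lambda^2(b-a)^2/8}$ directly.
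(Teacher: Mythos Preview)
Your proof is correct and is the standard Chernoff--Hoeffding argument. The paper does not supply its own proof of this lemma; it simply cites Theorem~2.2.6 in \cite{vershynin_2018}, where the same moment-generating-function route you describe (Markov's inequality, Hoeffding's lemma via the tilted variance bound, then optimization over $\lambda$) is carried out.
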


\begin{lemma}[Chernoff's inequality, {\cite[Theorem $2.3.1$]{Vershynin2018HighDP}}]\label{lem:Chernoff}
    Let $\rX_i$ be independent Bernoulli random variables with parameters $p_i$. Consider their sum $\rS_{N} = \sum_{i=1}^{N}\rX_i$ and denote its mean by $\mu = \E \rS_{N}$. Then for any $t > \mu$, 
    \begin{align*}
        \P \big( \rS_{N} \geq t \big) \leq e^{-\mu} \left( \frac{e \mu}{t} \right)^{t}\,.
    \end{align*}
\end{lemma}

\begin{lemma}[Bernstein's inequality, {\cite[Theorem $2.8.4$]{Vershynin2018HighDP}}]\label{lem:Bernstein}
    Let $\rX_1,\dots, \rX_{N}$ be independent mean-zero random variables such that $|\rX_i|\leq K$ for all $i$. Let $\sigma^2 = \sum_{i=1}^{N}\E \rX_i^2$. Then for every $t \geq 0$,
    \begin{align*}
        \P \Bigg( \Big|\sum_{i=1}^{N} \rX_i \Big| \geq t \Bigg) \leq 2 \exp \Bigg( - \frac{t^2/2}{\sigma^2 + Kt/3} \Bigg)\,.
    \end{align*}
\end{lemma}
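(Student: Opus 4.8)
The plan is to prove this by the exponential moment (Chernoff) method, which is the standard route to sub-exponential tail bounds. Set $S_N=\sum_{i=1}^N X_i$. For any parameter $\lambda>0$, Markov's inequality applied to the nonnegative random variable $e^{\lambda S_N}$ gives $\P(S_N\geq t)\leq e^{-\lambda t}\,\E e^{\lambda S_N}$, and by independence $\E e^{\lambda S_N}=\prod_{i=1}^N\E e^{\lambda X_i}$. So the whole argument reduces to controlling the moment generating function $\E e^{\lambda X_i}$ of a single bounded, centered summand and then optimizing over $\lambda$.

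For the single-summand bound I would expand $e^{\lambda X_i}=1+\lambda X_i+\sum_{q\geq 2}\lambda^q X_i^q/q!$, take expectations so the linear term drops out (since $\E X_i=0$), and use the crude moment estimate $|\E X_i^q|\leq K^{q-2}\,\E X_i^2$ for $q\geq 2$, which follows from $|X_i|\leq K$. Resumming the resulting series yields $\E e^{\lambda X_i}\leq 1+\E X_i^2\cdot\frac{e^{\lambda K}-1-\lambda K}{K^2}$, and then $1+x\leq e^x$ turns the product into $\prod_i\E e^{\lambda X_i}\leq\exp\!\big(\sigma^2(e^{\lambda K}-1-\lambda K)/K^2\big)$ with $\sigma^2=\sum_i\E X_i^2$. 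Combined with the first step this already gives Bennett's bound $\P(S_N\geq t)\leq\exp\!\big(-\lambda t+\sigma^2(e^{\lambda K}-1-\lambda K)/K^2\big)$ for all $\lambda>0$.

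To reach the precise Bernstein form I would then invoke the elementary scalar inequality $\frac{e^u-1-u}{u^2}\leq\frac{1/2}{1-u/3}$ valid for $0\leq u<3$, applied with $u=\lambda K$, giving $\P(S_N\geq t)\leq\exp\!\big(-\lambda t+\frac{\lambda^2\sigma^2/2}{1-\lambda K/3}\big)$ for $0<\lambda<3/K$. Choosing $\lambda=t/(\sigma^2+Kt/3)$, which lies in $(0,3/K)$ whenever $\sigma^2>0$ (the case $\sigma^2=0$ being trivial), and simplifying produces exactly $\P(S_N\geq t)\leq\exp\!\big(-\tfrac{t^2/2}{\sigma^2+Kt/3}\big)$. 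Finally, applying this verbatim to $-S_N=\sum_i(-X_i)$, whose summands are also centered and bounded by $K$, and taking a union bound over the two one-sided events gives the two-sided statement with the factor $2$.

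I do not expect a genuine obstacle, since the content is classical; the only non-mechanical points are the single-summand MGF bound — specifically justifying $|\E X_i^q|\leq K^{q-2}\E X_i^2$ together with the termwise resummation — and verifying the scalar inequality $\frac{e^u-1-u}{u^2}\leq\frac{1/2}{1-u/3}$ on $[0,3)$, which is what fixes the constants $1/2$ and $1/3$ appearing in the statement. Everything after that is the routine Chernoff optimization, with care only to keep $\lambda K<3$ so that the intermediate bounds remain valid.
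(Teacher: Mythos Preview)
Your proof is correct and follows the standard exponential-moment route to Bernstein's inequality. The paper itself does not supply a proof of this lemma; it is stated in the appendix of auxiliary results and attributed to \cite[Theorem~2.8.4]{vershynin_2018}, whose proof proceeds by exactly the method you outline (MGF bound for each summand via the moment estimate $|\E X_i^q|\leq K^{q-2}\E X_i^2$, then Chernoff optimization).
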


\begin{lemma}[Bennett's inequality, {\cite[Theorem $2.9.2$]{Vershynin2018HighDP} }]\label{lem:Bennett}
    Let $\rX_1,\dots, \rX_{N}$ be independent random variables. Assume that $|\rX_i - \E \rX_i| \leq K$ almost surely for every $i$. Then for any $t>0$, we have
    \begin{align*}
        \P \Bigg( \sum_{i=1}^{N} (\rX_i - \E \rX_i) \geq t \Bigg) \leq \exp \Bigg( - \frac{\sigma^2}{K^2} \cdot h \bigg( \frac{Kt}{\sigma^2} \bigg)\Bigg)\,, \notag 
    \end{align*}
    where $\sigma^2 = \sum_{i=1}^{N}\Var(\rX_i)$ is the variance of the sum and $h(u) := (1 + u)\log(1 + u) - u$.
\end{lemma}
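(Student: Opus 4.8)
The plan is to prove Bennett's inequality by the classical exponential-moment (Cram\'er--Chernoff) method. First I would reduce to the mean-zero case by setting $Y_i := X_i - \E X_i$: then $|Y_i| \le K$ almost surely, $\E Y_i = 0$, $\E Y_i^2 = \textnormal{Var}(X_i) =: \sigma_i^2$, and $S := \sum_{i=1}^N (X_i - \E X_i) = \sum_{i=1}^N Y_i$. For any $\lambda > 0$, Markov's inequality applied to $e^{\lambda S}$ together with the independence of the $Y_i$ gives
\[
    \P\bigl(S \ge t\bigr) \;\le\; e^{-\lambda t}\,\E e^{\lambda S} \;=\; e^{-\lambda t}\prod_{i=1}^N \E e^{\lambda Y_i}.
\]

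The core of the argument is a pointwise bound on each factor $\E e^{\lambda Y_i}$. Let $g(y) := (e^y - 1 - y)/y^2$ for $y \ne 0$, extended continuously by $g(0) := 1/2$; an elementary computation (examining $g'(y) = y^{-3}[(y-2)e^y + 2y + 2]$, whose numerator vanishes at $0$ and has second derivative $y e^y$, hence the sign of $y$) shows that $g$ is nondecreasing on $\R$. Writing $e^{\lambda Y_i} = 1 + \lambda Y_i + (\lambda Y_i)^2\, g(\lambda Y_i)$ and using $\lambda Y_i \le \lambda K$ with the monotonicity of $g$ gives, pointwise, $e^{\lambda Y_i} \le 1 + \lambda Y_i + \lambda^2 Y_i^2\, g(\lambda K)$. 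Taking expectations, using $\E Y_i = 0$ and $1 + x \le e^x$, yields $\E e^{\lambda Y_i} \le \exp\bigl(\lambda^2 \sigma_i^2\, g(\lambda K)\bigr)$. Multiplying over $i$ and using $\lambda^2 g(\lambda K) = K^{-2}\bigl(e^{\lambda K} - 1 - \lambda K\bigr)$, we obtain, with $\sigma^2 := \sum_{i=1}^N \sigma_i^2$,
\[
    \P\bigl(S \ge t\bigr) \;\le\; \exp\!\left(-\lambda t + \frac{\sigma^2}{K^2}\bigl(e^{\lambda K} - 1 - \lambda K\bigr)\right).
\]

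Finally I would optimize over $\lambda > 0$. The exponent $F(\lambda) := -\lambda t + \tfrac{\sigma^2}{K^2}(e^{\lambda K} - 1 - \lambda K)$ is convex in $\lambda$, and $F'(\lambda) = 0$ at $\lambda^\ast = K^{-1}\log\bigl(1 + Kt/\sigma^2\bigr)$, which is positive since $t > 0$. Substituting $\lambda^\ast$ and abbreviating $u := Kt/\sigma^2$ (so that $e^{\lambda^\ast K} = 1+u$ and $t = u\sigma^2/K$), the two terms combine to
\[
    F(\lambda^\ast) \;=\; \frac{\sigma^2}{K^2}\bigl[u - (1+u)\log(1+u)\bigr] \;=\; -\frac{\sigma^2}{K^2}\,h(u),
\]
which is precisely the claimed bound. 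The only mildly technical point is establishing the monotonicity of $g$ — and observing that, since the $Y_i$ may take negative values, it is genuine monotonicity (not merely an even estimate) that licenses the passage from $g(\lambda Y_i)$ to $g(\lambda K)$; beyond this the computation is entirely routine, and I do not anticipate any real obstacle.
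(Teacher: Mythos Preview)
Your argument is correct: this is the standard Cram\'er--Chernoff proof of Bennett's inequality (there is a minor arithmetic slip in your expression for the numerator of $g'(y)$ --- it should be $(y-2)e^y + y + 2$ rather than $(y-2)e^y + 2y + 2$ --- but with the corrected numerator one indeed has $\phi(0)=\phi'(0)=0$ and $\phi''(y)=ye^y$, and your sign analysis goes through verbatim). The paper does not supply its own proof of this lemma; it is quoted as Theorem~2.9.2 of Vershynin's book, where essentially the same exponential-moment argument appears.
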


\begin{lemma}[Weyl's inequality, \cite{Weyl1912DasAV}]\label{lem:weyl}
Let $\rmA, \rmE \in \R^{m \times n}$ be two real $m\times n$ matrices, then $|\sigma_i(\rmA + \rmE) - \sigma_i(\rmA)| \leq \|\rmE\|$ for every $1 \leq i \leq \min\{ m, n\}$. Furthermore, if $m = n$ and $\rmA, \rmE \in \R^{n \times n}$ are real symmetric, then $|\lambda_i(\rmA + \rmE) - \lambda_i(\rmA)| \leq \|\rmE\|$ for all $1 \leq i \leq n$.
\end{lemma}

\begin{lemma}[Davis-Kahan's $\sin \Theta$ Theorem, {\cite[Theorem 2.7]{Chen2021SpectralMF}, \cite{Davis1970TheRO, Yu2015UsefulVO}} ]\label{lem:Davis_Kahan_sin}
Let $\overline{\rmM}$ and $\rmM = \overline{\rmM} + \rmE$ be two real symmetric $N \times N$ matrices, with eigenvalue decompositions given respectively by
\begin{subequations}
    \begin{align*}
        \overline{\rmM} =&\, \sum\limits_{i=1}^{N}\overline{\lambda}_i \overline{\rvu}_{i} {\overline{\rvu}_{i}}^{\sT} = 
            \begin{bmatrix}
                \overline{\rmU} & \overline{\rmU}_{\perp}
            \end{bmatrix}
            \begin{bmatrix}
                \overline{\Lambda} & \bzero\\
                \bzero & \overline{\Lambda}_{\perp}
            \end{bmatrix}
            \begin{bmatrix}
                {\overline{\rmU}}^{\sT}\\
                {\overline{\rmU}}^{\sT}_{\perp}
            \end{bmatrix}\,,\\
            \rmM =&\, \sum\limits_{i=1}^{N}\lambda_i \rvu_i \rvu_i^{\sT} = 
            \begin{bmatrix}
                \rmU & \rmU_{\perp}
            \end{bmatrix}
            \begin{bmatrix}
                \Lambda & \bzero\\
                \bzero & \Lambda_{\perp}
            \end{bmatrix}
            \begin{bmatrix}
               \rmU^{\sT}\\
                \rmU_{\perp}^{\sT}
            \end{bmatrix}\,.
        \end{align*}
    \end{subequations}
    Here, $\{ \overline{\lambda}_{i}\}_{i=1}^{N}$(resp. $\{\lambda_{i}\}_{i=1}^{N}$) stand for the eigenvalues of $\overline{\rmM}$(resp. $\rmM$), and $\overline{\rvu}_i$(resp. $\rvu_i$) denotes the eigenvector associated $\overline{\lambda}_{i}$(resp. $\lambda_i$). Additionally, for some fixed integer $r\in[N]$, we denote
    \begin{align}
        \overline{\Lambda}\coloneqq \mathrm{diag}\{\overline{\lambda}_1, \dots, \overline{\lambda}_r\}, \quad \overline{\Lambda}_{\perp}\coloneqq&\, \mathrm{diag}\{\overline{\lambda}_{r+1}, \dots, \overline{\lambda}_{N}\}, \notag \\
        \overline{\rmU}\coloneqq  [\overline{\rvu}_1, \dots, \overline{\rvu}_r] \in \R^{N \times r}, \quad \overline{\rmU}_{\perp}\coloneqq&\, [\overline{\rvu}_{r+1}, \dots, \overline{\rvu}_{N}] \in \R^{N \times (N - r)}. \notag 
    \end{align}
    The matrices $\Lambda$, $\Lambda_{\perp}$, $\rmU$, $\rmU_{\perp}$ are defined analogously. Assume that
    \begin{align*}
        \mathrm{eigenvalues}(\overline{\Lambda}) \subseteq [\alpha, \beta]\,,\quad \mathrm{eigenvalues}(\Lambda_{\perp}) \subseteq (-\infty, \alpha - \Delta] \cup [\beta + \Delta, \infty),\quad \alpha, \beta \in \R\,, \Delta >0\,,
    \end{align*}
    and the projection matrices are given by $\rmP_{\rmU} \coloneqq \rmU\rmU^{\sT}$, $\rmP_{\overline{\rmU}} \coloneqq \overline{\rmU} \,\overline{\rmU}^{\sT}$, then one has $\|\rmP_{\rmU} - \rmP_{\overline{\rmU}}\| \leq (2\|\rmE\|/\Delta)$. In particular, suppose that $|\overline{\lambda}_1| \geq |\overline{\lambda}_2| \geq \cdots \geq |\overline{\lambda}_r| \geq |\overline{\lambda}_{r+1}| \geq \cdots |\overline{\lambda}_{N}|$ (resp. $|\lambda_1| \geq \cdots \geq |\lambda_{N}|$). If $\|\rmE\|\leq (1 - 1/\sqrt{2})(|\overline{\lambda}|_{r} - |\overline{\lambda}|_{r+1})$, then one has
    \begin{equation*}
         \|\rmP_{\rmU} - \rmP_{\overline{\rmU}}\| \leq \frac{2\|\rmE\| }{ |\overline{\lambda}_{r}| - |\overline{\lambda}_{r+1}|} \,.
    \end{equation*}
\end{lemma}

\begin{lemma}[Wedin's $\sin \Theta$ Theorem, {\cite[Theorem 2.9]{Chen2021SpectralMF}, \cite{Wedin1972PerturbationBO, Yu2015UsefulVO}}]\label{lem:Wedin_sin}
    Let $\overline{\rmM}$ and $\rmM = \overline{\rmM} + \rmE$ be two $N_{1} \times N_{2}$ real matrices and $N_{1}\geq N_{2}$, with \emph{SVD}s given respectively by 
    \begin{subequations}
    \begin{align}
        \overline{\rmM} =&\, \sum\limits_{i=1}^{N_{1}}\overline{\sigma}_i \overline{\rvu}_{i} {\overline{\rvv}_{i}}^{\sT} = 
            \begin{bmatrix}
                \overline{\rmU} & \overline{\rmU}_{\perp}
            \end{bmatrix}
            \begin{bmatrix}
                \overline{\Sigma} & \bzero & \bzero\\
                \bzero & \overline{\Sigma}_{\perp} & \bzero
            \end{bmatrix}
            \begin{bmatrix}
                {\overline{\rmV}}^{\sT}\\
                \overline{\rmV}^{\sT}_{\perp}
            \end{bmatrix} \notag \\
            \rmM =&\, \sum\limits_{i=1}^{N_{1}}\sigma_i \rvu_i \rvv_i^\sT = 
            \begin{bmatrix}
                \rmU & \rmU_{\perp}
            \end{bmatrix}
            \begin{bmatrix}
                \Sigma & \bzero & \bzero\\
                \bzero & \Sigma_{\perp} & \bzero
            \end{bmatrix}
            \begin{bmatrix}
               \rmV^{\sT}\\
                \rmV_{\perp}^{\sT}
            \end{bmatrix} .\notag
        \end{align}
    \end{subequations}
    Here, $\overline{\sigma}_1 \geq \dots \geq \overline{\sigma}_{N_{1}}$ (resp. $\sigma_1 \geq \dots \geq \sigma_{N_{1}}$) stand for the singular values of $\overline{\rmM}$(resp. $\rmM$), $\overline{\rvu}_i$(resp. $\rvu_i$) denotes the left singular vector associated with the singular value $\overline{\sigma}_i$(resp. $\sigma_i$), and $\overline{\rvv}_i$(resp. $\rvv_i$) denotes the right singular vector associated with the singular value $\overline{\sigma}_i$(resp. $\sigma_i$). In addition, for any fixed integer $r\in[N_{1}]$, we denote 
    \begin{align*}
        \Sigma\coloneqq \mathrm{diag}\{\sigma_1, \dots, \sigma_r\}, \quad \Sigma_{\perp}\coloneqq&\, \mathrm{diag}\{\sigma_{r+1}, \dots, \sigma_{N_{1}}\},\\
        \rmU\coloneqq [\rvu_1, \dots, \rvu_r] \in \R^{N_{1} \times r}, \quad \rmU_{\perp}\coloneqq&\, [\rvu_{r+1}, \dots, \rvu_{N_{1}}] \in \R^{N_{1} \times (N_{1} - r)},\\
        \rmV\coloneqq [\rvv_1, \dots, \rvv_r] \in \R^{N_{2} \times r}, \quad \rmV_{\perp}\coloneqq&\, [\rvv_{r+1}, \dots, \rvv_{N_{2}}] \in \R^{N_{2} \times (N_{2} - r)}.
    \end{align*}
    The matrices $\overline{\Sigma}$, $\overline{\Sigma}_{\perp}$, $\overline{\rmU}$, $\overline{\rmU}_{\perp}$, $\overline{\rmV}$, $\overline{\rmV}_{\perp}$ are defined analogously. If $\rmE = \rmM - \overline{\rmM}$ satisfies $\|\rmE\|\leq \overline{\sigma}_r - \overline{\sigma}_{r+1}$, then with the projection matrices $\rmP_{\rmU} \coloneqq \rmU\rmU^{\sT}$, one has
    \begin{equation*}
        \max \left\{\|\rmP_{\rmU} - \rmP_{\overline{\rmU}}\|, \|\rmP_{\rmV} - \rmP_{\overline{\rmV}}\| \right\} \leq \frac{\sqrt{2} \max \left\{\|\rmE^\sT \overline{\rmU}\|, \|\rmE \overline{\rmV}\| \right\} }{ \overline{\sigma}_r - \overline{\sigma}_{r+1} - \|\rmE\|} \,.
    \end{equation*}
    In particular, if $\|\rmE\|\leq (1 - 1/\sqrt{2})(\overline{\sigma}_{r} - \overline{\sigma}_{r+1})$, then one has
    \begin{equation*}
        \max \left\{\|\rmP_{\rmU} - \rmP_{\overline{\rmU}}\|, \|\rmP_{\rmV} - \rmP_{\overline{\rmV}}\| \right\} \leq \frac{\sqrt{2} \|\rmE\| }{ \overline{\sigma}_{r} - \overline{\sigma}_{r+1}} \,.
    \end{equation*}
\end{lemma}
\end{document}